\def\RT{\mathrm{RT}}
\def\N{\mathbb N}
\def\A{\mathcal A}
\def\B{\mathcal B}
\def\uu{\mathbf{u}}
\newtheorem{thm}{Theorem}
\newtheorem{theorem}[thm]{Theorem}
\newtheorem{corollary}[thm]{Corollary}
\newtheorem{lemma}[thm]{Lemma}
\newtheorem{proposition}[thm]{Proposition}
\newtheorem{defi}[thm]{Definition}
\crefname{thm}{theorem}{theorems}
\crefname{theorem}{theorem}{theorems}
\crefname{coro}{corollary}{corollaries}
\crefname{example}{example}{examples}
\crefname{lemma}{lemma}{lemmas}
\crefname{claim}{claim}{claims}
\crefname{obs}{observation}{observations}
\crefname{proposition}{proposition}{propositions}
\crefname{prop}{proposition}{propositions}
\crefname{defi}{definition}{definitions}
\crefname{rem}{remark}{remarks}
\newtheorem{remark}[thm]{Remark}
\newtheorem{example}[thm]{Example}
\begin{document}

\begin{frontmatter}

\title{The asymptotic repetition threshold of sequences rich in palindromes}

\author[fn]{Ľubom\'ira Dvo\v r\'akov\'a\corref{ss}}\ead{lubomira.dvorakova@fjfi.cvut.cz}

\author[fi]{Karel  Klouda}

\author[fn]{Edita  Pelantov\'a}

\affiliation[fn]{organization={Department of Mathematics, FNSPE, Czech Technical University in Prague},
            addressline={Trojanova 13}, 
            city={Prague},
            postcode={12000}, 
            country={Czech Republic}}

\affiliation[fi]{organization={Department of Applied Mathematics, FIT, Czech Technical University in Prague},
            addressline={Thákurova 9}, 
            city={Prague},
            postcode={16000}, 
            country={Czech Republic}}

\cortext[ss]{corresponding author}


\begin{abstract} 
The asymptotic critical exponent measures for a sequence the maximum repetition rate of factors of growing length. 
The infimum of asymptotic critical exponents of sequences of a certain class is called the asymptotic repetition threshold of that class.
On the one hand, if we consider the class of all $d$-ary sequences with $d\geq 2$, then the asymptotic repetition threshold is equal to one, independently of the alphabet size.
On the other hand, for the class of episturmian sequences, the repetition threshold depends on the alphabet size. 
We focus on rich sequences, i.e., sequences whose factors contain the maximum possible number of distinct palindromes. 
The class of episturmian sequences forms a subclass of rich sequences.
We prove that the asymptotic repetition threshold for the class of rich recurrent $d$-ary sequences, with $d\geq 2$, is equal to two, independently of the alphabet size.

\end{abstract}


\begin{keyword}
repetition threshold \sep asymptotic critical exponent \sep sequences rich in palindromes \sep bispecial factors 
\MSC 68R15
\end{keyword}

\end{frontmatter}




\section{Introduction}\label{sec:Introduction}

In this paper, we aim to provide a survey on the repetition threshold and the asymptotic repetition threshold of several classes of sequences (infinite words) and prove a new result on the asymptotic repetition threshold of rich sequences. 

Let us recall first the essential notions related to repetitions. For a positive integer $n$, the $n$-th power of a non-empty word $u$, denoted $u^n$, is a concatenation of $n$ copies of $u$.
In other words, $u^n$ denotes the prefix of length $|u|n$ of the infinite periodic sequence $uuu\cdots = u^\omega$, where $|u|$ is the length of $u$. 
As an example, consider a square $bonbon=(bon)^2$ and a cube $ninini$\footnote{Ninini rodiče in Czech means Nina's parents.} $=(ni)^3$.
Besides integer powers we can define also rational powers: any prefix of length $k$ of $u^\omega$ can be written as $u^e$, where $e=\frac{k}{|u|}$. 
For instance, a Czech sentence  $stel{\_}postel{\_}poste$\footnote{Translation: Stel postel posté! = Make your bed for a hundredth time!}  can be written in this formalism  as  $(stel{\_}po)^{17/7}$.  

\medskip

The {\em critical exponent}  $E(\uu )$ of a sequence $\uu=u_0 u_1 u_2 \cdots$ is defined as
$$E(\uu) =\sup\{e \in \mathbb{Q}: \  u ^e \  \text{is a factor of   } \uu  \  \text{for a non-empty word} \  u\}\,.
$$
In this paper we concentrate on  the {\em asymptotic critical exponent} $E^*(\uu)$, introduced by Cassaigne~\citep{Ca2008} under the name asymptotic index, which 
 is defined to be $+\infty$,  if $E(\uu) = +\infty$, otherwise 
 $$E^*(\uu) =\lim_{n\to \infty}\sup\{e \in \mathbb{Q}: \  u ^e \  \text{is a factor of  } \uu  \  \text{for some }  u \ \text{of length} \geq  n  \}\,.$$
It is readily seen that $E^*(\uu)\leq E(\uu)$.
\medskip

An important parameter of any class $C$ of sequences is the infimum of critical exponents, resp. of asymptotic critical exponents, of sequences from this class.
We call this parameter {\em repetition threshold} for $C$ 
$$\mathrm{RT}(C) = \inf\{E(\uu): \uu \ \text{\ belongs to the class\ } C\}\,,$$
resp. {\em asymptotic repetition threshold} for $C$
$$\mathrm{RT}^*(C) = \inf\{E^*(\uu): \uu \ \text{\ belongs to the class\ } C\}\,.$$
Obviously, $\mathrm{RT}^*(C)\leq \mathrm{RT}(C)$.

For the class of all $d$-ary sequences the repetition threshold and the asymptotic repetition threshold is usually denoted $\mathrm{RT}(d)$ and $\mathrm{RT}^*(d)$, respectively. Each binary sequence contains squares and already Axel Thue showed in 1912~\citep{Thue1912} that the Thue-Morse sequence, the fixed point of the morphism $0 \mapsto 01$ and $ 1\mapsto 10$, has the critical exponent equal to two. Hence, $\mathrm{RT}(2) = 2$ and the threshold is reached for the Thue-Morse sequence. Dejean \citep{Dej72} showed that $\mathrm{RT}(3)=7/4$ and conjectured the remaining values $\mathrm{RT}(4)=7/5$ (proved by Pansiot \citep{Pan84c}) and $\RT(d)=1+\frac{1}{d-1}$ for $d\geq 5$ (proved by efforts of many authors \citep{Mou92,Car07,CuRa11,Rao11}). 
In contrast to the repetition threshold, for the asymptotic repetition threshold we have $\RT^*(d) =1 \ \text{for all $d\geq 2$}\,,$ see~\citep{Ca2008}.

Let us briefly survey what is known about $\RT(C)$ for different classes $C$.
The most studied aperiodic binary sequences are Sturmian sequences~\citep{MiPi1992, MoCu07}. The most prominent example is the Fibonacci sequence, which is the fixed point of the morphism $0 \mapsto 01$ and $1\mapsto  0$.
The Fibonacci sequence holds a privileged position among all Sturmian sequences, see~\citep{Ca2008}. In particular, in the class of Sturmian sequences, the Fibonacci sequence has the minimal value of both the critical and the asymptotic critical exponent. As shown by Carpi and de Luca~\citep{CaDeLu00} for the repetition threshold and Vandeth~\citep{Van2000} for the asymptotic version, in the class of Sturmian sequences $C^{(s)}$, it holds $\RT(C^{(s)})=\RT^*(C^{(s)})=2+\frac{1+\sqrt{5}}{2}$. 

Sturmian sequences are episturmian, balanced and rich. The (asymptotic) repetition threshold of episturmian and balanced sequences has been recently studied.  
For the class $C^{(e)}_d$ of $d$-ary episturmian sequences with $d\geq 2$, we have shown~\cite{DvPe2024} that 
\begin{equation}\label{eq:ARsequence}\RT(C^{(e)}_d)= \RT^*(C^{(e)}_d) =  2+\frac{1}{t_d-1}, 
\end{equation}
where $t_d>1$ is the only positive root of the polynomial $x^d-x^{d-1}-\dots -x-1$.

Rampersad, Shallit and Vandomme~\citep{RSV19} suggested to study the repetition threshold for the class $C^{(b)}_d$ of $d$-ary balanced sequences. The following results have been proved so far:
\begin{itemize}
    \item $\RT(C^{(b)}_2)=2+\frac{1+\sqrt{5}}{2}$ \citep{CaDeLu00};
    \item $\RT(C^{(b)}_3)=2+\frac{\sqrt{2}}{2}$ and $\RT(C^{(b)}_4)=1+\frac{1+\sqrt{5}}{4}$ \citep{RSV19};
 \item $\RT(C^{(b)}_d)=1+\frac{1}{d-3}$ for $5 \leq d\leq 10$ \citep{BaSh19, DolceDP2023};  
    \item $\RT(C^{(b)}_d)=1+\frac{1}{d-2}$ for $d=11$ and all even numbers $d \geq 12$ \citep{DvOpPeSh2022}.
    \end{itemize}
It remains an open problem to prove the conjecture $\RT(C^{(b)}_d)=1+\frac{1}{d-2}$ also for all odd numbers $d\geq 13$.

For the asymptotic repetition threshold in this class, the situation is more interesting than for general $d$-ary sequences, see~\citep{DOP2022} and \citep{DvPe2022}:   
\begin{itemize}
    \item $\RT(C^{(b)}_d)= \RT^*(C^{(b)}_d)$ for $2\leq d \leq 5$;
 \item $\RT(C^{(b)}_d)> \RT^*(C^{(b)}_d)$ for $d \geq 6$; 
    \item $1+ \frac{1}{2^{d-2}}< \RT^*(C^{(b)}_d)< 1+ \frac{\tau^3}{2^{d-3}}$, where $\tau = \frac{{1}+\sqrt{5}}{2}$;
    \item the precise value of $\RT^*(C^{(b)}_d)$ is known only for $d \leq 10$. 
\end{itemize}

Here, we are interested in the class of rich sequences. Their introduction was initiated by observation of Justin et al.~\cite{DrJuPi2001} that the number of distinct palindromic factors (including the empty word) contained in a word of length $n$ is smaller than or equal to $n+1$. 

If a word contains the maximum number of palindromes, it is called {\em rich}.
We keep here the terminology introduced by Glen et al. in 2009~\cite{GlJuWi2009}.
The number of rich words of length $n$ over an alphabet of size $d$ is not known. Upper and lower bounds may be found in~\cite{Guo2016, Rukavicka2017}. 
A sequence is called {\em rich} if all its factors are rich. Examples of rich sequences include episturmian sequences, complementary-symmetric Rote sequences \citep{BBLV2011}, sequences coding symmetric $d$-interval exchange \citep{BaMaPe2007}, the period doubling sequence etc.
Rich sequences may be characterized by various ways:  using 
either  complete return words~\cite{GlJuWi2009} or using a relation between factor and palindromic complexity~\cite{BuLuGlZa2009} or  the extensions of bispecial factors~\cite{BaPeSt2010}. 

The repetition threshold of rich sequences has been so far studied only for binary and ternary sequences. The value of the repetition threshold for the binary alphabet was conjectured in \citep{BaSh19}, and proved by Currie, Mol and Rampersad \citep{CuMoRa2020}.
If $C^{(r)}_2$ is the set of all binary rich sequences, then 
$$ \RT(C^{(r)}_2) =2+\frac{\sqrt{2}}{2} \approx 2.707\,.$$
Moreover, Baranwal and Shallit~\citep{BaSh19} found by backtracking a lower bound $\RT(C^{(r)}_3)\geq  9/4$ for the class $C^{(r)}_3$ of ternary rich sequences. Recently, Currie, Mol and Peltom\"{a}ki\footnote{Private communication from July 2024.} have shown that 
$\RT(C^{(r)}_3)=1 + \frac{1}{3-\mu}\approx 2.259$, where $\mu$ is the unique real root of the polynomial $x^3-2x^2-1$.

In this paper, we focus on the class of sequences 
$$C^{(r)}_d = \{\uu: \uu \text{ \ is a recurrent rich $d$-ary sequence}\}. $$
Known results provide the following lower and upper bound
$$2\leq \RT^*(C^{(r)}_d)  \leq \RT(C^{(r)}_d) \leq 2+ \tfrac{1}{t_d-1},$$ where $t_d$ is as in \eqref{eq:ARsequence}.
The inequality $2\leq \RT^*(C^{(r)}_d)$ was proven in~\citep{PStarosta2013}. The inequality $\RT(C^{(r)}_d) \leq 2+ \tfrac{1}{t_d-1}$ is a consequence of the fact that $d$-ary episturmian sequences belong to the class $C^{(r)}_d$. 
Note that the upper bound  $2+\tfrac{1}{t_d-1} > 3$ for every $d \in \mathbb{N}, d\geq  2$. 

Our new result states that the announced lower  bound $2$ for the asymptotic repetition threshold is attained for every alphabet size.
\begin{theorem}\label{thm:main} Let $d \in \N, d \geq 2$. Then 
    $$\RT^*(C^{(r)}_d) =\inf\{E^*(\uu): \uu \text{\ is a recurrent rich $d$-ary sequence}\}\,  = 2.$$
\end{theorem}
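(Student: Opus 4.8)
\emph{Reduction to the binary case.} The inequality $\RT^*(C^{(r)}_d)\ge 2$ is known for every $d\ge 2$ by \citep{PStarosta2013}, so only $\RT^*(C^{(r)}_d)\le 2$ has to be shown. Since every binary sequence is in particular $d$-ary, it suffices to construct, for each $\varepsilon>0$, a uniformly recurrent rich \emph{binary} sequence $\uu$ with $E^*(\uu)<2+\varepsilon$; any such $\uu$ then belongs to $C^{(r)}_d$ for all $d\ge 2$. (If a ``$d$-ary'' sequence is required to use all $d$ letters, one post-composes the binary construction with a fixed richness-preserving $d$-letter morphism whose images create no longer repetitions, and the estimates below are unaffected.) Observe that, being recurrent and rich, such a $\uu$ automatically satisfies $E^*(\uu)\ge 2$ by \citep{PStarosta2013}; thus the construction is really a \emph{squeezing}: one must show that the squares which richness forces to occur in $\uu$ cannot be extended by more than a lower-order amount.

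\emph{The construction.} The plan is to fix a one-parameter family of primitive binary morphisms $\varphi_k$, $k\in\N$, chosen so that each $\varphi_k$ lies in a class that preserves palindromic richness -- concretely a morphism of class $\mathcal P$ in the sense of Glen et al.\ and Pelantov\'a--Starosta, i.e.\ there is a palindrome $p$ with $\varphi_k(0)p$ and $\varphi_k(1)p$ both palindromes. Then the fixed point $\uu_k=\lim_n\varphi_k^n(0)$ is rich and, being the fixed point of a primitive morphism, uniformly recurrent. The morphisms should moreover be ``Thue--Morse-like'': at every scale the longest repetitions of $\uu_k$ are essentially squares, and increasing $k$ should dilute the self-similar structure so that the relative overhang of these squares tends to $0$. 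Producing an explicit such $\varphi_k$ -- balancing membership in class $\mathcal P$ against near-overlap-freeness -- is the creative crux of the argument; a plausible shape is $\varphi_k(0),\varphi_k(1)$ short words built from blocks of the type $0^a1\cdots10^a$, tuned so that no internal power exceeds $2+\tfrac1k$.

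\emph{Bounding the asymptotic critical exponent.} With such a family at hand, $E^*(\uu_k)$ is computed by the bispecial-factor description of the asymptotic critical exponent used for episturmian sequences in \cite{DvPe2024}: one reads off from the incidence of $\varphi_k$ the finitely many families of bispecial factors of $\uu_k$ together with the lengths of their shortest return words (here it is convenient that $\varphi_k$ is marked, so that occurrences of long factors synchronize with the $\varphi_k$-block decomposition), and then evaluates the resulting $\limsup$. Equivalently, one desubstitutes: every sufficiently long repetition $v^e$ in $\uu_k$ has a synchronized $\varphi_k$-preimage that is again a repetition in $\uu_k$, of period roughly $|v|$ divided by the growth rate of $\varphi_k$ and of exponent differing from $e$ by an amount vanishing as $|v|\to\infty$; iterating down to bounded length reduces the $\limsup$ over repetitions of growing period to a finite computation. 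The expected outcome is that $E^*(\uu_k)$ tends to $2$ as $k\to\infty$ (plausibly as $2+O(1/k)$), which together with $\RT^*(C^{(r)}_2)\ge 2$ yields $\RT^*(C^{(r)}_d)=2$.

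\emph{The main obstacle.} The hard part is the design step, namely reconciling richness with near-overlap-freeness. Richness forces $\uu_k$ to contain, around every palindromic bispecial factor, a complete return word that is itself a palindrome, and such configurations are precisely what produces repetitions of exponent strictly above $2$; this is why every binary rich sequence contains an overlap (by \citep{CuMoRa2020} its critical exponent is at least $2+\tfrac{\sqrt2}{2}>2$) and why $\RT(C^{(r)}_2)>2$. One must therefore arrange the $\varphi_k$ so that these forced repetitions are squares with overhang $o(\text{period})$ \emph{and} so that the overhangs do not accumulate along the desubstitution cascade -- the latter being a delicate, case-heavy verification. A secondary but necessary point is to carry out the richness proof cleanly: choosing $\varphi_k$ to lie literally in a richness-preserving class, rather than re-counting palindromic factors or complete return words by hand for each $k$, is what keeps that under control.
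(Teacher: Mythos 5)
Your overall strategy matches the paper's: take the lower bound $\RT^*(C^{(r)}_d)\ge 2$ from \citep{PStarosta2013}, reduce to exhibiting binary rich uniformly recurrent sequences with asymptotic critical exponent tending to $2$, and compute that exponent via bispecial factors and shortest return words. But the proposal has a genuine gap exactly where you flag "the creative crux": the family of sequences is never constructed, and everything that is actually hard in the paper lives there. The paper does not produce a one-parameter family of \emph{binary} morphisms at all. It takes the $d$-letter morphism $\varphi_d:\ 0\mapsto 01,\ 1\mapsto 02,\dots,\ d-2\mapsto 0(d-1),\ d-1\mapsto 0(d-1)(d-1)$, proves its fixed point $\uu_d$ is rich by showing every bispecial factor is ordinary, computes $E^*(\uu_d)=1+\frac{1}{3-\Lambda_d}$ where $\Lambda_d$ is the spectral radius of the incidence matrix (so $\Lambda_d\to 2$ and $E^*\to 2$), and only then passes to the binary alphabet via the projection $\pi(i)={\tt ab}^i$, which is an injective morphism with synchronization points, hence preserves $E^*$. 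The alphabet size $d$ is the tuning parameter; trying to realize the dilution directly inside a binary substitution of bounded alphabet is not what happens, and it is not clear such a family exists in the simple form you sketch.

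A second concrete problem is your plan to "choose $\varphi_k$ to lie literally in a richness-preserving class" so that richness comes for free. No such usable class is available: class $P$ guarantees (at best) infinitely many palindromic factors, not richness, and the paper explicitly notes that morphisms preserving richness are not classified. Accordingly the paper must prove richness twice by hand: for $\uu_d$ via ordinariness of all bispecial factors (Corollary 5.10 of \cite{BaPeSt2010}), and separately for $\pi(\uu_d)$ — which \emph{does} have non-ordinary bispecial factors such as $\varepsilon$ — via the finer criterion $\mathrm{b}(w)=\#\mathrm{Pext}(w)-1$ for palindromic bispecials (Theorem 3.10 of \cite{BaPeSt2010}). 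Without an explicit construction, a richness proof, and the return-word computation, the proposal establishes only the (already known) framework, not the theorem.
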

Our proof is based on the study of the fixed point $\uu_d$ of the morphism
\begin{equation}\label{eq:morphismFi}\begin{array}{rrcl} 
\varphi_d: 
&  0 & \to &   01 \\
&  1 & \to &   02 \\
&   2 & \to &   03 \\
& \vdots~ && ~\vdots \\
&   d-2 & \to &   0(d-1) \\
&   d-1 & \to &   0(d-1)(d-1) 
\end{array}\end{equation}
and subsequent application of a suitable projection of $\uu_d$ to the binary alphabet.

Let us note that the morphism $\varphi_3$ was introduced by Baranwal and Shallit~\citep{BaSh19}. By a~suitable projection of $\uu_3$ to the binary alphabet, they obtained a rich sequence with the critical exponent equal to $2+\frac{\sqrt{2}}{2}$, i.e., the upper bound on $\RT(C^{(r)}_2)$.
In the case $d=2$,  the fixed point $\uu_2$ of $\varphi_2$ equals $0{\bf f}$, where ${\bf f}$ is the Fibonacci sequence fixed by the morphism $0\mapsto 1$ and $1\mapsto 10$. This case is well understood. Consequently, when studying properties of $\uu_d$ in the sequel, we consider only $d\geq 3$. 

\medskip

After Introduction and Preliminaries, the remaining text is devoted to the proof of Theorem~\ref{thm:main}. It consists of the following steps: 
In Section~\ref{sec:BSfactors} we study the structure of bispecial factors of $\uu_d$ and show that   $\uu_d$  is rich in palindromes, see  Theorem~\ref{coro:richness}.  Section~\ref{sec:return} focuses on  description of return words to bispecial factors in $\uu_d$, which play an essential role in computation of the asymptotic critical exponent. In Section~\ref{sec:asymptotic}, we determine the asymptotic critical exponent of $\uu_d$,  see Theorem~\ref{thm:HodnotaE*}.   
Theorem \ref{thm:main}  is finally proven in Section~\ref{sec:konecne}.

\section{Preliminaries}
An \textit{alphabet} is a finite set of symbols, called \textit{letters}. Throughout the text, we work with the alphabet $\mathcal{A} = \{0,1,\ldots, d-1\}$. 
A \textit{word} $u$ over $\mathcal A$ of \textit{length} $n$ is a finite string $u = u_0 u_1 \cdots u_{n-1}$, where $u_j\in\mathcal A$ for all $j \in \{0,1,\dots, n-1\}$. The length of $u$ is denoted $|u|$ and $|u|_{i}$ denotes the number of occurrences of the letter ${i}\in\mathcal A$ in the word $u$. The \textit{Parikh vector} $ \vec{u} \in \N^{d}$ is the vector defined as ${\vec u } = (|u|_{0}, |u|_{1}, \dots, |u|_{d-1})^{ T}$. The set of all finite words over $\A$ is denoted $\A^*$. The set $\A^*$ equipped with concatenation as the operation forms a monoid with the \textit{empty word} $\varepsilon$ as the neutral element. Consider $u, p, s, v \in \A^*$ such that $u=pvs$, then the word $p$ is called a \textit{prefix}, the word $s$ a \textit{suffix} and the word $v$ a \textit{factor} of $u$. Moreover, $us^{-1}$ denotes the word $u$ without the suffix $s$, i.e., $us^{-1}=pv$.

A~\textit{sequence} $\uu$ over $\A$ is an infinite string $\uu = u_0 u_1 u_2 \cdots$ of letters $u_j \in \A$ for all $j \in \N$. A \textit{word} $w$ over $\mathcal A$ is called a~\textit{factor} of the sequence $\uu = u_0 u_1 u_2 \cdots$ if there exists $j \in \mathbb N$ such that $w = u_j u_{j+1} u_{j+2} \cdots u_{j+|w|-1}$. The integer $j$ is called an \textit{occurrence} of the factor $w$ in the sequence $\uu$. If $j=0$, then $w$ is a \textit{prefix} of $\uu$.

The \textit{language} $\mathcal{L}(\uu)$ of a sequence $\uu$ is the set of factors occurring in $\uu$.
The \textit{factor complexity} of a sequence $\uu$ is a mapping ${\mathcal C}:\mathbb N \to \mathbb N$, where $${\mathcal C}(n)=\#\{w \in {\mathcal L}(\uu) \ : \ |w|=n\}\,.$$ 
The language $\mathcal{L}(\uu)$ is called \textit{closed under reversal} if for each factor $w=w_0w_1\cdots w_{n-1}$, its \textit{mirror image} $\overline{w}=w_{n-1}\cdots w_1 w_0$ is also a factor of $\uu$.
A~factor $w$ of a sequence $\uu$ is \textit{left special} if $iw, jw \in \mathcal{L}(\uu)$ for at least two distinct letters ${i, j} \in \A$. A \textit{right special} factor is defined analogously. A factor is called \textit{bispecial} if it is both left and right special. The set of left extensions is denoted $\mathrm{Lext}(w)$, i.e., $\mathrm{Lext}(w)=\{iw \in {\mathcal L}(\uu)\ : \ i \in {\mathcal A}\}$. Similarly, $\mathrm{Rext}(w)=\{wi \in {\mathcal L}(\uu) \ :\ i \in {\mathcal A}\}$. The set of both-sided extensions is denoted $\mathrm{Bext}(w)=\{iwj \in {\mathcal L}(\uu)\ :\ i,j \in {\mathcal A}\}$. The \textit{bilateral order} $\mathrm{b}(w)$ of $w\in {\mathcal L}(\uu)$ is defined 
$$\mathrm{b}(w)=\#\mathrm{Bext}(w)-\#\mathrm{Lext}(w)-\#\mathrm{Rext}(w)+1\,.$$
Factors that are not bispecial have evidently the bilateral order equal to zero. 
A bispecial factor $w \in {\mathcal L}(\uu)$ is called \textit{ordinary} if $\mathrm{b}(w)=0$.

A word $w$ is a \textit{palindrome} if $w$ is equal to its mirror image, i.e., $w=\overline{w}$.
If $w$ is a~palindromic factor of a sequence $\uu$ over $\mathcal A$, then the set of palindromic extensions of $w$ is $\mathrm{Pext}(w)=\{iwi \in {\mathcal L}(\uu)\ : \ i \in {\mathcal A}\}$.

A sequence $\uu$ is \textit{recurrent} if each factor of $\uu$ has infinitely many occurrences in $\uu$. Moreover, a recurrent sequence $\uu$ is \textit{uniformly recurrent} if the distances between consecutive occurrences of each factor in $\uu$ are bounded. If a uniformly recurrent sequence $\uu$ contains infinitely many palindromic factors, then its language ${\mathcal L}(\uu)$ is closed under reversal. 
A~sequence $\uu$ is \textit{eventually periodic} if there exist words $w \in \A^*$ and $v \in \A^* \setminus \{\varepsilon\}$ such that $\uu$ can be written as $\uu = wvvv \cdots = wv^\omega$. If $\uu$ is not eventually periodic, $\uu$ is called \textit{aperiodic}.

Consider a factor $w$ of a recurrent sequence $\uu = u_0 u_1 u_2 \cdots$. Let $j < \ell$ be two consecutive occurrences of $w$ in $\uu$. Then the word $u=u_j u_{j+1} \cdots u_{\ell-1}$ is a \textit{return word} to $w$ in $\uu$ and $uw$ is a \textit{complete return word} to $w$ in $\uu$.

A \textit{morphism} is a map $\psi: \A^* \to \B^*$ such that $\psi(uv) = \psi(u)\psi(v)$ for all words $u, v \in \A^*$.
The morphism $\psi$ can be naturally extended to a sequence $\uu=u_0 u_1 u_2\cdots$ over $\A$ by setting
$\psi(\uu) = \psi(u_0) \psi(u_1) \psi(u_2) \cdots\,$.
Consider a factor $w$ of $\psi({\uu})$. We say that $(w_1, w_2)$ is a \emph{synchronization point} of $w$ if $w=w_1w_2$ and for all $p,s \in {\mathcal L}(\psi({\uu}))$ and $v \in {\mathcal L}({\uu})$ such that $\psi(v)=pws$ there exists a factorization $v=v_1v_2$ of $v$ with $\psi(v_1)=pw_1$ and $\psi(v_2)=w_2s$. We denote the synchronization point by $w_1\bullet w_2$.

A \textit{fixed point} of a morphism $\psi:  \A^* \to  \A^*$ is a sequence $\uu$ such that $\psi(\uu) = \uu$.
We associate to a morphism $\psi: \A^* \to  \A^*$ the \textit{incidence matrix} $M_\psi$ defined for each $i,j \in \{0,1,\dots, d-1\}$ as $(M_\psi)_{ij}=|\psi(j)|_{i}$. 
A morphism $\psi$ is \textit{primitive} if the matrix $M_\psi$ is primitive, i.e., there exists $k\in \mathbb N$ such that $M_\psi^k$ is a positive matrix.   

By definition, we have for each $u \in \A^*$ the following relation for the Parikh vectors $\vec{\psi}(u)=M_\psi\vec{u}$.

 \medskip 
Let ${\uu}$ be a sequence over $\mathcal A$. Then the \textit{uniform frequency} of the letter ${i}\in \mathcal A$ is equal to $f_i$ if for any sequence $(w_{n})$ of factors of ${\uu}$ with increasing lengths 
$$f_i=\lim_{n\to \infty}\frac{|w_{n}|_{i}}{|w_{n}|}\,.$$
It is known that fixed points of primitive morphisms have uniform letter frequencies~\cite{Quef87}.

Last, but not least, let us summarize basic properties of the morphism $\varphi_d$, introduced in \eqref{eq:morphismFi}, and its fixed point $\uu_d$:

\begin{enumerate}
\item The morphism $\varphi_d$ is primitive and injective.
\item If $w \in \A^*$ is a palindrome, then $\varphi_d(w)0$ is a palindrome. Hence, $\uu_d$ contains infinitely many palindromic factors.
\item The sequence $\uu_d$ is uniformly recurrent since $\uu_d$ is a fixed point of a primitive morphism. 
\item The language of $\uu_d$ is closed under reversal. This follows putting Items 2 and 3 together. 
\end{enumerate}


\section{Richness of $\uu_d$ and its bispecial factors }\label{sec:BSfactors}
The aim of this section is to show that each bispecial factor of the fixed point $\uu_d$ of $\varphi_d$, where $d \geq 3$, has the bilateral order equal to zero.
Richness of $\uu_d$ is then a consequence of the following theorem. 
\begin{theorem}[Corollary 5.10 \citep{BaPeSt2010}]\label{thm:richness_ordinaryBS} 
Let $\uu$ be a sequence with the language closed under reversal.
If all bispecial factors are ordinary, then $\uu$ is rich.
\end{theorem}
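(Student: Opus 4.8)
The plan is to turn richness into an identity between factor and palindromic complexity, and then to extract that identity from the hypothesis. Write $\mathcal P(n)$ for the number of palindromes of length $n$ in $\mathcal L(\uu)$. I would invoke the theorem of Bucci, de~Luca, Glen and Zamboni~\citep{BuLuGlZa2009}: for a sequence whose language is closed under reversal one always has $\mathcal P(n)+\mathcal P(n+1)\le\mathcal C(n+1)-\mathcal C(n)+2$, and the sequence is rich precisely when equality holds for every $n$. So it suffices to prove this equality under the assumption that all bispecial factors of $\uu$ are ordinary; I also assume $\uu$ recurrent, which costs nothing here because the statement is applied to the uniformly recurrent word $\uu_d$.

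First I would evaluate the right-hand side. Summing bilateral orders over the factors of a fixed length gives the standard identity $\mathcal C(n+2)-2\mathcal C(n+1)+\mathcal C(n)=\sum_{|w|=n}\mathrm b(w)$, in which only bispecial factors contribute. Ordinariness makes this second difference vanish for all $n$, so $\mathcal C(n+1)-\mathcal C(n)$ is the constant $\mathcal C(1)-\mathcal C(0)=\#\mathcal A'-1$, where $\mathcal A'$ is the set of letters occurring in $\uu$. Thus $\mathcal C(n+1)-\mathcal C(n)+2=\#\mathcal A'+1$, and putting $A(n):=\#\mathcal A'+1-\mathcal P(n)-\mathcal P(n+1)\ge 0$ we have $A(0)=0$ (since $\mathcal P(0)=1$, $\mathcal P(1)=\#\mathcal A'$) and $A(n)-A(n-1)=\mathcal P(n-1)-\mathcal P(n+1)$. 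Hence, if we manage to show $\mathcal P(n+1)\ge\mathcal P(n-1)$ for all $n$, then $A$ is non-increasing, so $0\le A(n)\le A(0)=0$ and $\uu$ is rich.

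The crucial claim is: \emph{every palindromic factor $p$ of $\uu$ has a palindromic extension}, i.e.\ $\#\mathrm{Pext}(p)\ge1$. Granting this and using that each palindrome of length $m+2$ has a unique central palindrome of length $m$ — so that $\mathcal P(m+2)=\sum_{p=\overline p,\;|p|=m}\#\mathrm{Pext}(p)\ge\mathcal P(m)$ — we obtain exactly the inequality needed above. To prove the claim I would use that, as $p$ is a palindrome and $\mathcal L(\uu)$ is closed under reversal, $\mathrm{Bext}(p)$ is invariant under the transposition $ipj\mapsto jpi$ whose fixed points are the elements of $\mathrm{Pext}(p)$; hence $\#\mathrm{Bext}(p)\equiv\#\mathrm{Pext}(p)\pmod 2$ and $\#\mathrm{Lext}(p)=\#\mathrm{Rext}(p)$. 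If $p$ is bispecial it is ordinary, so $\#\mathrm{Bext}(p)=2\#\mathrm{Lext}(p)-1$ is odd, forcing $\#\mathrm{Pext}(p)$ odd, hence $\ge1$. If $p$ is not bispecial then, as $\#\mathrm{Lext}(p)=\#\mathrm{Rext}(p)$, it is neither left nor right special, so it has a unique left extension $ap$ and a unique right extension $pc$; recurrence gives an occurrence of $ap$ followed by a letter, which must be $c$, so $apc\in\mathcal L(\uu)$, and transposition symmetry then yields $cpa\in\mathcal L(\uu)$, whence $c=a$ by uniqueness of the left extension — so $apa\in\mathrm{Pext}(p)$.

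The step I would scrutinise most is this last claim, since it is the only place where ordinariness is truly used (through the parity of $\#\mathrm{Bext}(p)$), and one must make sure the non-bispecial palindromes are handled correctly using only reversal-closure and recurrence. If instead one wanted a self-contained proof without quoting~\citep{BuLuGlZa2009}, the genuine difficulty would shift to re-proving the equivalence between richness and the factor/palindromic complexity identity — naturally via complete return words to palindromes, as in~\citep{GlJuWi2009} — which is substantially more work than the bispecial argument sketched above.
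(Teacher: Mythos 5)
The paper does not prove this statement at all: it is imported verbatim as Corollary~5.10 of \citep{BaPeSt2010}, so there is no internal proof to compare against. Your argument is, as far as I can check, a correct independent derivation, and every step holds up: the second-difference identity $\mathcal C(n+2)-2\mathcal C(n+1)+\mathcal C(n)=\sum_{|w|=n}\mathrm b(w)$ together with ordinariness makes $\mathcal C(n+1)-\mathcal C(n)$ constant; the involution $iwj\mapsto jwi$ on $\mathrm{Bext}(w)$ gives $\#\mathrm{Bext}(w)\equiv\#\mathrm{Pext}(w)\pmod 2$ and $\#\mathrm{Lext}(w)=\#\mathrm{Rext}(w)$, so an ordinary palindromic bispecial factor has $\#\mathrm{Bext}(w)=2\#\mathrm{Lext}(w)-1$ odd and hence $\#\mathrm{Pext}(w)\ge 1$; the non-bispecial case is handled correctly (closure under reversal implies recurrence, so left extensions exist and the $apc\mapsto cpa$ trick forces $c=a$); and monotonicity of $\mathcal P$ in steps of two then pins $A(n)$ at $0$. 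The only external inputs are the inequality $\mathcal P(n)+\mathcal P(n+1)\le\mathcal C(n+1)-\mathcal C(n)+2$ and the equality characterization of richness from \citep{BuLuGlZa2009}; the inequality is usually stated for uniformly recurrent words (\citep{BaMaPe2007}), but it holds for recurrent reversal-closed words, and in this paper the theorem is only ever applied to the uniformly recurrent $\uu_d$, so nothing is lost.

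It is worth contrasting your route with the one the paper's source presumably takes, and which the paper itself uses later for the projection $\pi(\uu_d)$: the bilateral-order characterization (Theorem~\ref{thm:bilateralorder}, i.e.\ Theorem~3.10 of \citep{BaPeSt2010}). Note that the present statement is \emph{not} an immediate consequence of that characterization, because for a palindromic bispecial $w$ one needs $\mathrm b(w)=\#\mathrm{Pext}(w)-1$, and ordinariness alone does not obviously exclude $\#\mathrm{Pext}(w)=3$ with $\#\mathrm{Lext}(w)=3$. Your lemma that every palindrome has at least one palindromic extension is exactly the missing ingredient that closes this gap (a posteriori it even yields $\#\mathrm{Pext}(w)=1$ for every palindrome, since $\mathcal P(n+2)=\mathcal P(n)$ once $A\equiv 0$). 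So your proof via the factor/palindromic complexity identity is a genuinely different, and self-contained, path to the cited corollary; what it buys is independence from Corollary~5.10 at the modest cost of invoking the complexity identity of \citep{BuLuGlZa2009} as a black box, which you correctly identify as the part that would be expensive to reprove from scratch.
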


In the sequel, we fix $d\geq 3$, ${\mathcal A}=\{0,1,\dots, d-1\}$ and we abbreviate $\varphi = \varphi_d$ and $\uu = \uu_d$. 

First, we focus on both-sided extensions of bispecial factors.
It is readily seen that  
\begin{equation}\label{eq:list2factors}
(d-1)(d-1) \quad\text{and } \quad 0i, \  i0 \quad \text{ for }   i=1,2, \ldots, d-1    
\end{equation}
is the list of all factors of length 2 in $\uu$, in particular, it is equal to $\mathrm{Bext}(\varepsilon)$.

All these $2$-letter factors have synchronization points as there is always a synchronization point before $0$, i.e.,  using the notation from above we have $\bullet 0i$ and $i \bullet 0$,  and $(d-1)(d-1)$ occurs only as the suffix of $\varphi(d-1)$, i.e., $(d-1)(d-1)\bullet$.
As for the factors of length one, the factors $\bullet0, 1\bullet, \ldots, (d-2)\bullet$ all have synchronization points. Only the factor $d-1$ is without a~synchronization point as it can appear in both $\varphi(d-1) = 0(d-1)(d-1)$ and $\varphi(d-2) = 0(d-1)$.

Synchronization points can help us find preimages (under the injective morphism~$\varphi$) of factors of $\mathbf{u}$. For instance, we will repetitively use this fact: if a word $w$ begins and ends with $0$, there is a unique $w'$ such that $w = \varphi(w')0$.

\begin{lemma}\label{lem:Neni} 
Let $i,j,k,l \in \mathcal{A}$ be such that $1)$ $i\neq j$ and  $k\neq l$  and $2)$ $\max\{i,j\} \leq \min\{k,l\}$. Then  no  word $ w \in \mathcal{A}^*$ satisfies that both $iwk$ and $jwl$ belong to $\mathcal{L}(\uu)$. 
\end{lemma}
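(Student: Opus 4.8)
The plan is to argue by contradiction: suppose some $w$ has both $iwk$ and $jwl$ in $\mathcal{L}(\uu)$ with $i\neq j$, $k\neq l$ and $\max\{i,j\}\le\min\{k,l\}$. First I would dispose of the short cases $|w|\le 1$ by directly inspecting the list of factors of length $\le 3$ obtained from \eqref{eq:list2factors}; the constraint $\max\{i,j\}\le\min\{k,l\}$ together with $i\ne j$ forces $\{i,j\}$ to contain a letter strictly smaller than some letter of $\{k,l\}$, which is hard to reconcile with the fact that every letter except $0$ is both preceded and followed by $0$. In particular, for $w=\varepsilon$ one needs two distinct letters $i,j$ each of which can be followed by two distinct letters $k,l$; since $\mathrm{Rext}(i)=\{0\}$ for $i=1,\dots,d-1$, the only right-special letter is $0$, so this case is impossible, and similarly for $w$ a single letter.

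For $|w|\ge 2$ the main idea is to push the situation down through the morphism $\varphi$ using synchronization points, reducing $|w|$ until we reach a base case. The key observation is that the letter $0$ appears in $\uu$ exactly at the beginning of each block $\varphi(a)$, so every occurrence of a letter $\neq 0$ is immediately preceded by $0$ or by another copy of $d-1$; combined with $\max\{i,j\}\le\min\{k,l\}$ and $i\ne j$, at least one of $i,j$ is nonzero, which pins down the letter preceding $w$ in at least one of the two contexts. I would analyze the left boundary: if $i\ne 0$ then $iw$ is preceded (in its occurrence) by $0$ or is part of the $(d-1)(d-1)$ pattern, and similarly on the right the larger of $k,l$ constrains what follows $w$. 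Using the synchronization points $\bullet 0$, $i\bullet 0$, and $(d-1)(d-1)\bullet$, one can locate a $0$ just to the left of (an occurrence of) $w$ and a $0$ just to the right, so that $0iwk0$ and $0jwl0$ (or slight variants accounting for the $(d-1)(d-1)$ anomaly) are factors beginning and ending with $0$; then by the remark preceding the lemma each has a unique $\varphi$-preimage, say $\varphi(i'w'k')0$ and $\varphi(j'w'l')0$ with a \emph{common} middle factor $w'$ strictly shorter than $w$. One then checks that the images of $i',j'$ under $\varphi$ start with $0$ and their \emph{second} letters are $i{+}1, j{+}1$ (capped appropriately at $d-1$), and similarly the last letters of $\varphi(k'),\varphi(l')$ encode $k,l$; hence the inequality $\max\{i',j'\}\le\min\{k',l'\}$ and the disjointness $i'\ne j'$, $k'\ne l'$ are inherited, contradicting minimality of $|w|$.

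The main obstacle I anticipate is the exceptional behaviour of the letter $d-1$: it is the unique letter without a synchronization point, it can arise as the second letter of both $\varphi(d-2)$ and $\varphi(d-1)$, and the pattern $(d-1)(d-1)$ spoils the clean "$0$ precedes every nonzero letter" statement. So the case analysis near the right end, where the letters $k,l$ are large (the hypothesis $\max\{i,j\}\le\min\{k,l\}$ biases $k,l$ toward $d-1$), will need care: one must separately handle whether $w$ ends inside a $\varphi$-block, whether $wk$ or $wl$ straddles a $(d-1)(d-1)$, and whether $k$ or $l$ equals $d-1$ versus $d-2$. A clean way to organize this is to first show that any factor of the form $xw y$ with $x\in\{i,j\}$, $y\in\{k,l\}$ can be extended on both sides to a factor beginning and ending with $0$ whose $\varphi$-preimage has the desired inherited properties, treating the $d-1$ cases as bounded exceptions that are cleared by the explicit list \eqref{eq:list2factors}. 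Once the inductive step is in place, the descent terminates because $|w'|<|w|$, and the base cases $|w|\le 1$ were already excluded.
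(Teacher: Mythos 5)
Your plan follows essentially the same route as the paper's proof: take a shortest counterexample $w$, settle the base case from the list \eqref{eq:list2factors}, show that both the first and the last letter of $w$ must be $0$, write $w=\varphi(w')0$, and pull the four boundary letters back through $\varphi$ so that the new quadruple still satisfies $i'\neq j'$, $k'\neq l'$ and $\max\{i',j'\}\le\min\{k',l'\}$, contradicting minimality. A few slips to correct before this becomes a proof. First, $\mathrm{Rext}(d-1)=\{0,d-1\}$, so it is false that every nonzero letter has $\{0\}$ as its only right extension; moreover the case $w=\varepsilon$ does not require $i$ and $j$ to be right special, only that $ik$ and $jl$ be factors. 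The correct base-case argument is that the hypotheses force $i,j\le d-2$ and $k,l\ge 1$, while the only length-$2$ factors $xy$ with $y\ge 1$ have $x\in\{0,d-1\}$. Second, the preimage letters \emph{decrement}: $\varphi(i-1)=0i$, so the inherited left extensions are $i-1$ and $j-1$ (and on the right $k'\in\{k-1,d-1\}$, $l'\in\{l-1,d-1\}$), not $i+1,j+1$. Finally, the troublesome right-end case you worry about is dispatched more cheaply than by a long case analysis: if the last letter of $w$ were $d-1$, its two right extensions would force $\{k,l\}=\{0,d-1\}$, hence $\min\{k,l\}=0<1\le\max\{i,j\}$, violating hypothesis $2)$ outright. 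With these corrections your descent terminates exactly as in the paper.
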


\begin{proof} 
Let us assume for contradiction that such a word $w$ exists. We choose the shortest one among all such words.   

First assume $w =\varepsilon$. The conditions $1)$ and $2)$ imply that $i,j \leq d-2$ and $1\leq  k,l$. Hence $ik$ or $jl$ does not belong to the list~\eqref{eq:list2factors} --  a contradiction.

Now consider a nonempty word $w=w_0w_1\cdots w_{n-1}$ with  $n \in \mathbb{N}$. The assumption $iw, jw \in \mathcal{L}(\uu)$ and the list \eqref{eq:list2factors} give $w_0 = 0$ and  $i,j\geq 1$. By the same reason, $w_{n-1} = 0$. Hence, by definition of $\varphi$ in~\eqref{eq:morphismFi}, we have $w = \varphi(w')0$ for some $w' \in \mathcal{L}(\uu)$. 
As $1\leq i,j \leq d-2$, necessarily $0iwk = \varphi(i'w')0k \in \mathcal{L}(\uu)$, where $i'= i-1$, and $0jwl = \varphi(j'w')0l \in \mathcal{L}(\uu)$, where $j'=j-1$. Consequently, there exist distinct letters $k',l'\in \{k-1, l-1, d-1\}$ such that $i'w'k'$ and $j'w'l'$ belong to  $\mathcal{L}(\uu)$. We have thus found a shorter word $w'$ and letters $i',j', k',l'$ satisfying the assumptions of the lemma -- a contradiction.   
\end{proof}

To simplify notation we put
\begin{equation}\label{eq:DefF} 
    F_0=\varepsilon \quad \text{and} \quad \ F_i=\varphi(F_{i-1})0\,
    \quad \text{for}\quad  i=1,2,\ldots, d-1\,.
\end{equation}
Note that $F_i$ is a palindrome for all $i$ by the second property of $\varphi=\varphi_d$ in Preliminaries.

\begin{proposition}\label{prop:3extensions}  
Let $w$ be a bispecial factor in $\uu$ having at least two distinct letters smaller than $d-1$ as its left extensions. 
Denote $i =\min \{a\in \mathcal{A}: aw \in \mathcal{L}(\uu)\}$.  
Then $w = F_i$ and all both-sided extensions of $w$  are
$(d-1)w(d-1), iwk, kwi$, where $ k =i+1, i+2, \ldots, d-1$. 

In particular, the bilateral order of $w=F_i$  equals $0$ and $i\leq d-3$.    
    
\end{proposition}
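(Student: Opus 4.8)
The plan is to bootstrap from Lemma~\ref{lem:Neni} to pin down the extension structure, and then to identify $w$ itself by a synchronization/preimage argument. First, observe that Lemma~\ref{lem:Neni} already forbids many pairs of both-sided extensions. Precisely, if $iwk, jwl \in \mathcal{L}(\uu)$ with $i \neq j$, $k \neq l$ and $\max\{i,j\} \leq \min\{k,l\}$, we get a contradiction. We are told $w$ has at least two left extensions strictly below $d-1$, so among the left-extension letters there are two distinct ones, and $i = \min\{a : aw \in \mathcal{L}(\uu)\}$ satisfies $i \leq d-2$ in any case. I would first argue that the \emph{only} left extensions of $w$ are $i$ and $d-1$: if some $j$ with $i < j \leq d-2$ were also a left extension, then since $w$ is right special it has two distinct right extensions $k \neq l$, and by relabelling we could always arrange $\max\{i,j\}\le\min\{k,l\}$ unless both right extensions are $\le i$; a short case analysis using that $iw$ is extendable on the right shows this forces a forbidden configuration. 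Symmetrically — using that $\mathcal{L}(\uu)$ is closed under reversal and $F_i$ will turn out to be a palindrome — the only right extensions are $i$ and $d-1$. Then Lemma~\ref{lem:Neni} applied once more rules out the extension $iwi$ being present simultaneously with $iw(d-1)$ \emph{and} $(d-1)wi$ unless... — rather, it directly gives that $(d-1)w(d-1)$, $iw(d-1), \dots$ must be the configuration, i.e. the letter $i$ on the left pairs only with $d-1$ on the right and vice versa, while $(d-1)w(d-1)$ is allowed; this yields $\mathrm{Bext}(w) \subseteq \{(d-1)w(d-1), iw(d-1), (d-1)wi\}$ after noting $\mathrm{Lext}(w)=\{iw,(d-1)w\}$ and $\mathrm{Rext}(w)=\{wi,w(d-1)\}$. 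Counting, $\#\mathrm{Bext}(w) = 3$, $\#\mathrm{Lext}(w)=\#\mathrm{Rext}(w)=2$, so $\mathrm{b}(w) = 3 - 2 - 2 + 1 = 0$.

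The second half is to show $w = F_i$ and that the extensions on the right of $i$ actually spread over all of $k = i+1, \dots, d-1$, not just $d-1$. Here I would induct on $i$ (equivalently on $|w|$), peeling off one application of $\varphi$ at a time exactly as in the proof of Lemma~\ref{lem:Neni}. For $i = 1$: the hypotheses force $0$ and $d-1$ as left extensions, hence (reading the list~\eqref{eq:list2factors}) the structure around $w$ forces $w$ to begin and end with $0$ — unless $w = \varepsilon = F_0$, which is ruled out because $i \geq 1$ means $0w$ is a left extension, so $w$ begins with a letter that can follow $0$... — actually the base case should be $w$ with left extension $0$, i.e. $i$ arbitrary but the preimage terminates; I would set it up so that writing $w = \varphi(w')0$ repeatedly (justified by the synchronization point $\bullet 0$ and injectivity of $\varphi$), each step decreases $i$ by one and decreases the right-extension spread, until after $i$ steps one reaches a bispecial $w^{(i)}$ with $0 = \min$ of its left extensions, forcing $w^{(i)} = \varepsilon = F_0$ with $\mathrm{Bext}(\varepsilon)$ given by~\eqref{eq:list2factors}. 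Unwinding, $w = F_i$ and the right extensions of $iw = \varphi(\text{something})0$ come from $\varphi$ applied to the extensions $0, 1, \dots, d-1$ of $F_0 = \varepsilon$ — concretely, a right extension letter $k-1$ (or $d-1$, from the branch $\varphi(d-1) = 0(d-1)(d-1)$) of $w'$ lifts to the right extension $k$ of $w$, producing exactly $k = i+1, \dots, d-1$.

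Finally, for the clause $i \leq d-3$: since $w = F_i$ admits the extensions $iwk$ for $k = i+1, \dots, d-1$ and these are genuinely distinct both-sided extensions witnessing right-speciality, we need at least two values of $k$, hence $i+1 < d-1$, i.e. $i \leq d-3$. (Equivalently, $b(w)=0$ together with $\#\mathrm{Rext}(w) = 2$ forces the set $\{i+1,\dots,d-1\}$ to have at least two elements.) I expect the main obstacle to be the careful case analysis in the first half: cleanly showing that exactly $\{i, d-1\}$ are the left (and right) extensions and that the forbidden-pair lemma is invoked with the hypotheses genuinely satisfied. The bookkeeping of "which right-extension letter of $w'$ lifts to which right-extension letter of $w$" under $\varphi$, and verifying that $F_i$ as defined in~\eqref{eq:DefF} is exactly the palindrome obtained, is routine but must be done carefully; the synchronization-point facts recorded just before Lemma~\ref{lem:Neni} are what make the preimage step rigorous.
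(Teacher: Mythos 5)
Your proof has a genuine and fatal misreading at its core. In the first half you set out to show that $\mathrm{Lext}(w)=\{i,d-1\}$, $\mathrm{Rext}(w)=\{i,d-1\}$ and $\#\mathrm{Bext}(w)=3$. This contradicts both the hypothesis and the conclusion of the proposition: the hypothesis demands \emph{two distinct left-extension letters strictly smaller than $d-1$}, which the set $\{i,d-1\}$ does not contain, and the conclusion lists $d-i$ left extensions ($i,i+1,\dots,d-1$) and $2(d-1-i)+1$ both-sided extensions. Already the base case $w=\varepsilon=F_0$ has every letter as a left extension. Your attempted use of Lemma~\ref{lem:Neni} to eliminate a third left extension $j$ with $i<j\le d-2$ cannot succeed, because for the configurations that actually occur (e.g.\ $iwj$ and $jwi$ both factors) the lemma's hypothesis $\max\{i,j\}\le\min\{k,l\}$ fails, so nothing is forbidden. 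The correct bilateral-order computation is $\mathrm{b}(w)=\bigl(1+2(d-1-i)\bigr)-2(d-i)+1=0$, not $3-2-2+1$. Likewise, $i\le d-3$ follows from the hypothesis (two left extensions below $d-1$ forces $i+1\le d-2$), not from right-speciality alone, which only gives $i\le d-2$.

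The second half (induction on $i$, peeling off $\varphi$ via $w=\varphi(w')0$, base case $\varepsilon=F_0$ with $\mathrm{Bext}(\varepsilon)$ read off from the list~\eqref{eq:list2factors}) is the paper's actual strategy, but you gloss over precisely the step where Lemma~\ref{lem:Neni} is genuinely needed. A right-special factor of $\uu$ can a priori end in $d-1$ (since $(d-1)0$ and $(d-1)(d-1)$ are both factors), so "$w$ begins and ends with $0$" does not come for free. The paper splits on $w_{n-1}\in\{0,d-1\}$: in the case $w_{n-1}=d-1$ one writes $w=u0(d-1)$ with $u=\varphi(u')$, transports the two left extensions $i,j$ down to $(i-1)u'k'$ and $(j-1)u'l'$ with $\{k',l'\}=\{d-2,d-1\}$ and $i-1,j-1\le d-3$, and this \emph{is} the forbidden configuration of Lemma~\ref{lem:Neni}; hence the case cannot occur. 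Without handling this case your induction does not go through. Finally, your unwinding of how extensions of $w'=F_{i-1}$ lift to extensions of $w=F_i$ under $\varphi$ is the right idea and matches the paper, but it only produces the full list $(d-1)w(d-1)$, $iwk$, $kwi$ for $k=i+1,\dots,d-1$ once you accept that the induction hypothesis delivers the full ($2(d-i)+1$)-element extension set for $F_{i-1}$ --- which your first half explicitly denies.
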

\begin{proof} 
We proceed  by induction on $i$. 

As $i=0$ and a letter $j,\ 1\leq j\leq d-2,$ are left extensions of $w$, the list \eqref{eq:list2factors} forces $w =\varepsilon =F_0$. The list of  both-sided extensions  of $w$ coincides with $\eqref{eq:list2factors}$, which is in agreement with the statement of the proposition.  

Now assume $i \geq 1$. As $0$ does not belong to the left extensions of $w$, but $i,\ 1\leq i\leq d-2,$\ does,  $w= w_0w_1\cdots w_{n-1}$ with $w_0 =0$.  Let $j$ be a left extension of $w$, where   $i<j<d-1$. Obviously $0iw$  and $0jw$  belong to $\mathcal{L}(\uu)$.  Since  $w$ has at least  two right extensions, $w_{n-1} \in \{0,d-1\}$. 
\begin{description}
    \item[Case $w_{n-1} = 0$.]  \ \\
    There exists $w'$ such that $w=\varphi(w')0$. 
    
    Since $0iw=\varphi\bigl((i-1)w'\bigr)0$ and  $0jw  = \varphi\bigl((j-1)w'\bigr)0$,  the letters $i-1$ and $j-1$ are left extension of $w'$. In other words, $w'$ is a left special factor and $i-1$ is its minimal left extension. 
    
    As $wk =  \varphi(w')0k$  and $wl =  \varphi(w')0l$  are factors of $\uu$ for some letters $k,l, k \not =l$, the factor $w'$ is right special as well. By induction hypothesis, $w' = F_{i-1}$ and thus $w = \varphi(F_{i-1})0 = F_i $.  Applying $\varphi$ to the both-sided extensions of $F_{i-1}$, we easily deduce that the both-sided extensions of $w$  coincides with the list stated in the proposition.   
    
    \item[Case $w_{n-1} = d-1$.] \  \\
    The factor $w$ can be prolonged to the right  only by two letters, say  $k, l$, where  $\{k,l\} = \{0,d-1\}$.  Then   the penultimate letter of $w$ is $w_{n-2} = 0$ and $w=u0(d-1)$, where $u$ is either the empty word or $u$ has a prefix $0$.  It implies that  $u=\varphi(u')$ for some $u'\in {\mathcal L}(\uu)$. 
    
    The fact that 

    \[
        \mathcal{L}(\uu) \ni 0iwk = 0iu0(d-1)k = \varphi\bigl((i-1)u'\bigr)0(d-1)k
    \]
    and 
    \[
        \mathcal{L}(\uu) \ni  0jwl = 0ju0(d-1)l = \varphi\bigl((j-1)u'\bigr)0(d-1)l
    \]
    
    implies that  $(i-1)u'k'$ and $(j-1)u'l'$  belong to the language of $\uu$ with $\{k', l'\} = \{d-2, d-1\}$  and $i-1, j-1\leq d-3$. Existence of $u'$ contradicts Lemma \ref{lem:Neni}.  Thus the case $w_{n-1} = d-1$ cannot occur. 
 \end{description}
 
\end{proof}
Since the language is closed under reversal, an analogous statement holds for right special extensions, too.
\begin{corollary} \label{coro:d-1}
The letter $d-1$ belongs to the left and to the right extensions of each bispecial factor of $\uu$ and the factors $F_0, F_1, \ldots, F_{d-3}$ are the only bispecial factors with more than two left extensions or more than two right extensions.
\end{corollary}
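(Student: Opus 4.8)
The plan is to derive \Cref{coro:d-1} from \Cref{prop:3extensions} together with the reversal-closure of $\mathcal{L}(\uu)$, which was established as Item~4 in the Preliminaries. First I would address the claim that $d-1$ is always a left and a right extension of every bispecial factor $w$. Suppose $w$ is bispecial. If $w$ has at least two left extensions that are both smaller than $d-1$, then \Cref{prop:3extensions} applies directly and lists $(d-1)w(d-1)$ among the both-sided extensions, so in particular $(d-1)w\in\mathcal{L}(\uu)$. The remaining case is that $w$ has at most one left extension smaller than $d-1$; since $w$ is left special it has at least two distinct left extensions, so one of them must be $d-1$, giving $(d-1)w\in\mathcal{L}(\uu)$ immediately. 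This shows $d-1$ is always a left extension of a bispecial factor; applying the same argument to $\overline{w}$ (which is bispecial because the language is closed under reversal) and then reversing back shows $w(d-1)\in\mathcal{L}(\uu)$ as well.

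Next I would handle the classification of bispecial factors with more than two one-sided extensions. Let $w$ be a bispecial factor with, say, at least three left extensions. Then among these at least three distinct letters, at least two are strictly smaller than $d-1$, so \Cref{prop:3extensions} applies: $w=F_i$ where $i=\min\{a:aw\in\mathcal{L}(\uu)\}$, and the proposition's conclusion $i\leq d-3$ forces $w\in\{F_0,\dots,F_{d-3}\}$. Symmetrically, if $w$ has at least three right extensions, apply the previous paragraph's reasoning to $\overline{w}$: $\overline{w}$ has at least three left extensions, hence $\overline{w}=F_i$ for some $i\leq d-3$; but each $F_i$ is a palindrome (noted right after \eqref{eq:DefF}), so $w=\overline{F_i}=F_i$ as well. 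Conversely, the ``only'' direction also needs that each $F_i$ with $i\leq d-3$ genuinely is bispecial with more than two extensions on at least one side, which again follows from \Cref{prop:3extensions} read in the forward direction (the listed both-sided extensions give left extensions $i$ and $d-1$ and $d-1>i$, and right extensions $i$ and all of $i+1,\dots,d-1$, which is at least two when $i\leq d-3$, in fact at least three when $i\leq d-3$), so $F_0,\dots,F_{d-3}$ indeed have three or more right extensions.

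I do not anticipate a serious obstacle here: the corollary is essentially a bookkeeping consequence of \Cref{prop:3extensions}, the pigeonhole observation that three distinct letters in $\mathcal{A}$ must include two that are $<d-1$, and the palindromicity of the $F_i$ together with reversal-closure of the language. The one point that requires a little care is making sure the statement ``$F_0,\dots,F_{d-3}$ are the only bispecial factors with more than two left \emph{or} more than two right extensions'' is read correctly: a factor with exactly two left extensions and exactly two right extensions is \emph{not} claimed to be absent, and indeed there can be many such bispecial factors (those treated implicitly in the case distinction of \Cref{prop:3extensions}); the corollary only pins down those that are ``more than two'' on some side. I would phrase the write-up so this asymmetry is unambiguous, and I would explicitly invoke reversal-closure each time I pass from left-special to right-special behaviour.
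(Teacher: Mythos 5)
Your argument is correct and follows exactly the route the paper intends: the corollary is stated without a separate proof as an immediate consequence of Proposition~\ref{prop:3extensions} combined with closure of $\mathcal{L}(\uu)$ under reversal (and the palindromicity of the $F_i$), which is precisely the bookkeeping you carry out. The case split on whether two left extensions lie below $d-1$, the pigeonhole step for three extensions, and the pass to $\overline{w}$ are all sound.
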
 
To describe the structure of bispecial factors with exactly two left and two right extensions, we use the method by Klouda~\cite{K2012}.
This method does not work directly with bispecial factors, but with bispecial triplets $((a,b), w, (g,h))$, where $a, b, g, h$ are letters\footnote{In general, they can be words longer than one, but in the case of $\varphi$ it suffices to consider letters. We also use the fact that our letters are ordered.} and $a < b,\ g < h$ and at least one of $awg$ and $awh$ and at least one of $bwg$ and $bwh$ are factors. 
Clearly, each bispecial factor is a middle element of a bispecial triplet. We say that the bispecial factor is \textit{associated} with the bispecial triplet. 
However, if there are more than two left or two right extensions, there is more than one such bispecial triplet.

The method~\cite{K2012} says that there is a mapping that maps one bispecial triplet to another and that all bispecial triplets are obtained by repetitive application of the mapping to a finite set of \textit{initial} bispecial triplets.
This mapping is defined by two directed labeled graphs of left and right extensions.
For the morphism $\varphi$ the construction of the graphs is straightforward.

\noindent For left extensions, the set of vertices is $\{ (a, b) : a, b \ \text{ are letters}, \ a < b \}$ and the edges are:
\begin{equation}\label{eq:left}
\begin{array}{rcll}
    (a,b) &\overset{\varepsilon}{\longrightarrow}  & (a + 1, b + 1) & \text{if } a < b < d - 1\,; \\
    (a, d-1)& \overset{\varepsilon}{\longrightarrow}  & (a + 1, d - 1) & \text{if } a < d - 2\,; \\
   (d-2, d-1)& \overset{d-1}{\longrightarrow}  & (0, d - 1)\,, &
\end{array}
\end{equation}
i.e., the labels of the edges are given by the longest common suffix of $\varphi(a)$ and $\varphi(b)$, we also use notation ${\rm lcs\,}\{\varphi(a), \varphi(b)\}$.

\noindent For right extensions, the set of vertices is $\{ (g, h) : g,h \ \text{ are letters}, \ g < h \}$ and the edges are:
\begin{equation}\label{eq:right}
\begin{array}{rcll}
    (g,h) & \overset{0}{\longrightarrow}  & (g + 1, h + 1) & \text{if } g < h < d - 1\,; \\
    (g, d-1) & \overset{0}{\longrightarrow}  & (g + 1, d - 1) & \text{if } g < d - 2\,; \\
   (d-2, d-1) & \overset{0(d-1)}{\longrightarrow}  & (0, d - 1)\,, &
\end{array}
\end{equation}
i.e., the labels of the edges are given by the longest common prefix of $\varphi(g)$ and $\varphi(h)$, also denoted ${\rm lcp\,}\{\varphi(g), \varphi(h)\}$.

The mapping, in the paper called $f$\textit{-image}, is defined as follows: The $f$-image of a bispecial triplet $((a,b), w, (g,h))$ is the bispecial triplet $((a',b'), u_1\varphi(w)u_2, (g',h'))$ if the edges starting in $(a,b)$ and $(g,h)$ end in $(a',b')$ and $(g',h')$ with labels $u_1$ and $u_2$, respectively. 
\begin{example}
    For $d = 5$, $((0,4), \varepsilon, (2,4))$ is a bispecial triplet as $02, 04$ and $44$ are factors.
    Its first three $f$-images are
    \begin{align*}
        f\text{-image:} \quad & ((1,4), 0, (3,4)) \\
        f^2\text{-image:} \quad & ((2,4), \varphi(0)04, (0,4)) \\
        f^3\text{-image:} \quad & ((3,4), \varphi^2(0)\varphi(0)\varphi(4)0, (1,4))\,.
    \end{align*}
\end{example}
The set of initial bispecial triplets corresponds to, as proved in~\cite{K2012}, the set of bispecial factors without a~synchronization point.
Hence, in our case, the initial triplets are: \\
$((0, d-1), d-1, (0, d-1))$ and $((a,b), \varepsilon, (g,h))$ for admissible letters $a < b$ and $g < h$, given in~\eqref{eq:list2factors}.

Let us observe a useful property of bispecial triplets.
\begin{lemma}\label{lem:vzdy_d-1} 
Let $((a,b), w, (g,h))$ be a bispecial triplet of $\uu$.
\begin{itemize}
\item If $b<d-1$, then $((a,d-1), w, (g,h))$ is a bispecial triplet and the bispecial factors associated with the $f$-images of $((a,b), w, (g,h))$ and of $((a,d-1), w, (g,h))$ coincide.
\item If $h<d-1$, then $((a,b), w, (g,d-1))$ is a bispecial triplet and the bispecial factors associated with the $f$-images of $((a,b), w, (g,h))$ and of $((a,b), w, (g,d-1))$ coincide.
\end{itemize}
\end{lemma}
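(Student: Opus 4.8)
The plan is to prove the first bullet (the second is symmetric by reversal, using that the language of $\uu$ is closed under reversal, Item~4 in Preliminaries). So assume $((a,b),w,(g,h))$ is a bispecial triplet with $b<d-1$. First I would recall what "bispecial triplet" requires: $a<b$, $g<h$, and at least one of $awg,awh$ and at least one of $bwg,bwh$ lies in $\mathcal{L}(\uu)$. The key structural point to exploit is Corollary~\ref{coro:d-1}: the letter $d-1$ is always both a left and a right extension of \emph{every} bispecial factor of $\uu$. Since $w$ is bispecial (it is the middle of a bispecial triplet), $d-1$ is a left extension of $w$, i.e.\ $(d-1)w\in\mathcal{L}(\uu)$, and combining with the fact that at least one of $wg,wh$ extends $w$ on the right, at least one of $(d-1)wg$, $(d-1)wh$ is in $\mathcal{L}(\uu)$. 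Together with $a<b<d-1\le d-1$, this shows $((a,d-1),w,(g,h))$ satisfies all the defining conditions of a bispecial triplet (the pair $(a,d-1)$ is still ordered since $a<b<d-1$, and at least one of $awg,awh$ was already a factor, at least one of $(d-1)wg,(d-1)wh$ is a factor as just argued).

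Next I would show the $f$-images have the same associated bispecial factor. By the definition of the $f$-image, the middle word of the $f$-image of $((a,b),w,(g,h))$ is $u_1\varphi(w)u_2$, where $u_1={\rm lcs}\{\varphi(a),\varphi(b)\}$ and $u_2={\rm lcp}\{\varphi(g),\varphi(h)\}$ are read off the left/right extension graphs \eqref{eq:left}, \eqref{eq:right}. The right component $u_2$ is unchanged since the right part $(g,h)$ of the triplet is untouched. So I only need: ${\rm lcs}\{\varphi(a),\varphi(b)\}={\rm lcs}\{\varphi(a),\varphi(d-1)\}$ whenever $a<b<d-1$. This is a direct computation from \eqref{eq:morphismFi}: for any letter $c\le d-2$ we have $\varphi(c)=0(c{+}1)$ ending in $c+1$, while $\varphi(b)$ ends in $b+1$ and $\varphi(d-1)=0(d{-}1)(d{-}1)$ ends in $d-1$; since $a<d-2$ forces $\varphi(a)$ to end in $a+1\le d-2\ne$ the last letter of either $\varphi(b)$ or $\varphi(d-1)$, both longest common suffixes are the empty word, so they agree. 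If instead $a=d-2$, then $b$ would have to satisfy $d-2<b<d-1$, impossible, so this subcase is vacuous — hence $a\le d-3$ always and the lcs is $\varepsilon$ in both cases. Thus $u_1$ is the same for both triplets, the middle words of the two $f$-images coincide, and therefore so do the associated bispecial factors.

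The main (and only mildly delicate) obstacle is bookkeeping the edge-label cases in \eqref{eq:left}: one must make sure that replacing $b$ by $d-1$ does not push us onto the third edge type $(d-2,d-1)\overset{d-1}{\to}(0,d-1)$, which would change the label. But this cannot happen precisely because $a<b<d-1$ already forces $a\le d-3$, so the relevant edge out of $(a,b)$ is of the first type (label $\varepsilon$) and the edge out of $(a,d-1)$ with $a\le d-3$ is of the second type (also label $\varepsilon$); in both cases the label is $\varepsilon$, matching. Once this case check is done, everything else is immediate from the definitions and from Corollary~\ref{coro:d-1}. The second bullet follows by applying the first to the reversed sequence, or equivalently by the symmetric computation with ${\rm lcp}$ in place of ${\rm lcs}$ using \eqref{eq:right}.
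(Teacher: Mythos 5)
Your overall route is the same as the paper's (show the modified triplet is still bispecial, then check that replacing $b$ by $d-1$ does not change the edge label in \eqref{eq:left}), and your computation of the labels is correct: since $a<b<d-1$ forces $a\le d-3$, both relevant edges carry the label $\varepsilon$, so the middle words of the two $f$-images agree. The reduction of the second bullet to the first by closure under reversal is also fine.

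However, there is a genuine gap in the step where you show that $((a,d-1),w,(g,h))$ is a bispecial triplet. You argue: $(d-1)w\in\mathcal{L}(\uu)$ by Corollary~\ref{coro:d-1}, and at least one of $wg,wh$ is a factor, therefore at least one of $(d-1)wg,(d-1)wh$ is a factor. This inference is invalid: left and right extensions of a factor are not independent, and knowing that $(d-1)w$ and $wg$ are both factors does not imply that $(d-1)wg$ is a factor. The issue is real in the case at hand: by Proposition~\ref{prop:3extensions}, the hypothesis that $a<b<d-1$ are both left extensions forces $w=F_i$ with $i\le d-3$, and then $w$ has $d-i\ge 3$ right extensions while $(d-1)w$ is followed \emph{only} by the letters $i$ and $d-1$. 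So you must additionally argue that $g$ or $h$ lies in $\{i,d-1\}$; this does hold, but only by combining the complete list $\mathrm{Bext}(w)=\{(d-1)w(d-1)\}\cup\{iwk,\,kwi : i<k\le d-1\}$ from Proposition~\ref{prop:3extensions} with the assumption that $((a,b),w,(g,h))$ is itself a bispecial triplet with $b<d-1$ (if $a>i$, then one of $awg,awh$ being a factor forces $i\in\{g,h\}$; if $a=i$, then the condition on $b$ forces $i\in\{g,h\}$). This is exactly why the paper's proof invokes the full extension list of Proposition~\ref{prop:3extensions} rather than Corollary~\ref{coro:d-1} alone; with that list in hand the rest of your argument goes through.
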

\begin{proof}
Let us prove the first statement, the second one is analogous. 
If $a$ and $b$, such that $a<b<d-1$, are left extensions of $w$, then by Proposition~\ref{prop:3extensions}, $w=F_i$ for some $i=0,1,\ldots, d-3$ and the both-sided extensions of $w$ are: $(d-1)w(d-1), iwk, kwi$, where $ k =i+1, i+2, \ldots, d-1$. Hence $((a,d-1), w, (g,h))$ is a bispecial triplet, too, and by definition of $f$,  the $f$-image of $((a,b), w, (g,h))$ equals $((a+1, b+1), \varphi(w)u_2, (g',h'))$ and the $f$-image of $((a,d-1), w, (g,h))$ equals $((a+1,d-1), \varphi(w)u_2, (g',h'))$ for $u_2={\rm lcp\,}\{\varphi(g),\varphi(h)\}$. Consequently, the associated bispecial factor $\varphi(w)u_2$ is the same in both cases. 
\end{proof}

By Lemma~\ref{lem:vzdy_d-1}, when identifying all bispecial factors, it suffices to consider bispecial triplets of the form $((a, d-1), w, (g, d-1))$. We can thus significantly simplify the notation for bispecial triplets and $f$-images without losing any information.

Let us set
\[
    \mathcal{T} = \{(a,w,b): w \text{ bispecial factor of } \uu, \   a,b \in \mathcal{A},\ a,b < d-1,\ aw, wb \in \mathcal{L}(\uu)\}\,.
\]
Obviously, every bispecial factor $w$ occurs as a middle element of a triplet $T$ from  $\mathcal{T}$. 
We say that $w$ is a bispecial factor associated with the triplet $T$.

Analogously, we can simplify the definition of $f$-image to be applicable on triplets from $\mathcal{T}$:
\begin{defi}\label{def:Zobrazeni_f} 
For $(a,w,b)\in  \mathcal{T}$ set $f(a,w,b) = (a',w',b')$, where 
$ a'=a+1 \!\!\!\mod (d-1)$,  $b' = b+1 \!\!\!\mod (d-1)$ and 
\[
  w' =\left\{\begin{array}{ll} \phantom{(d-1)}\varphi(w)0& \text{ if } a<d-2 \text{ and } b<d-2\,;\\
  \phantom{(d-1)}\varphi(w)0(d-1)\phantom{(d-1)}& \text{ if } a<d-2 \text{ and } b=d-2\,;\\

  (d-1)\varphi(w)0& \text{ if } a=d-2 \text{ and } b<d-2\,;\\

  (d-1)\varphi(w)0(d-1)& \text{ if } a=d-2 \text{ and } b=d-2\,.\\
  \end{array} \right.
\]
We call the bispecial factor $w'$ the \emph{offspring} of $w$. 
\end{defi}
Notice that only the bispecial factors $F_i$ for $i=0,1,\ldots, d-3$ may have more offsprings since they make part of more bispecial triplets. 

\begin{example}\label{ex:iteraceEpsilonu} In the above notation, $(k,F_k,k) = f^k(0,\varepsilon, 0)$ for $k =0,1,\ldots, d-2$. 
\end{example}
\begin{lemma}\label{lem:vlastnostif} The mapping $f$  has the following properties:

\begin{enumerate}
    \item  $f: \mathcal{T} \mapsto \mathcal{T}$ is injective.

    \item Let $w'$ be a bispecial factor of $\uu$ such that $w'$ has a synchronization point. Then there exists $a,b \in \mathcal{A}, a,b < d-1$  and a bispecial factor $w$ such that $w'$ is the bispecial factor associated with $f(a,w,b)$.   
    \item   Let  $w'$ be the offspring of  a bispecial  factor $w\notin \{F_i:i=0,1,\ldots, d-3\}$. Then  
 $\#\mathrm{Bext}(w) = \#\mathrm{Bext}(w')$.    
\end{enumerate}
    
\end{lemma}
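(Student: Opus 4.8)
The plan is to treat the three items separately, since they rely on different ingredients. For item~(1), injectivity of $f$, I would argue by inspection of Definition~\ref{def:Zobrazeni_f}: given the image $(a',w',b')$, the pair $(a',b')$ determines $(a,b)$ uniquely via $a = a'-1 \bmod (d-1)$ and $b = b'-1 \bmod (d-1)$, and then the four cases in the definition of $w'$ are distinguished by the values of $(a,b)$ relative to $d-2$, so in each case $w'$ is obtained from $\varphi(w)$ by prepending and/or appending fixed letters; stripping those off recovers $\varphi(w)$, and injectivity of $\varphi$ (listed as the first basic property of $\varphi_d$) recovers $w$. One small point to check is that the $w'$ arising in different cases cannot coincide accidentally in a way that would collide with the bookkeeping, but since the pair $(a',b')$ already pins down the case, this is automatic.

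For item~(2), I would invoke the result of Klouda~\cite{K2012} quoted in the excerpt: every bispecial factor is the middle element of a bispecial triplet, and every bispecial triplet is either an \emph{initial} one (equivalently, its associated bispecial factor has no synchronization point) or is the $f$-image of another bispecial triplet. Combining this with Lemma~\ref{lem:vzdy_d-1}, which lets us pass to triplets of the reduced form in $\mathcal{T}$, and with the fact that the simplified $f$ on $\mathcal{T}$ encodes the same $f$-image operation: if $w'$ has a synchronization point, then the (reduced) triplet associated to $w'$ is not initial, hence equals $f$ applied to some $(a,w,b)\in\mathcal{T}$, and $w$ is the required bispecial factor. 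I would also note here the exceptional status of $d-1$: the only triplet whose middle factor has no synchronization point among the ``long'' ones is $((0,d-1),d-1,(0,d-1))$, together with the length-$0$ initial triplets, which matches the claim that the exception is exactly the factors $F_i$.

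For item~(3), I would compute $\#\mathrm{Bext}(w')$ directly from the two directed graphs \eqref{eq:left} and \eqref{eq:right}. If $w\notin\{F_i : 0\le i\le d-3\}$, then by Corollary~\ref{coro:d-1} and Proposition~\ref{prop:3extensions} the factor $w$ has exactly two left extensions and exactly two right extensions, one of each being $d-1$; thus $\mathrm{Bext}(w)$ has at most $4$ elements, and its size is determined by which of the four words $awg$, $awh$, $bwg$, $bwh$ are factors. The point is that $f$-image, read through the edge labels, transports $\mathrm{Bext}(w)$ faithfully: an extension letter on the left of $w$ corresponds to a unique extension letter on the left of $w'=u_1\varphi(w)u_2$ (namely the tail of the corresponding edge in the left graph), and similarly on the right; moreover by Lemma~\ref{lem:Neni}-type constraints no two distinct both-sided extensions of $w$ can map to the same both-sided extension of $w'$, and conversely every both-sided extension of $w'$ comes from one of $w$ (this is again the content of the synchronization/preimage discussion: $w'$ begins and ends appropriately, so a both-sided extension $iw'j$ has a $\varphi$-preimage giving a both-sided extension of $w$). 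Hence the map $\mathrm{Bext}(w)\to\mathrm{Bext}(w')$ induced by $f$ is a bijection, giving $\#\mathrm{Bext}(w)=\#\mathrm{Bext}(w')$.

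The main obstacle I anticipate is item~(3), specifically verifying carefully that the correspondence between $\mathrm{Bext}(w)$ and $\mathrm{Bext}(w')$ is a genuine bijection rather than just a surjection or injection. One has to rule out, on the one hand, that two distinct extensions of $w$ produce the same extension of $w'$ (which is where the ordering of letters and an argument in the spirit of Lemma~\ref{lem:Neni} enters, since the left-graph and right-graph edges are ``almost'' injective but the vertex $(d-2,d-1)$ wraps around), and on the other hand that no ``new'' both-sided extension of $w'$ appears that is not the image of one of $w$ — this uses that $w'$ both begins and ends with $0$ (or with the controlled letters $d-1$ prepended/appended), so that a two-sided extension of $w'$ can be desubstituted. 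The exclusion of the $F_i$ is exactly what guarantees $w$ has only two one-sided extensions on each side, keeping $\mathrm{Bext}(w)$ small enough that this case analysis is finite and clean.
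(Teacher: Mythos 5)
Your proposal is correct and follows essentially the same route as the paper: Item~1 via injectivity of $\varphi$ together with the fact that $(a',b')$ pins down which letters were prepended/appended, Item~2 by deferring to Klouda's result in \cite{K2012}, and Item~3 by matching the (at most four) both-sided extensions of $w$ bijectively with those of $w'$, which is precisely the paper's list of four equivalences $awb\in\mathcal{L}(\uu)\Leftrightarrow a'w'b'\in\mathcal{L}(\uu)$, etc. The only cosmetic differences are that you invoke a Lemma~\ref{lem:Neni}-type argument for injectivity of the extension correspondence where none is needed (the images $a'$ and $d-1$ of the two left extensions are automatically distinct since $a'<d-1$), and the paper secures the surjectivity step by first observing $w'\neq F_i$, so that $w'$ has exactly two extensions on each side and is associated with a unique triplet.
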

\begin{proof} Item 1  follows  from injectivity of  $\varphi$  and the fact that  $w'=s\varphi(w)p$, where $s$ is the longest common suffix of $\varphi(a) = 0(a+1) =0 a'$ and  $\varphi(d-1) = 0(d-1)(d-1)$ and  analogously, $p$  is the longest common prefix of $\varphi(b) = 0(b+1)=0b'$ and  $\varphi(d-1) = 0(d-1)(d-1)$. 
In other  words, the triplet $(a',w',b')$ determines a~unique triplet $(a,w,b)$ -- the preimage of  $(a',w',b')$ by $f$. 

\medskip

Item 2 is a special case of the result proven in \cite{K2012}.

\medskip
By Proposition~\ref{prop:3extensions} and Example \ref{ex:iteraceEpsilonu}, we have $w' \neq F_i$ for $i =0,1,\ldots, d-2$ and $w'$ has two left extensions, say  $a' $ and $d-1$, and two right extensions, say  $b'$ and $d-1$.  In other words, $w'$ is associated with only one triplet $(a',w',b')\in \mathcal{T}$.  By Item 2 and since $w\not= F_i$ for $i=0,1,\ldots, d-3$, there exists only one triplet $(a,w,b) \in \mathcal{T}$ such that $w'$ is the offspring of $w$.  Obviously,   $\mathrm{Bext}(w') \subset \{a'w'b', a'w'(d-1), (d-1)w'b',  (d-1)w(d-1)\}$.  The form of $f$ implies the following simple observation which proves Item 3:  
$$\begin{array}{rcl}
awb \in {\mathcal L}(\uu)& \Leftrightarrow & a'w'b' \in {\mathcal L}(\uu)\,;\\
aw(d-1) \in {\mathcal L}(\uu)& \Leftrightarrow & a'w'(d-1) \in {\mathcal L}(\uu)\,;\\
(d-1)wb \in {\mathcal L}(\uu)& \Leftrightarrow& (d-1)w'b' \in {\mathcal L}(\uu)\,;\\
(d-1)w(d-1) \in {\mathcal L}(\uu)& \Leftrightarrow & (d-1)w'(d-1) \in {\mathcal L}(\uu)\,.\\
\end{array}
$$  
\end{proof}

To simplify description of all bispecial factors, let us show that the offsprings of a~bispecial factor and of its mirror image are again mirror images of each other. 
\begin{lemma}\label{lem:reversal}
Let $(a,w,b) \in \mathcal T$. If $f(a,w,b)=(a', w', b')$, then $f(b, \overline{w}, a)=(b', \overline{w'}, a')$. 
\end{lemma}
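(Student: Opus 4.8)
The statement is a symmetry property: applying $f$ commutes with the reversal operation $w \mapsto \overline{w}$ on triplets, where reversing a triplet means $(a,w,b) \mapsto (b,\overline{w},a)$. The plan is to unwind the definition of $f$ (Definition~\ref{def:Zobrazeni_f}) on both sides and check that the two outputs agree. The key algebraic fact to exploit is the second property of $\varphi$ listed in the Preliminaries: if $v$ is a palindrome then $\varphi(v)0$ is a palindrome; more precisely, I expect the relevant identity to be $\overline{\varphi(v)0} = \varphi(\overline{v})0$ for \emph{every} word $v$ (not just palindromes), which follows from the special shape of $\varphi_d$ --- each $\varphi_d(i)$ starts with $0$, so $\varphi_d(v)0$ has the form $0 v_0' 0 v_1' 0 \cdots 0 v_{n-1}' 0$ and reversing it swaps this into $0 \overline{v}$'s image of the same type. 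I would first record this identity as the technical heart of the argument.

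\textbf{Main steps.} First I would note that $f(b,\overline{w},a)$ is defined because $(b,\overline{w},a) \in \mathcal{T}$: indeed $\overline{w}$ is a bispecial factor since $\mathcal{L}(\uu)$ is closed under reversal, and $b\overline{w}, \overline{w}a \in \mathcal{L}(\uu)$ because $w b, a w \in \mathcal{L}(\uu)$ combined with closure under reversal. Second, for the first coordinate: $f$ sends the left-letter $a$ to $a' = a+1 \bmod (d-1)$ and the right-letter $b$ to $b' = b+1 \bmod (d-1)$, so the reversed triplet $(b,\overline{w},a)$ gets sent to left-letter $b+1 \bmod (d-1) = b'$ and right-letter $a+1 \bmod (d-1) = a'$, matching the claim. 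Third, for the middle word: I would go through the four cases of Definition~\ref{def:Zobrazeni_f}. When $a < d-2$ and $b < d-2$, we have $w' = \varphi(w)0$, and for the reversed triplet both coordinates are still $< d-2$, so its image has middle word $\varphi(\overline{w})0 = \overline{\varphi(w)0} = \overline{w'}$ by the identity above. When $a < d-2$ and $b = d-2$, $w' = \varphi(w)0(d-1)$; the reversed triplet $(b,\overline{w},a)$ has first coordinate $b = d-2$ and second coordinate $a < d-2$, so it falls in the ``$(d-1)\varphi(\cdot)0$'' case, giving middle word $(d-1)\varphi(\overline{w})0 = \overline{\varphi(w)0(d-1)} = \overline{w'}$; here I use $\overline{(d-1)\varphi(\overline w)0} = \overline{0}\,\overline{\varphi(\overline w)}\,\overline{(d-1)} = 0\,\varphi(w)\,(d-1)$ after applying the reversal identity. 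The case $a = d-2$, $b < d-2$ is symmetric, and the case $a = b = d-2$ is immediate since $w' = (d-1)\varphi(w)0(d-1)$ is ``self-symmetric'' in shape and reduces to $\overline{(d-1)\varphi(\overline w)0(d-1)} = (d-1)\varphi(w)0(d-1)$. Combining the three coordinates gives $f(b,\overline{w},a) = (b',\overline{w'},a')$.

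\textbf{Anticipated obstacle.} The only real content is the reversal identity $\overline{\varphi(v)0} = \varphi(\overline{v})0$, and keeping the bookkeeping straight in the mixed cases (where one endpoint is $d-2$ and the other is not) where the prefix/suffix markers $(d-1)$ swap sides upon reversal. This is entirely routine once the identity is in hand; there is no genuine difficulty, just care with the case analysis. I would present the reversal identity as a short preliminary computation and then dispatch the four cases compactly.
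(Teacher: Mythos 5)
Your proposal is correct and follows essentially the same route as the paper: the entire content is the identity $\overline{\varphi(w)0}=\varphi(\overline{w})0$ (which the paper derives from $\varphi(k)0=0\overline{\varphi(k)}=\overline{\varphi(k)0}$ for each letter $k$, exactly your observation about the shape of $\varphi_d$), after which the swap of the endpoint letters and the four cases of Definition~\ref{def:Zobrazeni_f} are routine. You merely spell out the case analysis that the paper leaves implicit.
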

\begin{proof}
If $(a,w,b)$ is a bispecial triplet, then by closedness of ${\mathcal L}(\uu)$ under reversal, $(b,\overline{w}, a)$ is a bispecial triplet, too.
We have $f(a,w,b)=(a',w',b')$ and $f(b, \overline{w}, a)=(b', w'', a')$, where $a', b', w', w''$ are given in Definition~\ref{def:Zobrazeni_f}. 
The  definition also guarantees that $w''=\overline{w'}$ if  $\varphi(\overline{w})0=\overline{\varphi(w)0}.$ This simply follows from the equality $\varphi(k)0=0\overline{\varphi(k)}=\overline{\varphi(k)0}$ for all $k \in \mathcal A$.


\end{proof}
 Lemma~\ref{lem:vlastnostif} and Lemma~\ref{lem:reversal} imply the following corollary.
\begin{corollary}\label{coro:rodinyBispecialu} 
    For $k=0,1, \ldots, d-2$ denote $T_{k} = (0, \varepsilon, k)$ and $T_{d-1} = (0, d-1, 0)$. Then for each bispecial factor $w$ of $\uu$ there exist $k \in \mathcal{A} \text{ and } n\in \mathbb{N} $ such that $w$ or $\overline{w}$ is the bispecial factor associated with $ f^n({T}_k)$.  
\end{corollary}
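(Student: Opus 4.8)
The goal is to show every bispecial factor of $\uu$ lies (up to reversal) in the forward $f$-orbit of one of the $d$ triplets $T_0,\dots,T_{d-1}$. The natural strategy is strong induction on the length of a bispecial factor $w$, using the machinery already set up: the "initial triplets are exactly the synchronization-point-free bispecial factors" result from~\cite{K2012}, Proposition~\ref{prop:3extensions}, and Lemma~\ref{lem:vlastnostif}(2). I would proceed as follows.

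First, dispose of the two special families. By Corollary~\ref{coro:d-1} and Example~\ref{ex:iteraceEpsilonu}, the factors $F_0,F_1,\dots,F_{d-3}$ are exactly $f^k(0,\varepsilon,0) = f^k(T_0)$ for $k=0,\dots,d-3$ (reading off the middle component), so they are covered by $T_0$; one also checks $F_{d-2}=f^{d-2}(T_0)$, which is bispecial only in the "two extensions on each side" sense but still sits in the orbit. The factor $d-1$ is the middle of the initial triplet $T_{d-1}=(0,d-1,0)$ (after the Lemma~\ref{lem:vzdy_d-1} normalization). So assume now $w$ is a bispecial factor with $w \notin\{F_0,\dots,F_{d-3}\}$ and $w \neq d-1$; by Corollary~\ref{coro:d-1} such a $w$ has exactly two left and exactly two right extensions, hence by the normalization preceding Definition~\ref{def:Zobrazeni_f} it is associated with a unique triplet $(a,w,b)\in\mathcal T$ with $a,b<d-1$.

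Second, split on whether $w$ has a synchronization point. If $w$ is bispecial and has \emph{no} synchronization point, then by the characterization in~\cite{K2012} the associated triplet is one of the initial triplets; after the Lemma~\ref{lem:vzdy_d-1} reduction to the form $(a,d-1,\cdot)$ and noting (via the list~\eqref{eq:list2factors} and the form of $\mathrm{Bext}(\varepsilon)$) that every such initial triplet is exactly $T_k=(0,\varepsilon,k)$ for some $k$ up to reversal (swapping left/right reverses the role of $a$ and $b$, and Lemma~\ref{lem:reversal} then identifies the whole orbit), we get $w$ (or $\overline w$) equal to the bispecial factor of $f^0(T_k)=T_k$. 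If instead $w$ \emph{has} a synchronization point, then by Lemma~\ref{lem:vlastnostif}(2) there is a bispecial factor $v$ and letters $a,b<d-1$ with $w$ the offspring of $v$, i.e. $w=w'$ where $f(a,v,b)=(a',w',b')$. From Definition~\ref{def:Zobrazeni_f}, $|w| \geq |\varphi(v)| \geq 2|v| > |v|$ (using $v$ nonempty when $w$ is long enough; the finitely many short cases, where $v=\varepsilon$, land directly on $f^n(T_k)$), so the induction hypothesis applies to $v$: $v$ or $\overline v$ equals the bispecial factor associated with $f^n(T_k)$ for some $k,n$. If $v$ itself is that factor, apply $f$ once more to get $w$ associated with $f^{n+1}(T_k)$, using that $f$ is a well-defined injective map on $\mathcal T$ (Lemma~\ref{lem:vlastnostif}(1)) so the offspring relation is faithfully tracked. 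If it is $\overline v$ that sits in the orbit, use Lemma~\ref{lem:reversal}: $f(b,\overline v,a)=(b',\overline{w'},a')$, so $\overline w$ is the offspring of $\overline v$ and hence associated with $f^{n+1}(\overline T_k)$; since $\overline{T_k}$ is again (up to the trivial left–right swap) a triplet of the form $T_j$ or its reverse, this still has the required form.

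The main obstacle I anticipate is bookkeeping at the "offspring" step, precisely because the distinguished factors $F_0,\dots,F_{d-3}$ have \emph{several} offsprings and the reduction from arbitrary bispecial triplets $((a,b),w,(g,h))$ to the simplified triplets $(a,w,b)\in\mathcal T$ must be invoked cleanly so that "the triplet associated with $w$" is genuinely unique for the $w$ under consideration — this is exactly why the proof first peels off $\{F_i\}$ and why Lemma~\ref{lem:vlastnostif}(2) is stated with the hypothesis $w\neq F_i$. A second, minor subtlety is the reversal accounting: one must check that the set $\{\overline{T_0},\dots,\overline{T_{d-1}}\}$ contributes nothing new, i.e. each $\overline{T_k}$ is, after swapping the left and right coordinate (which is legitimate since reversing a triplet swaps the two boundary letters), again of the form $T_j$; this is immediate from $T_k=(0,\varepsilon,k)$ having $\overline{T_k}=(k,\varepsilon,0)$ whose left–right swap is $(0,\varepsilon,k)=T_k$, and $\overline{T_{d-1}}=T_{d-1}$. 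Everything else is a routine induction on length.
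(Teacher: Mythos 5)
Your proposal is correct and follows essentially the same route as the paper, which derives the corollary directly from Lemma~\ref{lem:vlastnostif} (in particular Item~2, i.e.\ the descent to synchronization-point-free initial triplets via \cite{K2012}) together with Lemma~\ref{lem:reversal} for the reversal bookkeeping; your induction on length is just an explicit unfolding of that descent, and your identification of the initial triplets with the $T_k$ up to reversal matches the paper's setup. The only slight slip is attributing the hypothesis $w\neq F_i$ to Item~2 of Lemma~\ref{lem:vlastnostif} (it appears in Item~3), but since you correctly flag and handle the multiplicity of triplets through the factors $F_i$, this does not affect the argument.
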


Now we are ready to prove the main result of this section.
\begin{theorem}\label{thm:ordinaryBS}
All bispecial factors of $\uu_d$ are ordinary. 
\end{theorem}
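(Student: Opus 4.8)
The goal is to show that every bispecial factor $w$ of $\uu=\uu_d$ has bilateral order $\mathrm{b}(w)=0$. By Corollary~\ref{coro:rodinyBispecialu}, every bispecial factor is (up to reversal) associated with $f^n(T_k)$ for some $k\in\mathcal{A}$ and $n\in\mathbb{N}$, and by Lemma~\ref{lem:reversal} reversal preserves the number of both-sided extensions, so it suffices to check the members of the $d$ families $\{f^n(T_k)\}_{n\ge 0}$ for $k=0,1,\ldots,d-1$. The plan is to split these families into two groups: the "exceptional" early iterates that produce the factors $F_0,\ldots,F_{d-3}$ (which have more than two left or right extensions), and all the remaining bispecial factors, which have exactly two left and two right extensions.

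\textbf{Step 1: factors with exactly two left and two right extensions.} If $w$ is such a factor, then $\#\mathrm{Lext}(w)=\#\mathrm{Rext}(w)=2$, so $\mathrm{b}(w)=\#\mathrm{Bext}(w)-3$, and $\mathrm{b}(w)=0$ is equivalent to $\#\mathrm{Bext}(w)=3$, i.e.\ exactly one of the four words $a'w'b',\,a'w'(d-1),\,(d-1)w'b',\,(d-1)w'(d-1)$ is missing. By Corollary~\ref{coro:d-1}, the letter $d-1$ is always both a left and a right extension, so $(d-1)w'(d-1)\in\mathcal{L}(\uu)$; hence I must show that exactly one of $a'w'b'$, $a'w'(d-1)$, $(d-1)w'b'$ is absent. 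Using Lemma~\ref{lem:vlastnostif}(3), as long as we stay inside a family and away from the $F_i$'s, $\#\mathrm{Bext}$ is constant along the orbit of $f$; so it is enough to compute $\#\mathrm{Bext}$ once per family at a convenient representative, and separately to check that passing \emph{through} one of the exceptional triplets $f^k(0,\varepsilon,0)=(k,F_k,k)$ does not break ordinarity of the offspring. For the latter: the offspring of $F_i$ (for the triplet $(i,F_i,i)$) is $F_{i+1}$ when $i+1\le d-3$, which is handled in Step 2; and Proposition~\ref{prop:3extensions} together with Lemma~\ref{lem:vzdy_d-1} pins down exactly which of $iwk,\,kwi,\,(d-1)w(d-1)$ survive under $f$, so one checks by hand that the resulting offspring has $\#\mathrm{Bext}=3$. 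For the base cases, $T_{d-1}=(0,d-1,0)$: the factor $d-1$ has $\mathrm{Bext}$ equal to $\{0(d-1)0,\,0(d-1)(d-1),\,(d-1)(d-1)0\}$ (note $(d-1)(d-1)(d-1)\notin\mathcal{L}(\uu)$ since $\varphi(d-1)$ has only two consecutive $(d-1)$'s and $\varphi$ of anything else separates them by $0$), so $\#\mathrm{Bext}(d-1)=3$; for $T_k=(0,\varepsilon,k)$ with $k\ge 1$ one reads off $\mathrm{Bext}(\varepsilon)$ from the list~\eqref{eq:list2factors}, but restricted to the triplet $(0,\varepsilon,k)$ — i.e.\ we look at extensions by $\{0,d-1\}$ on the left and $\{k,d-1\}$ on the right — and again get exactly three admissible words, because $0\varepsilon k=0k\in\mathcal{L}(\uu)$, $0\varepsilon(d-1)=0(d-1)\in\mathcal{L}(\uu)$, $(d-1)\varepsilon(d-1)=(d-1)(d-1)\in\mathcal{L}(\uu)$, while $(d-1)\varepsilon k=(d-1)k\notin\mathcal{L}(\uu)$ for $1\le k\le d-1$. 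So every family starts at a triplet whose associated (bispecial, via Lemma~\ref{lem:vzdy_d-1}) factor is ordinary, and $f$ propagates $\#\mathrm{Bext}=3$.

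\textbf{Step 2: the exceptional factors $F_0,F_1,\ldots,F_{d-3}$.} These have more than two one-sided extensions, so they do not fall under the bookkeeping of Step 1 and must be treated directly. But Proposition~\ref{prop:3extensions} already gives their full both-sided extension set explicitly: $\mathrm{Bext}(F_i)=\{(d-1)F_i(d-1)\}\cup\{iF_i k,\,kF_i i : k=i+1,\ldots,d-1\}$. Counting: there is $1$ word $(d-1)F_i(d-1)$, there are $d-1-i$ words $iF_i k$, and $d-1-i$ words $kF_i i$, but the pair $k=d-1$ gives $iF_i(d-1)$ and $(d-1)F_i i$ which are distinct from each other and from $(d-1)F_i(d-1)$; so $\#\mathrm{Bext}(F_i)=1+2(d-1-i)$. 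Meanwhile $\mathrm{Lext}(F_i)=\{i,i+1,\ldots,d-1\}$ has $d-i$ elements and likewise $\#\mathrm{Rext}(F_i)=d-i$. Therefore
\begin{equation*}
\mathrm{b}(F_i)=\bigl(1+2(d-1-i)\bigr)-(d-i)-(d-i)+1 = 2d-2i-1-2d+2i+1 = 0.
\end{equation*}
So each $F_i$ is ordinary as well, and since every bispecial factor of $\uu_d$ is, up to reversal, either one of $F_0,\ldots,F_{d-3}$ or a factor with exactly two left and two right extensions handled in Step 1, all bispecial factors are ordinary.

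\textbf{Main obstacle.} The conceptual content is light — Proposition~\ref{prop:3extensions} and Lemma~\ref{lem:vlastnostif}(3) do the heavy lifting — but the genuine work is the careful case analysis in Step 1 verifying $\#\mathrm{Bext}=3$ for the base triplets and, crucially, checking that an $f$-orbit that \emph{passes through} an exceptional triplet $(k,F_k,k)$ with $k\le d-3$ emerges with an ordinary offspring; this requires matching the four-way case split in Definition~\ref{def:Zobrazeni_f} against the explicit extension list of Proposition~\ref{prop:3extensions}, and is where a sloppy argument would most easily hide a gap.
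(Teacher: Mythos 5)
Your overall architecture is the same as the paper's: handle the exceptional factors $F_0,\ldots,F_{d-3}$ directly (your Step 2 count $\mathrm{b}(F_i)=0$ is correct, and is in fact already asserted in Proposition~\ref{prop:3extensions}), and for everything else reduce, via Corollary~\ref{coro:rodinyBispecialu} and Lemma~\ref{lem:vlastnostif}(3), to verifying $\#\mathrm{Bext}=3$ for one representative per family. But Step 1 has two problems. First, the claim that ``every family starts at a triplet whose associated factor is ordinary, and $f$ propagates $\#\mathrm{Bext}=3$'' does not hold as stated: for every family $T_i$ with $i\le d-2$ the starting factor is $\varepsilon=F_0$, and the next several members of the orbit are $F_1,F_2,\ldots$ --- all exceptional, with $\#\mathrm{Bext}(F_j)=1+2(d-1-j)\neq 3$ in general. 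The ``three admissible words'' you count for $(0,\varepsilon,k)$ are extensions restricted to the triplet, not $\mathrm{Bext}(\varepsilon)$, and Lemma~\ref{lem:vlastnostif}(3) explicitly excludes the factors $F_j$, so this count cannot seed the propagation. The induction can only begin at the first member of each orbit that is not one of $F_0,\ldots,F_{d-3}$: for $T_{d-1}$ that is the factor $d-1$ itself (your computation there is fine), for $T_i$ with $1\le i\le d-2$ it is $F_{d-1-i}(d-1)$, and for $T_0$ it is $F_{d-2}$.

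Second, and this is the genuine gap: computing $\#\mathrm{Bext}$ of those first non-exceptional members is the substance of the theorem, and you defer it entirely to ``one checks by hand.'' This is precisely what the paper's three-case analysis carries out. For instance, $w^{(i)}=F_{d-1-i}(d-1)$ is the offspring of $v=F_{d-2-i}$; one takes the extension list of $v$ from Proposition~\ref{prop:3extensions}, namely $(d-1)v(d-1)$ together with $(d-2-i)vk$ and $kv(d-2-i)$ for $k=d-1-i,\ldots,d-1$, and checks that among the $\varphi$-images of these only three --- those of $(d-2-i)v(d-2)$, $(d-2-i)v(d-1)$ and $(d-1)v(d-1)$ --- contain $w^{(i)}$. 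Without this verification (and the analogous one showing $\#\mathrm{Bext}(F_{d-2})=3$ as the offspring of $F_{d-3}$), the proof is incomplete; you correctly identify this as the main obstacle but leave it unresolved.
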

\begin{proof} Let $w$ be a bispecial factor  of $\uu=\uu_d$. 
By Proposition  \ref{prop:3extensions}, if $w$ has more than two right or left extensions, then $w$ is ordinary. Assume that $w$ has exactly two left and two right extensions. Then $w$ is ordinary if and only if $\#\mathrm{Bext}(w) = 3$.

By Corollary \ref{coro:rodinyBispecialu}, every bispecial factor $w$ is associated with a triplet $f^n(T_i)$ for some $i= 0,1,\ldots, d-1$  and $n \in \N$. We denote $n_i$ the smallest index such that the bispecial factor $w^{(i)}$ associated with $f^{n_i}(T_i)$ does not belong to $\{F_0,F_1, \ldots, F_{d-3}\}$. 

If we show that $\#\mathrm{Bext}(w^{(i)}) = 3$, then by Lemma \ref{lem:vlastnostif} every bispecial factor $w$ associated with  $f^{n}(T_i)$ for  $n\geq n_i$ has $\#\mathrm{Bext}(w) = 3$ and thus $w$ is ordinary.

Consequently, to complete the  proof it is enough to verify that $\#\mathrm{Bext}(w^{(i)}) = 3$ for every $i =0,1,\ldots, d-1$. We will discuss several cases. 

\begin{description}
    \item[{\bf Case} $i=d-1$.]\ \\ 
    In this case $n_{d-1}=0$ as the initial bispecial factor  $d-1\notin \{F_0,F_1, \ldots, F_{d-3}\}$.  
    By inspection of all  factors of $\uu$ of length 3 we can see that 
    
    \smallskip
    \centerline{$\mathrm{Bext}(w^{(d-1)})=\mathrm{Bext}(d-1) = \{0(d-1)(d-1), (d-1)(d-1)0, 0(d-1)0 \}$,} 
    \noindent hence $\#\mathrm{Bext}(w^{(d-1)}) = 3$, as desired. 

 \item[{\bf Case} $i =1,\ldots, d-2$.]\ \\ 
 In this case $n_i =d-1-i$ and $w^{(i)} = F_{d-1-i}(d-1)$. 
 Moreover, $w^{(i)}$ is the offspring of $v=F_{d-2-i}$. By Proposition   \ref{prop:3extensions},  the extensions of $v$ are:  
 $$(d-1)v(d-1), \  (d-2-i)vk \ \text{and} \   kv(d-2-i) \  \text{with} \ k=d-1-i, d-i, \ldots,d-1.$$  
 
 Among images of these extensions, only  $$\varphi\bigl((d-2-i)v(d-2)\bigr), \quad \varphi\bigl((d-2-i)v(d-1)\bigr) \quad \text{and} \quad  \varphi\bigl((d-1)v(d-1)\bigr)$$  
 contain $w^{(i)} = F_{d-1-i}(d-1)$  as its factor. Hence $\#\mathrm{Bext}(w^{(i)}) = 3$, too.  

  \item[{\bf Case} $i =0$.]\ \\   
  In this case $n_0=d-2$ and $w^{(0)}= F_{d-2}$. Moreover,   $w^{(0)}$ is the offspring of  $v= F_{d-3}$.  Using  the list of extensions of $v$ from Proposition \ref{prop:3extensions},  we deduce 

  \centerline{$\mathrm{Bext}(w^{(0)}) = \{(d-2)w^{(0)}(d-1), (d-1)w^{(0)}(d-2), (d-1)w^{(0)}(d-1)\}$}

  \noindent and again $\#\mathrm{Bext}(w^{(0)})=3$. 
\end{description}
    \end{proof}

As the language of $\uu$ is closed under reversal, Theorem~\ref{thm:ordinaryBS} and Theorem~\ref{thm:richness_ordinaryBS} guarantee richness of $\uu$. 
\begin{theorem}\label{coro:richness}
The fixed point $\uu_d$ of the morphism $\varphi_d$ defined in \eqref{eq:morphismFi} is rich.
\end{theorem}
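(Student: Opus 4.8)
The plan is to deduce Theorem~\ref{coro:richness} immediately from the two results already in hand: Theorem~\ref{thm:richness_ordinaryBS} (Corollary 5.10 of \citep{BaPeSt2010}) and Theorem~\ref{thm:ordinaryBS}. Recall that Theorem~\ref{thm:richness_ordinaryBS} states that a sequence whose language is closed under reversal and all of whose bispecial factors are ordinary is rich. So the proof is essentially a one-line verification that $\uu_d$ meets both hypotheses.

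First I would check that $\mathcal{L}(\uu_d)$ is closed under reversal. This is recorded as Item~4 in the list of basic properties of $\varphi_d$ in the Preliminaries: $\uu_d$ contains infinitely many palindromes (Item~2, because $\varphi_d(w)0$ is a palindrome whenever $w$ is) and $\uu_d$ is uniformly recurrent (Item~3, being a fixed point of the primitive morphism $\varphi_d$); the closure under reversal then follows from the general fact, also stated in the Preliminaries, that a uniformly recurrent sequence containing infinitely many palindromic factors has a language closed under reversal.

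Second I would invoke Theorem~\ref{thm:ordinaryBS}, which asserts that every bispecial factor of $\uu_d$ is ordinary, i.e.\ has bilateral order $0$. With both hypotheses of Theorem~\ref{thm:richness_ordinaryBS} verified, that theorem yields that $\uu_d$ is rich, which is exactly the assertion of Theorem~\ref{coro:richness}. I would also note that the case $d=2$ is covered separately: as remarked after the statement of Theorem~\ref{thm:main}, $\uu_2 = 0\mathbf{f}$ with $\mathbf{f}$ the Fibonacci sequence, which is Sturmian and hence rich, so the restriction $d\geq 3$ made throughout this section loses no generality.

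There is essentially no obstacle here, since all the real work has been done in Section~\ref{sec:BSfactors} (in particular in Proposition~\ref{prop:3extensions}, Lemma~\ref{lem:vlastnostif}, Corollary~\ref{coro:rodinyBispecialu} and Theorem~\ref{thm:ordinaryBS}); the hardest part was proving Theorem~\ref{thm:ordinaryBS}, and given that, the present statement is a direct corollary. The only thing to be careful about is to cite the hypotheses of Theorem~\ref{thm:richness_ordinaryBS} in the correct form — closure under reversal and ordinariness of \emph{all} bispecial factors — and to make sure the $d=2$ edge case is acknowledged.
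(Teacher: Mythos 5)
Your proposal is correct and follows exactly the paper's argument: verify closure of $\mathcal{L}(\uu_d)$ under reversal via the Preliminaries, invoke Theorem~\ref{thm:ordinaryBS} for ordinariness of all bispecial factors, and conclude by Theorem~\ref{thm:richness_ordinaryBS}. Your extra remark on the $d=2$ case is a sensible addition but not needed, since the paper restricts the statement's surrounding analysis to $d\geq 3$ and treats $\uu_2$ separately in the Introduction.
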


For description of the critical exponent, we need to know the length of bispecial factors. Obviously, it suffices to know their Parikh vectors. 
The definition of the mapping $f$ enables us to describe the relation between the Parikh vector of $w$ and of its offspring. For the vectors of the canonical basis of $\mathbb{R}^d$, we use the notation $e_0, e_1, \ldots, e_{d-1}$. 

\begin{proposition}\label{pro:ParikhBS} 

Let $k \in \mathcal{A}$ be fixed. 
For each $n \in \mathbb{N}$ we denote $w_n$ the bispecial factor associated with $f^n(T_{d-1-k}) $ and $\vec{w}_n$ the Parikh vector of $w_n$. 
Then $\vec{w}_{n+1} = M \vec{w}_n + \ell^{(k)}_n$, where  $M$ is the incidence matrix of the morphism $\varphi$  and  $\ell^{(k)}_n\in \mathbb{R}^d$ is defined:

\begin{description}
    \item[for $k =1,2,\ldots, d-2$]  as \ \ 
$$ \ell^{(k)}_n = \left\{\begin{array}{ll}
e_0+e_{d-1}& \text{ if }\ n =k-1  \!\!\!\mod (d-1)\  \ \text{or}  \ \ \ n=d-2 \!\!\!\mod(d-1)\,;\\
e_0 &\text{  otherwise}\,;
\end{array} \right.
$$
\item[for $k =0$ and $k =d-1$] as \ \ 
 $$ \ell^{(k)}_n = \left\{\begin{array}{ll}
e_0+2e_{d-1}& \text{ if }\ n=d-2 \!\!\!\mod(d-1)\,;\phantom{\  \ \text{or}  \ \ \ n=d-2 \!\!\!\mod(d-1)} \\
e_0 &\text{  otherwise}\,.
\end{array} \right.
$$   
\end{description}
\end{proposition}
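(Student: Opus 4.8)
The plan is to unwind the recursive definition of $f$ (Definition~\ref{def:Zobrazeni_f}) and translate each of the four cases directly into a statement about Parikh vectors. Recall that if $w' $ is the offspring of $w$ — that is, $(a',w',b') = f(a,w,b)$ — then $w'$ equals one of $\varphi(w)0$, $\varphi(w)0(d-1)$, $(d-1)\varphi(w)0$, $(d-1)\varphi(w)0(d-1)$, depending on whether $a<d-2$ or $a=d-2$ and whether $b<d-2$ or $b=d-2$. Since $\vec{\varphi(w)} = M\vec{w}$ by the Parikh-vector relation recalled in Preliminaries, we get in all four cases $\vec{w'} = M\vec{w} + e_0 + c(d-1)\,e_{d-1}$, where $c(d-1)\in\{0,1,2\}$ counts how many of $a,b$ equal $d-2$. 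So the whole content of the proposition is to track, along the orbit $w_n$ associated with $f^n(T_{d-1-k})$, the two coordinates of the triplet and detect exactly when each of them hits the value $d-2$.

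The key observation is that in Definition~\ref{def:Zobrazeni_f} the first and last coordinates of a triplet evolve independently and both by the rule $x\mapsto x+1 \bmod (d-1)$. First I would pin down the initial triplets: for $k=1,\dots,d-2$, Corollary~\ref{coro:rodinyBispecialu} gives $T_{d-1-k} = (0,\varepsilon,d-1-k)$, so $w_0$ has left-coordinate $0$ and right-coordinate $d-1-k$ (note $d-1-k \le d-2$, so this is a legitimate element of $\mathcal{T}$ after the identification, and if $k=1$ the right-coordinate starts at $d-2$). Iterating $f$, the left-coordinate of $w_n$ is $n \bmod (d-1)$ and the right-coordinate is $(n + d-1-k)\bmod(d-1) = (n-k)\bmod(d-1)$. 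Therefore the transition from $w_n$ to $w_{n+1}$ uses the "big" jump on the left when the left-coordinate of $w_n$ equals $d-2$, i.e. when $n \equiv d-2 \pmod{d-1}$, and on the right when the right-coordinate of $w_n$ equals $d-2$, i.e. when $n-k \equiv d-2 \pmod{d-1}$, equivalently $n \equiv k-1 \pmod{d-1}$. Summing these two indicator contributions gives exactly $\ell_n^{(k)} = e_0 + (\mathbf{1}[n\equiv k-1] + \mathbf{1}[n\equiv d-2])\,e_{d-1}$, which is the stated formula for $k=1,\dots,d-2$: when $k-1 \not\equiv d-2$ the two events are disjoint and each contributes one $e_{d-1}$, while generically neither occurs and $\ell_n^{(k)}=e_0$.

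For $k=0$ and $k=d-1$ the same bookkeeping applies but the two congruence conditions coincide, so the $e_{d-1}$-contribution is either $0$ or $2$. Concretely, for $k=d-1$ we have $T_{0} = (0,d-1,0)$ so both coordinates start at $0$ and stay equal, both equal to $n\bmod(d-1)$; the big jump on each side happens simultaneously precisely when $n\equiv d-2\pmod{d-1}$, giving $\ell_n^{(d-1)} = e_0 + 2e_{d-1}$ there and $e_0$ otherwise. For $k=0$ we have $T_{d-1} = (0,d-1,0)$ as well (since $d-1-k = d-1$ and $T_{d-1}=(0,d-1,0)$), so the argument is identical. One small point to verify is that the recursion $\vec{w}_{n+1}=M\vec{w}_n+\ell_n^{(k)}$ is also valid through the indices where $w_n \in \{F_0,\dots,F_{d-3}\}$; there the associated triplet is still $f^n(T_{d-1-k})$ by Example~\ref{ex:iteraceEpsilonu} and Corollary~\ref{coro:rodinyBispecialu}, and Definition~\ref{def:Zobrazeni_f} applies verbatim, so no special treatment is needed — the only reason those indices matter elsewhere is that $F_i$ spawns several offsprings, but along a single fixed orbit there is no ambiguity.

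I do not expect a serious obstacle here; the proposition is essentially a translation exercise. The one place to be careful is the modular arithmetic for the right-coordinate when $k=1$ (the orbit starts with right-coordinate already at $d-2$, so the first big jump on the right is at $n=0$, matching $n\equiv k-1=0$) and making sure the identification from Lemma~\ref{lem:vzdy_d-1} — replacing a coordinate $<d-1$ by $d-1$ without changing the associated bispecial factor — does not interfere with reading off which of the four cases of Definition~\ref{def:Zobrazeni_f} applies; it does not, because the case split in Definition~\ref{def:Zobrazeni_f} is phrased in terms of the coordinates $a,b<d-1$ that we are already tracking.
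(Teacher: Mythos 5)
Your proposal is correct and follows the same route as the paper's (much terser) proof: read off from Definition~\ref{def:Zobrazeni_f} that each application of $f$ contributes $M\vec{w}_n + e_0$ plus one $e_{d-1}$ for each outer coordinate equal to $d-2$, and then track the two coordinates, which both advance by $+1 \bmod (d-1)$, to see that the extra $e_{d-1}$'s occur exactly at $n\equiv k-1$ and $n\equiv d-2 \pmod{d-1}$ (coinciding precisely for $k=0$ and $k=d-1$). The only blemish is the harmless slip writing $T_0=(0,d-1,0)$ instead of $T_0=(0,\varepsilon,0)$; since only the outer coordinates enter the recurrence, this does not affect the argument.
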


\begin{proof} 
The statement is a direct consequence of the form of $f$ from Definition~\ref{def:Zobrazeni_f}.
The bispecial factor $w_{n+1}$ is obtained by applying the morphism $\varphi$ on $w_n$ (Parikh vector $M \vec{w}_n$), appending $0$ (Parikh vector $e_0$) and possibly adding $d-1$ (Parikh vector $e_{d-1}$) to the beginning or/and to the end.
This addition happens with period $d-1$.
    
\end{proof}

\section{Return words to bispecial factors of $\uu_d$ }\label{sec:return}
To compute the asymptotic critical exponent using a result from~\citep{DolceDP2023} (later in this text recalled as Theorem~\ref{thm:FormulaForE}), we need to know the shortest return words to bispecial factors of $\uu_d$. 
The following theorem makes this task easier.  
\begin{theorem}(\citep{BPSteiner2008})\label{thm:praveDreturn} Let  $\uu$ be a $d$-ary uniformly recurrent sequence. If all bispecial factors are ordinary, then each factor has exactly $d$ return words. 
\end{theorem}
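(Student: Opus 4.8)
The plan, which reconstructs the argument of~\citep{BPSteiner2008}, is to proceed in three stages; throughout write $R_w$ for the set of return words to a factor $w$, and recall that $\uu$ is $d$-ary (all $d$ letters occur) and uniformly recurrent. The first stage shows that $\uu$ has affine factor complexity $\mathcal{C}(n)=(d-1)n+1$: by Cassaigne's second-difference identity
$$\mathcal{C}(n+2)-2\mathcal{C}(n+1)+\mathcal{C}(n)=\sum_{|v|=n}\mathrm{b}(v)\qquad(n\geq 0,\ \mathcal{C}(0)=1)\,,$$
every factor that is not bispecial automatically has bilateral order $0$, and every bispecial factor of $\uu$ (the empty word included) has bilateral order $0$ by hypothesis, so the right-hand side vanishes for all $n$. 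Hence $\mathcal{C}(n+1)-\mathcal{C}(n)$ is constant, equal to $\mathcal{C}(1)-\mathcal{C}(0)=d-1$.

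\emph{Why the number is $d$.} For $n\geq 1$ the Rauzy graph $\Gamma_n$ (vertices: factors of length $n$; one edge, from the length-$n$ prefix to the length-$n$ suffix, for each factor of length $n+1$) is strongly connected by uniform recurrence, so by the first stage it has cyclomatic number $\mathcal{C}(n+1)-\mathcal{C}(n)+1=d$. This invariant is the structural reason behind the value $d$: a return word to $w$ is a word $r$ for which $rw$ is a factor in which $w$ occurs only as its prefix and as its suffix, so a complete return word traces a closed first-return walk at the vertex $w$, and the number of relevant such walks — governed by the cyclomatic number, made finite by uniform recurrence — turns out to be $d$. The hypothesis that all bispecial factors are ordinary is precisely what prevents the branchings of $\Gamma_n$ from spoiling this count; without it, e.g.\ for the Thue--Morse word, one already finds factors with $3$ and with $4$ return words.

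\emph{The count equals $d$.} I would then prove $\#R_w=d$ for every factor $w$ by induction on $|w|$, the cases $|w|\leq 1$ being immediate ($R_\varepsilon=\mathcal A$). Let $|w|=n\geq 2$ and write $w=aw'=w''b$ with $a,b$ letters and $|w'|=|w''|=n-1$. If $w'$ is not right special, then $b$ is its only right extension, the occurrences of $w'$ and of $w$ coincide, $R_w=R_{w'}$, and we conclude by induction; symmetrically, if $w''$ is not left special, then $R_w=R_{w''}$. In the remaining case $w'$ is right special and $w''$ is left special, so the factor $u$ obtained from $w$ by deleting its first and last letters is both right and left special, hence bispecial, hence ordinary. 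It remains to show that the return words to $u$ — and to its one-letter extensions $au$ and $ub$, all of which number $d$ by the induction hypothesis — recombine into exactly $d$ return words to $w=aub$. Passing to the derived sequence over $R_u$, each occurrence of $u$ acquires a left-neighbour letter (read off from the preceding return word) and a right-neighbour letter (read off from the following one), so the occurrences of $aub$ form a distinguished coloured pattern inside that derived sequence, and the equality $\#\mathrm{Bext}(u)=\#\mathrm{Lext}(u)+\#\mathrm{Rext}(u)-1$ — precisely ordinariness — is what forces the number of return words of this two-sided colouring to stay equal to the number of return words to $u$, namely $d$.

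\emph{Main obstacle.} The crux is this last case: turning ``$\mathrm{b}(u)=0$'' into the assertion that splitting the occurrences of an ordinary bispecial factor simultaneously by both neighbours does not change the number of return words. A convenient way to manage the bookkeeping is to pass to the reduced Rauzy graph (contract every vertex that is neither left nor right special) and to check that ordinariness makes the contribution of each branching vertex additive, so that contraction leaves both the cyclomatic number and the first-return count at every vertex unchanged, which pins the latter to $d$. Uniform recurrence is used throughout — to keep the Rauzy graphs strongly connected and, crucially, to guarantee that the set of return words to each factor is finite.
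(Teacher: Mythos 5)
First, a point of reference: the paper does not prove Theorem~\ref{thm:praveDreturn} at all — it is imported with a citation to \cite{BPSteiner2008} — so there is no in-paper argument to compare yours against; what follows assesses your reconstruction on its own terms.

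Your first two stages are sound: Cassaigne's second-difference identity does give $\mathcal{C}(n)=(d-1)n+1$ under the ordinariness hypothesis, and the Rauzy graphs are then strongly connected with cyclomatic number $d$. You also correctly reduce, for $|w|=n\geq 2$, to the case where the interior word $u$ is an ordinary bispecial factor (modulo a harmless swap of roles: it is the length-$(n-1)$ \emph{prefix} whose non-right-specialness identifies the occurrences and return words of $w$ with its own, and the \emph{suffix} whose non-left-specialness does so up to conjugation of the return words). The genuine gap sits exactly where you flag the ``main obstacle'': the assertion that, for an ordinary bispecial $u$, passing from $u$ to $aub$ preserves the number of return words is never argued — the sentence ``ordinariness \dots is what forces the number of return words of this two-sided colouring to stay equal to $d$'' restates the theorem rather than proving it. The cyclomatic-number heuristic cannot close this: in a strongly connected multigraph the number of first-return loops at a vertex is \emph{not} the cyclomatic number in general (two parallel edges into and two out of an intermediate vertex yield four loops against cyclomatic number three — and that intermediate vertex is exactly a bispecial factor with $\mathrm{b}=1$), and in any case not every first-return walk in $\Gamma_n$ is realized by a factor of $\uu$; deciding which walks are realized is governed by the bilateral extensions of the intermediate bispecial vertices, i.e.\ by the very quantity $\mathrm{b}(v)$ you need to exploit. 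A workable route is to count complete return words as the leaves of the finite (by uniform recurrence) tree of right prolongations of $w$, which gives $\#R_w=1+\sum_{v}(\#\mathrm{Rext}(v)-1)$ over the right special factors $v$ admitting $w$ as a prefix and containing no second occurrence of $w$, and then to use ordinariness together with $\sum_{|v|=m}(\#\mathrm{Rext}(v)-1)=d-1$ to pin that sum to $d-1$; none of this bookkeeping appears in your sketch. Finally, your base case is not actually immediate: for $|w|=1$ your reduction lands on the bispecial factor $\varepsilon$ (your $u$ would have negative length), so $R_\varepsilon=\mathcal{A}$ does not settle $\#R_a=d$ for a single letter $a$ — that is already the first genuine instance of the hard step.
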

The sequence $\uu_d$ is uniformly recurrent and all bispecial factors are ordinary by Theorem~\ref{thm:ordinaryBS}, thus each factor of $\uu_d$ has $d$ return words. Obviously, for every prefix $w$ of $\uu_d$, the sequence $\uu_d$ may be written as a concatenation of return words to $w$. Thus, when coding return words by letters, we obtain a $d$-ary sequence, called a derived sequence. It is interesting to mention that the derived sequence is again equal to $\uu_d$.\footnote{We were pointed out to this by Herman Goulet-Oullet in private communication. More on morphisms preserving return sets in~\citep{Herman}.} 

Again, in the text, we fix $d\geq 3$ and we abbreviate $\varphi = \varphi_d$ and $\uu = \uu_d$. 

\begin{lemma}\label{lem:completeReturn} Let  $w$ be a bispecial factor associated with $(a,w,b) \in \mathcal{T}$ such that $w$ has exactly two left and two right extensions. 

If $r$ is a  return word to $w$, then $s\varphi(r)s^{-1}$  is a return word  to the offspring of $w$, where $s= {\rm lcs\,}\{\varphi(a), \varphi(d-1)\}$. 

In particular, if $\vec{r}$ is the Parikh vector of a return word to $w$, then $M\vec{r}$ is the Parikh vector of a return word to its offspring. 

\end{lemma}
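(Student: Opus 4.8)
The claim has two parts: first, that $s\varphi(r)s^{-1}$ is a return word to the offspring $w'$ of $w$, where $s = {\rm lcs\,}\{\varphi(a),\varphi(d-1)\}$; second, the consequence about Parikh vectors. The second part follows from the first by taking Parikh vectors: $\vec{s\varphi(r)s^{-1}} = \vec{s} + M\vec{r} - \vec{s} = M\vec{r}$, since applying $\varphi$ multiplies the Parikh vector by $M$ and the prefix/suffix cancellation removes and re-adds $\vec{s}$. So the real work is the first part.

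First I would set up the structure carefully. By hypothesis $w$ has exactly two left extensions, say $a$ and $d-1$, and exactly two right extensions, say $b$ and $d-1$; the offspring $w'$ is the bispecial factor in $f(a,w,b) = (a',w',b')$ given by Definition~\ref{def:Zobrazeni_f}, so $w' = s\varphi(w)p$ where $s = {\rm lcs\,}\{\varphi(a),\varphi(d-1)\}$ and $p = {\rm lcp\,}\{\varphi(b),\varphi(d-1)\}$ (this is exactly how $f$ acts on the simplified triplets, matching the edge labels in \eqref{eq:left} and \eqref{eq:right}). The plan is to track occurrences of $w'$ in $\uu$ via the morphism $\varphi$. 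Since $\varphi$ is injective and the relevant factors have synchronization points before each $0$, an occurrence of $w' = s\varphi(w)p$ in $\uu$ corresponds (via a synchronization/desubstitution argument) to an occurrence of $w$ in $\uu$: more precisely, reading $\uu$ as $\varphi$ applied to $\uu$ itself, an occurrence of $w'$ forces a factor of $\uu$ of the form $a w b'$ or similar mapping under $\varphi$ to a block containing $w'$, and the letter immediately left of $w$ in that preimage is one of the left extensions of $w$, with an analogous statement on the right. The key point is that the desubstitution is unambiguous: because $w$ has exactly two left extensions $a$ and $d-1$ and the longest common suffix of $\varphi(a)$ and $\varphi(d-1)$ is precisely $s$, the occurrences of $w'$ in $\uu$ are in bijection with the occurrences of $w$ in $\uu$, each occurrence of $w'$ sitting at the position $\varphi(\text{position of }w)$ shifted left by $|s|$.

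Next I would use this bijection to translate consecutive occurrences of $w$ into consecutive occurrences of $w'$. If $j < \ell$ are two consecutive occurrences of $w$ in $\uu$, with return word $r = u_j\cdots u_{\ell-1}$, then $\varphi$ maps the block from position $j$ to position $\ell$ onto a block of $\uu$ starting at position $|\varphi(u_0\cdots u_{j-1})|$; shifting by $-|s|$ on both ends, the two consecutive occurrences of $w'$ are separated by the word $s\varphi(r)s^{-1}$ (one checks $|s\varphi(r)s^{-1}| = |s| + |\varphi(r)| - |s| = |\varphi(r)|$, so this is well-defined as a word once one verifies $s$ is indeed a prefix of $\varphi(r)$, which holds because $r$ starts with the letter following $w$, hence $\varphi(r)$ starts with $\varphi$ of that letter, whose longest common suffix structure with $\varphi(a)$... — wait, this needs care). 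Actually the cleaner route: $w'$ starts with $s$ and the occurrence of $w'$ corresponding to occurrence $j$ of $w$ ends at position $|\varphi(u_0\cdots u_{\ell-1})| + |p|$ relative to the start; the word between the start of this occurrence of $w'$ and the start of the next is exactly $s\varphi(r)s^{-1}$. I would verify that no occurrence of $w'$ lies strictly between these two — this is where the "exactly two extensions" hypothesis is essential, since it guarantees the desubstitution of $w'$-occurrences is faithful (no spurious occurrences of $w'$ arising from positions not coming from occurrences of $w$).

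\textbf{Main obstacle.} The delicate step is the faithful desubstitution: showing that occurrences of $w'$ in $\uu$ are in exact bijection with occurrences of $w$, with no "extra" occurrences of $w'$ and no merging. This requires exploiting the synchronization points (every occurrence of $0$ is a synchronization point, and $w'$ begins and ends appropriately relative to such points — indeed $\varphi(w)0$ begins with $0$ and $w' = s\varphi(w)p$ where $s,p$ are short controlled words), together with the fact that $w$ has precisely two one-sided extensions on each side so that the boundary letters $s$ and $p$ are determined. I expect one must also handle the edge case where $a = d-2$ (so $s = 0(d-1)$, two letters) versus $a < d-2$ (so $s = 0$), and similarly for $b$, matching the four cases in Definition~\ref{def:Zobrazeni_f}; but in all cases $s = {\rm lcs\,}\{\varphi(a),\varphi(d-1)\}$ and the argument is uniform. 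Once the bijection of occurrences is established, the Parikh vector statement is immediate.
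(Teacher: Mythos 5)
Your proposal is correct and follows essentially the same route as the paper: the paper also writes $w'=s\varphi(w)p$, uses the synchronization points $\bullet\varphi(w)\bullet$ together with injectivity of $\varphi$ to show that $w'$ occurs exactly twice in $s\varphi(rw)p$ (it just argues locally on the complete return word $x_1rwy_1$ rather than via a global bijection of occurrences), and concludes that $s\varphi(r)s^{-1}$ is a return word to $w'$. Your momentary worry about $s$ being a \emph{prefix} of $\varphi(r)$ is a slip — what is needed, and what holds because the last letter of $r$ is a left extension of $w$, is that $s$ is a \emph{suffix} of $\varphi(r)$ — but your subsequent positional bookkeeping resolves this correctly.
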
 

\begin{proof}    The bispecial factor $w$  has exactly two occurrences in  the complete return word $R=rw$, as a prefix and as a suffix.  There exist $x_1, x_2 \in \{a,d-1\} $ and $y_1,y_2 \in \{b,d-1\}$  such that: 

1) $x_1Ry_1 \in \mathcal{L}(\uu)$ \ \ and \ \  2) $x_1wy_2$ is a prefix  and $x_2wy_1$ is a suffix of  $x_1Ry_1$.

\medskip
 
\noindent By the definition of offspring, $w'=s
  \varphi(w)p $  
   with $s= {\rm lcs\,}\{\varphi(a), \varphi(d-1)\}$  and $p= {\rm lcp\,}\{\varphi(b),\varphi(d-1)\}$. As  $s$ is a suffix of $\varphi(x_1)$ and $\varphi(x_2)$ and $p$ is a prefix of $\varphi(y_1)$ and $\varphi(y_2)$, the factor $w'$ occurs in $\varphi(x_1wy_2)$ and in  $\varphi(x_2wy_1)$. 
  
   The facts that  $\varphi(w)$ has synchronization points $\bullet \varphi(w)\bullet$ and $\varphi$ is an injective morphism guarantee that $\varphi(w)$ occurs in $\varphi(x_1Ry_1)$ only as a factor of $\varphi(x_1wy_2)$ and  $\varphi(x_2wy_1)$.
 In other words, $w'=s\varphi(w)p$ has exactly two occurrences in $\varphi(x_1)\varphi(R) \varphi(y_1)$.    Thus 
 $R'= s \varphi(R)p  = s\varphi(r)s^{-1}s\varphi(w) p$ is a complete return word to $w'$. Consequently, $s\varphi(r)s^{-1} $ is a~return word to $w'$.  
\end{proof}

Proposition~\ref{prop:3extensions} implies that if a bispecial factor has only two left and two right extensions, then its offspring has also the same property. Moreover, by Lemma~\ref{lem:completeReturn}, the return words to offsprings have the same Parikh vectors as images by $\varphi$ of return words to the original bispecial factors. In the sequence of triplets $f^n(T_k)$ we will identify the shortest bispecial factors containing at least one letter $d-1$. We will show that they have only two left and two right extensions and describe their return words. For this purpose, we will use the words $F_k$ and their properties listed in the next lemma.

\begin{lemma}\label{lem:drobnosti} Let $k\in \{0,1,\ldots, d-1\}$ and $F_k$ be the word  defined in  \eqref{eq:DefF}. 
\begin{enumerate} \item $F_k = \varphi^{k-1}(0)\varphi^{k-2}(0)\cdots \varphi(0)0$  \ for  $k=1,2,\ldots, d-1$. 

\item Only letters strictly smaller than $k$ occur in $F_k$. 

\item $F_kk= \varphi^k(0)$.

\item  $\varphi(F_k)0 = \varphi^k(0)F_k$.

\item $\varphi^d(0) = \varphi^{d-1}(0)\varphi^{d-1}(0) (d-1) $.

\item $F_k(d-1)$ is a suffix of $F_{d-1}(d-1) = \varphi^{d-1}(0)$. 

\item $\varphi^k(d-1)F_k(d-1) \in \mathcal{L}(\uu) $.

\item $\varphi^k(d-1) = F_k(d-1)(d-1)\varphi(d-1)\varphi^2(d-1)\cdots \varphi^{k-1}(d-1)$ \ \ for  $k=1,2,\ldots, d-1$.

\item If a letter $i \neq d-1$ occurs in $\varphi^k(d-1)$, then $i<k$.   
\end{enumerate}
\end{lemma}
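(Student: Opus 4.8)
The plan is to prove each of the nine items essentially by induction on $k$, exploiting the recursive definition $F_k = \varphi(F_{k-1})0$ from \eqref{eq:DefF} together with the explicit form of $\varphi = \varphi_d$ in \eqref{eq:morphismFi}. Items 1--4 are purely combinatorial identities about $\varphi$ and the words $F_k$; items 5--9 mix these with the structure of $\varphi^k(d-1)$ and membership in $\mathcal{L}(\uu)$.

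First I would establish items 1--4. For item 1, induct: $F_1 = \varphi(F_0)0 = \varphi(\varepsilon)0 = 0$, and $F_k = \varphi(F_{k-1})0 = \varphi\bigl(\varphi^{k-2}(0)\cdots\varphi(0)0\bigr)0 = \varphi^{k-1}(0)\cdots\varphi(0)0$. Item 2 follows from item 1 since $\varphi(j)$ uses only letters $0,1,\ldots,j+1$, so $\varphi^{k-1}(0)$ uses only letters $<k$; alternatively induct directly on the recursion, noting $\varphi$ maps $\{0,\ldots,k-2\}$ into words over $\{0,\ldots,k-1\}$. Item 3: $\varphi^k(0) = \varphi^{k-1}(\varphi(0)) = \varphi^{k-1}(01) = \varphi^{k-1}(0)\varphi^{k-1}(1)$; now $\varphi^{k-1}(1) = \varphi^{k-2}(02) = \varphi^{k-2}(0)\varphi^{k-2}(2)$, and iterating this peeling gives $\varphi^k(0) = \varphi^{k-1}(0)\varphi^{k-2}(0)\cdots\varphi(0)\varphi(k-1)$ wait --- more carefully, $\varphi^{k-1}(1)=\varphi^{k-2}(0)\varphi^{k-2}(2)$, $\varphi^{k-2}(2)=\varphi^{k-3}(0)\varphi^{k-3}(3)$, \ldots, down to $\varphi(k-1) = 0(k-1)\ldots$ hmm, one must be careful at the bottom where $\varphi(k-1)=0k$ if $k-1\le d-2$. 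Tracking this telescoping carefully yields $\varphi^k(0) = \varphi^{k-1}(0)\varphi^{k-2}(0)\cdots\varphi(0)\,0\,k = F_k k$ by item 1. Item 4 is then immediate: $\varphi(F_k)0 = F_{k+1} = \varphi^k(0)\varphi^{k-1}(0)\cdots 0$ wait, let me recompute --- by item 1, $F_{k+1}=\varphi^k(0)\varphi^{k-1}(0)\cdots\varphi(0)0 = \varphi^k(0)F_k$, and $\varphi(F_k)0 = F_{k+1}$ by \eqref{eq:DefF}, so $\varphi(F_k)0 = \varphi^k(0)F_k$.

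Next I would handle items 5--9, which involve $d-1$. Item 5: $\varphi^d(0) = \varphi^{d-1}(\varphi(0)) = \varphi^{d-1}(01) = \varphi^{d-1}(0)\varphi^{d-1}(1)$; separately $\varphi^d(0)=\varphi(\varphi^{d-1}(0))$, but the cleanest route is $\varphi^{d-1}(1) = \varphi^{d-2}(02) = \cdots = \varphi^{d-1}(0)(d-1)$ (the telescoping from item 3's proof, now hitting $\varphi(d-2)=0(d-1)$ and then $\varphi(d-1)=0(d-1)(d-1)$ at the bottom), giving $\varphi^d(0) = \varphi^{d-1}(0)\varphi^{d-1}(0)(d-1)$. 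Item 6 follows from item 1: $F_{d-1}(d-1) = \varphi^{d-2}(0)\cdots\varphi(0)0(d-1)$, wait $F_{d-1}(d-1)=\varphi^{d-1}(0)$ by item 3 with $k=d-1$, and for general $k$, $F_{d-1}$ ends with $F_k$ as a suffix by item 1 (since $F_{d-1}=\varphi^{d-2}(0)\cdots\varphi^{k}(0)F_k$... actually $F_{d-1}=\varphi^{d-2}(0)\cdots\varphi(0)0$ and $F_k=\varphi^{k-1}(0)\cdots\varphi(0)0$, so $F_k$ is a suffix of $F_{d-1}$), hence $F_k(d-1)$ is a suffix of $F_{d-1}(d-1)=\varphi^{d-1}(0)$. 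Items 7--8 describe $\varphi^k(d-1)$ explicitly; I would prove item 8 by induction using $\varphi^k(d-1)=\varphi^{k-1}(\varphi(d-1))=\varphi^{k-1}(0)\varphi^{k-1}(d-1)\varphi^{k-1}(d-1)$ and then substituting the inductive hypothesis together with item 3 ($\varphi^{k-1}(0)=F_{k-1}(k-1)$) and item 1. Item 7 then reads off from item 8 (the prefix $F_k(d-1)(d-1)$ of $\varphi^k(d-1)$, combined with the fact that $\varphi^k(d-1)$ is a factor of $\uu$, shows $\varphi^k(d-1)F_k(d-1)\in\mathcal{L}(\uu)$ after a short synchronization argument). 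Item 9 follows from item 8 by induction: $F_k$ uses letters $<k$ by item 2, $(d-1)$ is the exceptional letter, and $\varphi^j(d-1)$ for $j<k$ uses letters $<j+1\le k$ (with $d-1$ excepted) by the inductive hypothesis, and the letter in $\varphi^j(d-1)$ contributed beyond is again $d-1$.

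The main obstacle I anticipate is getting the telescoping identities in items 3 and 5 exactly right at the "bottom" of the recursion, where the generic rule $\varphi(i)=0(i+1)$ switches to the special rule $\varphi(d-1)=0(d-1)(d-1)$; an off-by-one there propagates into items 4, 6, 7, 8. The safe approach is to prove item 3 first as a clean standalone induction ($\varphi^k(0)=F_k k$, with base case $\varphi(0)=01=F_1\cdot 1$ using $F_1=0$) and then derive items 1, 4, 5, 6 as corollaries, rather than trying to prove all the identities simultaneously. The membership claims (items 7 and 9) are comparatively easy once the explicit factorizations of items 6 and 8 are in hand, since $\uu$ is a fixed point of $\varphi$ so $\varphi^k(\mathcal{L}(\uu))\subseteq\mathcal{L}(\uu)$ and in particular $\varphi^k(d-1)$ and anything of the form $\varphi^k(\text{factor})$ stays in the language.
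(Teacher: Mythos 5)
Your proposal is correct and takes essentially the same route as the paper: every item is obtained by induction on $k$ from the recursion $F_k=\varphi(F_{k-1})0$ and the explicit form of $\varphi$ (your telescoping of $\varphi^k(0)$ in Items 3 and 5 is just an unrolled version of the paper's inductions). The one structural difference is Item 7, which you deduce from Item 8 instead of proving it by a separate induction as the paper does; that shortcut is sound, but the ``short synchronization argument'' you allude to should simply be the observation that $(d-1)(d-1)\in\mathcal{L}(\uu)$, hence $\varphi^k(d-1)\varphi^k(d-1)=\varphi^k\bigl((d-1)(d-1)\bigr)\in\mathcal{L}(\uu)$, of which $\varphi^k(d-1)F_k(d-1)$ is a prefix because $F_k(d-1)$ is a prefix of $\varphi^k(d-1)$ by Item 8.
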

\begin{proof} Items 1 and 2 follow easily from the definition of $F_k$ and the form of $\varphi$. 

\medskip

Item 3: We proceed by induction. For $k=0$, we see   $F_00 = 0= \varphi^0(0) $. 

\noindent If $k>0$, then by definition of $F_k$ we get $F_kk = \varphi(F_{k-1})0k = \varphi(F_{k-1}(k-1))$. Using the induction hypothesis we deduce $ F_kk= \varphi(\varphi^{k-1}(0)) = \varphi^k(0)$, as needed.

\medskip

Item 4: By  Item 1,  $\varphi(F_k)0 = \varphi^k(0)\varphi^{k-1}(0)\cdots \varphi(0)0=\varphi^k(0) F_{k}$.

\medskip

Item 5:  We apply   Item 3, Item  4 and again Item 3 with $k=d-1$: 
$$
\varphi^d(0) = \varphi(\varphi^{d-1}(0)) = \varphi\bigl(F_{d-1}(d-1)\bigr)= \underbrace{\varphi\bigl(F_{d-1}\bigr) 0}_{\varphi^{d-1}(0)F_{d-1}}(d-1)(d-1) = \varphi^{d-1}(0)\underbrace{F_{d-1}(d-1)}_{\varphi^{d-1}(0)}(d-1)\,. 
$$

Item 6 is a direct consequence of Items 1 and  3. 

\medskip

Item 7: We proceed by induction. If $k=0$, then $(d-1)F_0(d-1) = (d-1)(d-1) \in \mathcal{L}(\uu)$, see the list \eqref{eq:list2factors}. 
Assume that $\varphi^k(d-1)F_k(d-1) \in \mathcal{L}(\uu) $.  
Then $\varphi\Bigl(\varphi^k(d-1)F_k(d-1)\Bigr)$ is a factor of $\uu$, too. As  $\varphi\Bigl(\varphi^k(d-1)F_k(d-1)\Bigr) =   \varphi^{k+1}(d-1)\underbrace{\varphi(F_{k})0}_{F_{k+1}}(d-1)(d-1)$, the statement follows. 

\medskip

Item 8: We proceed again by induction. For $k=1$, the statement holds. Assume that $\varphi^k(d-1) = F_k(d-1)(d-1)\varphi(d-1)\varphi^2(d-1)\cdots \varphi^{k-1}(d-1)$ holds for some $k, \ 1\leq k < d-1$. Then 

$\begin{array}{rcl}
\varphi^{k+1}(d-1)&=&\varphi\Bigl(F_k(d-1)(d-1)\varphi(d-1)\varphi^2(d-1)\cdots \varphi^{k-1}(d-1)\Bigr)\\
&=&\underbrace{\varphi(F_k)0}_{F_{k+1}}(d-1)(d-1)\varphi(d-1)\cdots \varphi^k(d-1)\,.
\end{array}$

\medskip

Item 9: We proceed again by induction. For $k=1$, the statement holds. Assume it holds for some $k, \ 1\leq k < d-1$. Consider letters in $\varphi^{k+1}(d-1) = \varphi^k\bigl(0(d-1)(d-1)\bigr)=\varphi^k(0)\varphi^k(d-1)\varphi^k(d-1)$. By induction assumption, only $d-1$ and letters smaller than $k$ occur in $\varphi^k(d-1)$, and by Item 2, only letters smaller than $k+1$ occur in $\varphi^k(0)$.

\end{proof}

{\bf Notation:} Let $\vec{x} = (x_1,x_2, \ldots, x_d)^T\in \mathbb{R}^{d}$ and $\vec{y} = (y_1,y_2, \ldots, y_d)^T\in \mathbb{R}^{d}$. If $x_i\geq y_i$ for all $i=1,2,\ldots, d$, we write $\vec{x}\geq \vec{y}$.

\medskip

In order to compute the asymptotic critical exponent of $\uu_d$, we need to know the length of the shortest return words to bispecial factors. Obviously, if $r_1$ and $r_2$ are two return words to a~bispecial factor $w$ and if their Parikh vectors satisfy $\vec{r}_1\geq \vec{r}_2$, then $|r_1|\geq |r_2|$. If moreover $w$ meets assumptions of Lemma~\ref{lem:completeReturn}, then non-negativity of the matrix $M$ implies $M\vec{r_1}\geq M\vec{r_2}$, hence the return word $s\varphi(r_1)s^{-1}$ to the offspring of $w$ is longer than or of the same length as the return word $s\varphi(r_2)s^{-1}$.

\begin{proposition}\label{pro:nejkratsiBS}  Let $k \in \{0,1,\ldots, d-1\}$ be fixed. Denote  by  $w$  the bispecial factor associated with the triplet $f^k(T_{d-1-k})$.  
\begin{itemize}
\item If  $k=d-1$,  then
\begin{itemize}
\item $w=(d-1)F_{d-1}(d-1)$ and $w$ has exactly two left and two right extensions;
\item $B:=w\varphi^{d-1}(0)$ is a complete return word to $w$ in $\uu$;
\item $\vec{R}\geq \vec{B}$ for every complete return word $R$ to $w$ in $\uu$.
\end{itemize}

\item If $k< d-1$, then 
\begin{itemize}
    \item $w=F_{k}(d-1)$  and $w$ has exactly two left and two right extensions;
    \item $A:=\varphi^k(d-1)w$  and $B:= w\varphi^{d-1}(0)$ are complete return words to $w$ in $\uu$;
    \item $\vec{R}\geq \vec{A}$ or $\vec{R}\geq \vec{B}$ for every complete return word $R$ to $w$ in $\uu$.  
\end{itemize}

\end{itemize}
\end{proposition}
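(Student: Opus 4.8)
The plan is to treat the two cases $k=d-1$ and $k<d-1$ by different but parallel arguments, and in each case to (i) identify the bispecial factor $w$ explicitly using the iteration of $f$ from Example~\ref{ex:iteraceEpsilonu} together with Definition~\ref{def:Zobrazeni_f}, (ii) check that $w$ has exactly two left and two right extensions using Proposition~\ref{prop:3extensions} (equivalently Corollary~\ref{coro:d-1}, since $w\notin\{F_0,\dots,F_{d-3}\}$ once a letter $d-1$ appears), (iii) verify that the claimed words $A$ and $B$ are complete return words by explicitly exhibiting the two occurrences of $w$ inside them and checking there is no occurrence in between, and (iv) prove the minimality-of-Parikh-vector statement, which is the real content.

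For the identification step I would use Example~\ref{ex:iteraceEpsilonu}: $f^{d-1-k}(0,\varepsilon,0)=(d-1-k,F_{d-1-k},d-1-k)$, but here we iterate $f$ starting from $T_{d-1-k}=(0,\varepsilon,d-1-k)$ (when $d-1-k\le d-2$) or $T_{d-1}=(0,d-1,0)$ (when $k=0$), so I would track the first $k$ iterates of $f$ on $T_{d-1-k}$ and read off from Definition~\ref{def:Zobrazeni_f} when the letter $d-1$ first gets appended. Concretely, applying $f$ repeatedly turns $\varepsilon$ into $\varphi(\varepsilon)0=0=F_1$, then $F_1$ into $\varphi(F_1)0=F_2$, and so on, exactly as in Lemma~\ref{lem:drobnosti}(1); the first coordinate cycles $0,1,2,\dots$ and the $d-1$ prefix/suffix is attached precisely when the coordinate hits $d-2$. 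Carefully counting, after $k$ steps from $T_{d-1-k}$ one lands on the triplet whose middle element is $F_k(d-1)$ for $k<d-1$ (the $d-1$ coming from the $b$-coordinate reaching $d-2$ exactly at step $k$), and $(d-1)F_{d-1}(d-1)$ for $k=d-1$ (both coordinates reaching $d-2$ at the last step); the two-sided extension count follows since $F_k(d-1)$ and $(d-1)F_{d-1}(d-1)$ are offsprings of $F_{k-1}$, resp. $F_{d-2}$, and Proposition~\ref{prop:3extensions} plus the form of $f$ (Lemma~\ref{lem:vlastnostif}, Item 3) show the offspring has exactly two left and two right extensions. That $B=w\varphi^{d-1}(0)$ is a complete return word should follow from Lemma~\ref{lem:drobnosti}(6),(7): $F_k(d-1)$ is a suffix of $\varphi^{d-1}(0)=F_{d-1}(d-1)$, so $w$ reoccurs as a suffix of $w\varphi^{d-1}(0)$, and one checks via the synchronization structure and the list~\eqref{eq:list2factors} that there is no intermediate occurrence. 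For $A=\varphi^k(d-1)w$ when $k<d-1$, Lemma~\ref{lem:drobnosti}(7) gives $\varphi^k(d-1)F_k(d-1)\in\mathcal L(\uu)$, so $w$ is a suffix of $A$; it is also a prefix of $A$ because, by Lemma~\ref{lem:drobnosti}(8), $\varphi^k(d-1)$ begins with $F_k(d-1)=w$; again one rules out intermediate occurrences.

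The hard part is the final clause: every complete return word $R$ to $w$ satisfies $\vec R\ge\vec B$ (in the $k=d-1$ case) or $\vec R\ge\vec A$ or $\vec R\ge\vec B$ (in the $k<d-1$ case). My approach is induction on $k$ using Lemma~\ref{lem:completeReturn}: for the base case one needs a direct, finite verification for the two or three ``smallest'' bispecial factors — essentially the factors of bounded length, where one can enumerate all return words. For the inductive step, Theorem~\ref{thm:praveDreturn} says $w$ has exactly $d$ return words; Lemma~\ref{lem:completeReturn} shows that $\varphi$-images (conjugated by $s=\mathrm{lcs}\{\varphi(a),\varphi(d-1)\}$) of the return words to the \emph{parent} bispecial factor give $d$ return words to $w$, hence (by the count) \emph{all} of them, so every $\vec R$ is of the form $M\vec r+(\text{correction from }s,s^{-1})$ for $r$ a return word to the parent. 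Since $M\ge0$, the order relation $\vec r\ge\vec r'$ is preserved under $r\mapsto M\vec r$, and the monotonicity remark just before the Proposition makes this precise; combined with the fact that the conjugation by $s$ and $s^{-1}$ affects all return words the same way, the two ``minimal'' Parikh vectors $\vec A,\vec B$ for $w$ are exactly the images of the two minimal ones for the parent, so the induction closes. The one delicate point I expect to fight with is the case distinction in Definition~\ref{def:Zobrazeni_f} (whether the $d-1$ is prepended, appended, both, or neither) interacting with the $s,s^{-1}$ conjugation in Lemma~\ref{lem:completeReturn}: one must check that the ``$A$-type'' return word (coming from approaching $w$ via a $\varphi^k(d-1)$ block) and the ``$B$-type'' one (coming from a $\varphi^{d-1}(0)$ block) really are the two that get tracked, and that no third candidate can sneak below both — which is where Lemma~\ref{lem:drobnosti}(5),(8),(9), constraining which letters and blocks can appear, does the bookkeeping.
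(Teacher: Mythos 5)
Your identification of $w$, the count of its extensions, and the verification that $A$ and $B$ are complete return words all follow essentially the paper's route (Example~\ref{ex:iteraceEpsilonu}, Proposition~\ref{prop:3extensions}, and the letter-counting facts of Lemma~\ref{lem:drobnosti}). The gap is in the minimality clause. Your induction on $k$ via Lemma~\ref{lem:completeReturn} cannot get off the ground: the $f$-preimage of $w=F_k(d-1)$ (the triplet $f^{k-1}(T_{d-1-k})$) has middle element $F_{k-1}$, which for $k-1\le d-3$ has \emph{more} than two left and two right extensions, so Lemma~\ref{lem:completeReturn} does not apply to it; and the factors $F_k(d-1)$ for distinct $k$ live in \emph{different} families $f^n(T_{d-1-k})$ and are not offsprings of one another (the offspring of $F_{k-1}(d-1)$ is $F_k(d-1)(d-1)0$, not $F_k(d-1)$). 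Each $w$ in the proposition is precisely the \emph{first} two-extension member of its family, so there is no two-extension parent from which to pull back return words, and also no sensible ``base case of two or three smallest bispecials'' — there are $d$ such base cases, one per $k$, and each is exactly the statement to be proved.

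What the paper does instead, and what your argument is missing, is a direct construction for each fixed $k$: it exhibits the two explicit families $A_j=\varphi^k(d-1)F_j(d-1)$ for $j=k,\dots,d-1$ and $B_j=w(d-1)\varphi(d-1)\cdots\varphi^{j-1}(d-1)\varphi^{d-1}(0)$ for $j=0,\dots,k$, proves each lies in $\mathcal{L}(\uu)$ and contains exactly two occurrences of $w$ (again via Lemma~\ref{lem:drobnosti}), observes $A_{d-1}=B_k$ and that all other members are pairwise distinct, and then invokes Theorem~\ref{thm:praveDreturn} to conclude that these $d$ words exhaust \emph{all} complete return words to $w$. The inequalities $\vec{A}=\vec{A}_k\le\vec{A}_j$ and $\vec{B}=\vec{B}_0\le\vec{B}_j$ are then read off from the explicit forms. (For $k=d-1$ the paper uses a different short argument: $B$ is the unique complete return word in which the two occurrences of $w$ overlap, and any non-overlapping one satisfies $\vec{R}\ge 2\vec{w}\ge\vec{B}$.) You correctly guessed that Theorem~\ref{thm:praveDreturn} supplies the exhaustiveness, but the mechanism that produces the full list of $d$ return words has to be this explicit construction, not propagation from a parent.
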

  
\begin{proof} We discuss separately the cases where the fixed $k$ equals $d-1$ and where $k< d-1$. 

\medskip

{\bf Case} $k=d-1$. \\
\noindent Applying $k$ times the mapping $f$ to the initial triplet $T_{0} =(0,\varepsilon, 0)$, we get the triplet $(0,w,0)$ with $w=(d-1)F_{d-1}(d-1)$. By Proposition~\ref{prop:3extensions}, the bispecial factor $w$ has exactly two left and two right extensions. Clearly, $B$ is a factor of $\uu$ since $(d-1)0$ is a factor and $B$ is a factor of $\varphi^d\bigl((d-1)0\bigr)$ by Items 3 and 5 of Lemma~\ref{lem:drobnosti}. The word $B=w\varphi^{d-1}(0)$ is a complete return word to $w$: by Item 3 of Lemma~\ref{lem:drobnosti}, $w\varphi^{d-1}(0)=(d-1)F_{d-1}(d-1)F_{d-1}(d-1)$, thus $w$ is both a prefix and a suffix of $B$. Moreover, $w$ is not contained in the middle since by Item 2, $F_{d-1}$ does not contain the letter $d-1$. For the same reason, $B$ is the only complete return word to $w$ where the occurrences of $w$ overlap. Hence, every other complete return word $R$ to $w$ satisfies $\vec R \geq \vec B$. \\

{\bf Case} $k\in \{0,1,\ldots, d-2\}$.
\begin{enumerate} 
\item Applying $k$ times, where $1\leq k <d-1$, the mapping $f$ introduced in Definition \ref{def:Zobrazeni_f} to  the initial triplet $T_{d-1-k} =(0,\varepsilon, d-1-k)$, we get the triplet $(k, w, 0)$, where  $w=F_k(d-1)$.  
For $k=0$, we have $w=F_0(d-1)=d-1$, which is the bispecial factor associated with $T_{d-1}=(0,d-1,0)$. By Proposition~\ref{prop:3extensions}, the bispecial factor $w$ has exactly two left and two right extensions.

\item Denote $A_j=\varphi^k(d-1)F_j(d-1)$ for $j=k, k+1, \ldots, d-1$. 
By Item 8 of Lemma~\ref{lem:drobnosti}, the word $w= F_k(d-1)$ is a prefix of $A_j$, by Item 1,  $w$ is a suffix of $A_j$. 

To show that $w$ occurs in $A_j$  just twice, we  exploit the fact that  $w$ contains only one letter $d-1$ and only one letter $k-1$ for $k\geq 1$:   
by Item 2, the letter $d-1$ does not occur in $F_j$ for $j\leq d-1$; by Items 8 and 9 of Lemma~\ref{lem:drobnosti}, the letter $k-1$ for $k\geq 1$ occurs in $\varphi^k(d-1)$ just once. Hence the word   $A_j=\varphi^k(d-1)F_j(d-1)$ contains just two occurrences of $w$.  

To conclude that for every $j=k,k+1, \ldots, d-1$ the word $A_j$ is a complete return to $w$, we need to show that $A_j$ belongs to the language of $\uu$.  
Indeed, 
 the word  $A_{j+1}$ is a factor of $$\varphi(A_j) = \varphi^{k+1}(d-1) \varphi(F_j)\varphi(d-1) = \varphi^k(0)\varphi^k(d-1)\underbrace{\varphi^k(d-1)F_{j+1}(d-1)}_{A_{j+1}}(d-1)$$ for every $j$ with $k\leq j<d-1$. Consequently $A_{j}$ is a factor of $\varphi^{j-k}(A_k)$  and  belongs to the language, as by Item 7, the word  $A_k$ is a factor of $\uu$.

As $F_k$ is a suffix of $F_j$ pro $j\geq k$, the Parikh vectors satisfy $\vec A=\vec{A}_k\leq \vec{A_j}$.

 \item Denote $B_0=B=w\varphi^{d-1}(0)$ and $B_{j} = w(d-1)\varphi(d-1)\cdots \varphi^{j-1}(d-1)\varphi^{d-1}(0)$ for $j=1,2,\ldots, k$. Obviously, $w$ is a prefix of $B_j$ and $\vec{B_j}\geq 
 \vec{B_0}$.   By  Item 6 of Lemma~\ref{lem:drobnosti}, $w=F_{k}(d-1)$ is a suffix of $\varphi^{d-1}(0)$ and thus $w$ is a suffix of $B_j$ as well. 

To deduce that $w$ occurs in $B_j$ just twice, we again use the fact that the letter $d-1$ present in $w$  occurs in $\varphi^{d-1}(0)$ only at the last position, and the letter $k-1$ present in $w$ occurs in $w(d-1)\varphi(d-1)\cdots \varphi^{j-1}(d-1)$ just once. 

Observe that $B_{j+1}$ is a factor of $\varphi(B_j)$. Indeed, by Item 4 of Lemma~\ref{lem:drobnosti},  
$$\varphi(w) = \varphi(F_k)0(d-1)(d-1) = \varphi^k(0)F_k(d-1)(d-1) = \varphi^k(0)w(d-1)$$ and by Item 5, $\varphi^d(0)=\varphi^{d-1}(0) \varphi^{d-1}(0) (d-1)$.  Thus 
$$\varphi(B_j) = \varphi^k(0)\underbrace{w(d-1)\varphi(d-1)\cdots \varphi^j(d-1)\varphi^{d-1}(0)}_{B_{j+1}}\varphi^{d-1}(0)(d-1).  $$
Since $B_0$ belongs to the language by Items 6 and 5 of Lemma~\ref{lem:drobnosti}, all factors of $\varphi^j(B_0)$, including $B_j$, belong to ${\mathcal L}(\uu)$.  
 \end{enumerate}

\bigskip
To summarize, 
we have shown that  $B=B_0, B_1, \ldots, B_{k}$  and $A=A_k, A_{k+1}, \ldots, A_{d-1}$ are  complete return words to $w$ in $\uu$. Clearly, $\vec{B}=\vec{B}_0\leq \vec{B_j}$ for every $ j=0,1,\ldots, k$ and $\vec A=\vec{A}_k\leq \vec{A_j}$ for every $ j=k, k+1, \ldots, d-1$.

By Items 3 and  8 of Lemma~\ref{lem:drobnosti}, $B_{k} = A_{d-1}= \varphi^k(d-1)\varphi^{d-1}(0)$. But other items in our list do not coincide. 
Indeed: for $j< d-1$, the penultimate occurrence of the letter $d-1$ in $A_j$ is followed by the suffix of length $|F_j(d-1)| < |F_{d-1}(d-1)|=|\varphi^{d-1}(0)|$, whereas the penultimate occurrence of $d-1$ in each $B_j$ is followed by the suffix of length  $|\varphi^{d-1}(0)|$. 

Hence, we have found $d$  distinct complete return words to $w$ in  $\uu$. By Theorem~\ref{thm:praveDreturn},  every complete return word $R$ to $w$ in $\uu$ occurs in the list $B_0, B_1, \ldots, B_{k}, A_k, A_{k+1}, \ldots, A_{d-1} = B_k$.  

\end{proof}
\begin{remark}\label{rem:JasneKratisReturn}  The above proposition does not decide for a general fixed $k\in \{0,1,\ldots, d-2\}$, whether $A$ or $B$  is a shorter complete return word to $w$, the bispecial factor associated with the triplet $f^k(T_{d-1-k})$. For $k=0$, clearly, $\vec A \leq \vec B$, thus $A$ is the shortest return word to $w$. 
    
\end{remark}

%


\section{The asymptotic critical exponent of $\uu_d$ }\label{sec:asymptotic}

We have already all ingredients needed to compute the asymptotic critical exponent of $\uu_d$, for $d\geq 3$. To do so, we make use of a result taken from~\citep{DolceDP2023}.  

\begin{theorem}(\citep{DolceDP2023})\label{thm:FormulaForE}
Let $\uu$ be a uniformly recurrent aperiodic sequence.
Let $(w_n)_{n\in\N}$ be the sequence of all bispecial factors in $\uu$ ordered by length.
For every $n \in \N$, let $r_n$ be the shortest return word to the bispecial factor $w_n$ in $\uu$.
Then
$$
E^*(\uu) = 1 + \limsup\limits_{n \to \infty}  \frac{|w_n|}{|r_n|} .
$$
\end{theorem}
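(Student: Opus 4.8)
The plan is to prove the two inequalities separately, in both directions through the classical correspondence between maximal repetitions (``runs'') in $\uu$ and the bispecial factors of $\uu$; write $L=\limsup_{n\to\infty}|w_n|/|r_n|$, so the claim reads $E^*(\uu)=1+L$. The inequality $E^*(\uu)\ge 1+L$ is the easy half: if $w$ is a bispecial factor and $r$ its shortest return word, then in the complete return word $rw$ the factor $w$ occurs both as a prefix and as a suffix, the two occurrences at distance $|r|$, so $rw$ has period $|r|$ and length $|r|+|w|$, i.e. $rw$ is the $\bigl(1+\tfrac{|w|}{|r|}\bigr)$-th power of its prefix of length $|r|$. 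Choosing a subsequence $(w_{n_k})$ realizing the limit superior and noting that $\uu$ has infinitely many distinct bispecial factors, we get $|w_{n_k}|\to\infty$, hence also $|r_{n_k}|\to\infty$ (a uniformly recurrent aperiodic sequence has no arbitrarily long factor of bounded period); so $\uu$ contains $\bigl(1+\tfrac{|w_{n_k}|}{|r_{n_k}|}\bigr)$-th powers of words of length $|r_{n_k}|\to\infty$, and the definition of $E^*$ yields $E^*(\uu)\ge 1+L$ (if $L=\infty$ this even gives $E(\uu)=\infty$, hence $E^*(\uu)=\infty$).

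For the reverse inequality $E^*(\uu)\le 1+L$ one may assume $E^*(\uu)>1$, since otherwise $L=0$ by the previous paragraph. The core step is to attach, to every sufficiently long power occurring in $\uu$, a long bispecial factor with a short return word. Let $z=u^e$ be a factor of $\uu$ with $|u|$ large and $e>1$ close to $E^*(\uu)$, and extend the given occurrence of $z$ to a factor $v$ of $\uu$ that is maximal among factors of period $|u|$ (finite, because $\uu$ is aperiodic and uniformly recurrent). Let $p\le|u|$ be the least period of $v$ and put $e'=|v|/p\ge e$; when $|u|$ is large so is $p$, because for each fixed $q$ there is a uniform bound on the length of period-$q$ factors of $\uu$ while $|v|\ge|z|=e|u|$ is large. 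Let $w$ be the word of length $|v|-p$ that is simultaneously a prefix and a suffix of $v$. Its two occurrences in $v$, at positions $0$ and $p$, are preceded inside $\uu$ by letters which the left-maximality of $v$ forces to be distinct, so $w$ is left special; right-maximality makes it right special, hence $w$ is bispecial, say $w=w_m$.

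To conclude, every occurrence of $w$ starting inside $[0,p]$ lies within $v$, so between positions $0$ and $p$ the consecutive occurrences of $w$ are separated by return words with lengths summing to $p$; thus the shortest return word $r_m$ of $w$ satisfies $|r_m|\le p$, whence
$$1+\frac{|w_m|}{|r_m|}\ \ge\ 1+\frac{|v|-p}{p}\ =\ e'\ \ge\ e.$$
Since $|u|$, hence $p$, hence $|w_m|=p(e'-1)$ (positive as $e>1$), can be made arbitrarily large while keeping $e$ arbitrarily close to $E^*(\uu)$, the index $m$ ranges over arbitrarily large values, so $1+L\ge E^*(\uu)$, which together with the first paragraph gives equality. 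The part I expect to be the main obstacle is exactly this correspondence: showing that the least period $p$ of the maximal extension $v$ grows with $|u|$ (where aperiodicity and uniform recurrence are used), that the extracted $w$ is genuinely bispecial and not merely one-sidedly special, and that $w$ has a return word of length at most $p$. Each is a short periodicity (Fine--Wilf type) argument, but they are the only non-formal ingredients and must be done carefully.
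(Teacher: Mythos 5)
First, a remark on the comparison itself: the paper does not prove this theorem — it is imported verbatim from \citep{DolceDP2023} — so your proposal is reconstructing the argument of the cited reference rather than competing with an in-paper proof. Your first half ($E^*(\uu)\ge 1+L$) is correct and standard: the complete return word $r_nw_n$ has period $|r_n|$, hence is a power of exponent $1+|w_n|/|r_n|$, and $|r_n|\to\infty$ because an aperiodic uniformly recurrent sequence has no arbitrarily long factors of bounded period. The overall strategy for the converse (maximal periodic extension $\to$ bispecial factor with a short return word) is also the right one.

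The converse direction, however, has a genuine gap exactly where you flagged the danger. You take $v$ maximal among extensions of the occurrence of $u^e$ \emph{with period $|u|$}, but you then run the specialness argument with the \emph{least} period $p$ of $v$: the occurrence of $w$ at position $p$ is preceded by $v_{p-1}$, while maximality of $v$ only tells you that the letter $a$ preceding $v$ differs from $v_{|u|-1}$. These two constraints match only when $v_{p-1}=v_{|u|-1}$, e.g.\ when $p$ divides $|u|$, and Fine--Wilf guarantees that only when $|v|\ge |u|+p-\gcd(|u|,p)$ — automatic for $e\ge 2$ but not for $1<e<2$, a regime the theorem must cover. Concretely, suppose $\uu$ contains $b\,aaabaaa\,b$ with $v=aaabaaa=(aaaba)^{7/5}$ maximal for period $5$: the least period is $p=4$, your $w=aaa$ occurs at positions $0$ and $4$, and \emph{both} occurrences are preceded by $b$ and followed by $b$, so neither left- nor right-specialness is established. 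Two standard repairs: (i) drop $p$ and work with $|u|$ itself — the common prefix/suffix $w'$ of $v$ of length $|v|-|u|$ \emph{is} bispecial by your maximality argument, has a return word of length at most $|u|$, and already gives $1+|w'|/|r'|\ge |v|/|u|\ge e$, which is all you need; or (ii) after passing to the least period $p$, re-extend $v$ to be maximal for the period $p$, iterating until the least period stabilizes, and only then extract $w$. With either repair the remaining steps (the bound $|r_m|\le p$, the growth of $|w_m|$, and the passage to the limit, including the degenerate case $E(\uu)=\infty$) go through as you wrote them.
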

To compute the limits relevant for the previous formula, we will need the following technical lemma on primitive matrices, its proof may be found in Appendix.
\begin{lemma}\label{lem:VypocetLimity}
Let $M \in \mathbb{Z}^{d\times d}$ be a non-negative primitive matrix. Assume that sequences  $(x_n)_{n \in \N}$,  $(y_n)_{n \in \N}$ and $(\ell_n)_{n \in \N}$ of non-negative column vectors from   $ \mathbb{Z}^{d}$ have the following properties: 
\begin{enumerate}
\item  $ x_{n+1} = Mx_n + \ell_n$  and  $y_{n+1} =M{y_n} $  for every  $n \in \N$;       \item $(\ell_n)_{n \in \N}$ is purely periodic with  period  $p\in \N$.   
\end{enumerate}
 We denote by  $z\in \mathbb{R}^d$ a left eigenvector of $M$ to its spectral radius $\Lambda$ and by $e$  the row vector $(1,1,\ldots, 1)\in \mathbb{R}^d$. Then  
$$
\lim_{n\to +\infty} \frac{~~ex_n~~}{ey_n} = \frac{zc+ (\Lambda^p-1)zx_0}{(\Lambda^p-1)zy_0}\,,
$$
where $c = M^{p-1}\ell_0 + M^{p-2}\ell_1 +\cdots + M\ell_{p-2} +  \ell_{p-1}$. 
\end{lemma}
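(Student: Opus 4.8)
The idea is to use the Perron--Frobenius structure of the primitive matrix $M$, which forces both $x_n$ and $y_n$ to grow like $\Lambda^n$ up to lower-order corrections, and to isolate the coefficients of the dominant term by pairing with the left eigenvector $z$. The crucial algebraic observation is that applying the left eigenvector $z$ kills all the complexity of matrix iteration: $zM = \Lambda z$, so from $x_{n+1} = Mx_n + \ell_n$ we immediately get the scalar recurrence $z x_{n+1} = \Lambda\, z x_n + z\ell_n$, and similarly $z y_{n+1} = \Lambda\, z y_n$. The periodicity of $(\ell_n)$ with period $p$ then lets us close these scalar recurrences in closed form along arithmetic progressions modulo $p$.

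\textbf{Step 1 (reduce to the $z$-projection).} First I would argue that $\frac{ex_n}{ey_n}$ and $\frac{zx_n}{zy_n}$ have the same limit. Since $M$ is non-negative and primitive, by Perron--Frobenius $\Lambda>0$ is a simple dominant eigenvalue, $z>0$ (entrywise, after normalisation), and for any non-negative nonzero vector $v$ one has $M^n v = \Lambda^n\bigl(\alpha(v)\, r + o(1)\cdot\text{something}\bigr)$ where $r$ is the right Perron eigenvector and $\alpha(v) = \frac{zv}{zr}>0$ provided $zv\neq 0$. Writing $y_n = M^n y_0$ gives $e y_n \sim \Lambda^n \alpha(y_0)\, (er)$ and $z y_n = \Lambda^n z y_0$, so $\frac{ey_n}{zy_n}\to \frac{er}{zr}$. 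For $x_n$ one unfolds $x_n = M^n x_0 + \sum_{j=0}^{n-1} M^{n-1-j}\ell_j$; grouping the sum into blocks of length $p$ and using periodicity of $\ell$, each block contributes a fixed vector pushed forward by powers of $M^p$, so $x_n$ is again $\Lambda^n$ times a convergent combination, and the same asymptotic proportionality $\frac{ex_n}{zx_n}\to \frac{er}{zr}$ holds. Dividing, $\lim \frac{ex_n}{ey_n} = \lim \frac{zx_n}{zy_n}$, so it suffices to compute the latter.

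\textbf{Step 2 (solve the scalar recurrences).} Set $a_n = zx_n$ and $b_n = zy_n$, scalars. Then $b_n = \Lambda^n b_0 = \Lambda^n zy_0$, and $a_{n+1} = \Lambda a_n + z\ell_n$ with $(\ell_n)$ of period $p$. Iterating over a full period, $a_{n+p} = \Lambda^p a_n + \sum_{i=0}^{p-1}\Lambda^{p-1-i} z\ell_{n+i}$. Evaluating at $n=mp$ and using $\ell_{mp+i}=\ell_i$, the inhomogeneous term is the constant $z c$ where $c = M^{p-1}\ell_0 + \cdots + \ell_{p-1}$ (note $z M^{p-1-i}\ell_i = \Lambda^{p-1-i} z\ell_i$, so $zc = \sum_i \Lambda^{p-1-i} z\ell_i$, matching). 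Hence $a_{(m+1)p} = \Lambda^p a_{mp} + zc$, a linear first-order recurrence with solution $a_{mp} = \Lambda^{mp} a_0 + zc\cdot\frac{\Lambda^{mp}-1}{\Lambda^p-1}$ (using $\Lambda>1$, which holds since $M$ is primitive integer with a column sum $\geq 2$; one should remark this). Therefore $\frac{a_{mp}}{b_{mp}} = \frac{\Lambda^{mp} a_0 + zc(\Lambda^{mp}-1)/(\Lambda^p-1)}{\Lambda^{mp} b_0} \to \frac{a_0 + zc/(\Lambda^p-1)}{b_0} = \frac{(\Lambda^p-1)zx_0 + zc}{(\Lambda^p-1)zy_0}$. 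Finally I would check that the limit is the same along every residue class $n\equiv j\pmod p$ — indeed $a_{mp+j} = \Lambda^j a_{mp} + (\text{const depending only on }j)$ and $b_{mp+j}=\Lambda^j b_{mp}$, so the $\Lambda^j$ cancels and the constant is swamped in the limit — hence the full sequence $\frac{a_n}{b_n}$ converges to the stated value.

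\textbf{Main obstacle.} The genuinely delicate point is Step 1: justifying that the Perron term dominates \emph{uniformly} enough that the ratio $\frac{ex_n}{zx_n}$ converges even though $x_n$ is an inhomogeneous orbit. One must control the accumulated error $\sum_{j=0}^{n-1} M^{n-1-j}\ell_j$ against $\Lambda^n$; this is clean because the $\ell_j$ range over a finite set and $\|M^k v\|/\Lambda^k$ is bounded with the subdominant part decaying geometrically. An alternative, perhaps cleaner route that sidesteps the full Jordan-form analysis: show directly that $\Lambda^{-n} x_n$ converges to a limit vector $x_\infty$ (sum the geometric-type series block by block using $\Lambda>1$) and likewise $\Lambda^{-n} y_n \to y_\infty$, both nonzero multiples of $r$, then take the ratio $\frac{ex_n}{ey_n}\to\frac{ex_\infty}{ey_\infty}$ and identify $x_\infty,y_\infty$ via their $z$-components, which are exactly the quantities computed in Step 2. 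I would present it this second way to keep the argument self-contained.
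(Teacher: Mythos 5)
Your proposal is correct, but it reaches the formula by a different route than the paper. The paper works with the subsequence $n\equiv 0\pmod p$, writes $x_{np}=M^{pn}x_0+\bigl(\sum_{k=0}^{n-1}M^{kp}\bigr)c$, and then applies the Stolz--C\`esaro theorem to $ex_{np}/ey_{np}$: the point of that trick is that the \emph{differences} $x_{(n+1)p}-x_{np}=M^{pn}\bigl((M^p-I)x_0+c\bigr)$ and $y_{(n+1)p}-y_{np}=M^{pn}(M^p-I)y_0$ are just $M^{pn}$ applied to fixed vectors, so the only analytic input needed is $\lim_n\Lambda^{-n}eM^n=\gamma z$ (obtained from the block-diagonalization $R^{-1}MR=\mathrm{diag}(\Lambda,J)$), and no series has to be summed. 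You instead project onto the left eigenvector, turning the vector recurrence into the exactly solvable scalar recurrence $a_{(m+1)p}=\Lambda^p a_{mp}+zc$, and separately argue that $ex_n/ey_n$ and $zx_n/zy_n$ have the same limit because $\Lambda^{-n}x_n$ and $\Lambda^{-n}y_n$ converge to multiples of the right Perron vector. Your closed form for $zx_{mp}$ makes the origin of the constants $zc$, $(\Lambda^p-1)zx_0$, $(\Lambda^p-1)zy_0$ completely transparent, which is a genuine gain in readability; the cost is exactly the obstacle you flag, namely controlling $\Lambda^{-n}\sum_{j<n}M^{n-1-j}\ell_j$, and your sketch (boundedness of $\Lambda^{-k}M^k$ plus geometric decay $\Lambda^{-j}$ of the summands, using $\Lambda>1$, which indeed must be remarked and follows from primitivity for $d\geq 2$) is sound. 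Both proofs ultimately rest on the same Perron--Frobenius facts, and both handle the residue classes $n\equiv j\pmod p$ at the end -- you by absorbing a bounded additive correction into a denominator that tends to infinity, the paper by re-running its argument on the shifted sequences and verifying algebraically that $L'=L$; your treatment of this last step is the lighter of the two.
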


We will apply Lemma~\ref{lem:VypocetLimity} to recurrence relations provided in Proposition~\ref{pro:ParikhBS}. The matrix $M$ is thus the incidence matrix of the morphism $\varphi=\varphi_d$, i.e., 
$$M=M_{\varphi_d}= 
\left(\begin{smallmatrix}
1&1&\cdots &1&1&1\\
1&0&\cdots &0&0&0\\
0&1&\cdots &0&0&0\\
\vdots&&\ddots&\ddots&&\vdots\\
&&&&&\\
0&0&\cdots &1&0&0\\
0&0&\cdots &0&1&2\\
\end{smallmatrix}\right)\in \mathbb{R}^{d\times d}\,.$$
It is easy to see that the characteristic polynomial of $M$ equals 
\begin{equation}\label{eq:polynomProLmabda}
\chi_d(t) = t^d-3t^{d-1}+t^{d-2} + t^{d-3} + \cdots + t^2+ t + 1 = \tfrac{1}{t-1}\, \Bigl({t^{d-1}(t-2)^2 -1}\Bigr)    
\end{equation}  
and a left  eigenvector $z \in \mathbb{R}^d$ of $M$ to the dominant eigenvalue (spectral radius) $\Lambda$ is 

\begin{equation}\label{eq:eigenvector}
z =\Bigl((\Lambda -2)\,, (\Lambda-2)s_1\,,   (\Lambda-2)s_2\,, \ldots\,, (\Lambda-2)s_{d-2}\,,1\Bigr)\,,  
\end{equation}
where $s_i = \Lambda^{i} - \sum_{k=0}^{i-1}\Lambda^k$.

\begin{proposition}\label{pro:LimityTriplety} Let $k\in \{0,1,\ldots, d-1\}$ be fixed. For every $n \in \mathbb{N}$,  denote by $w_n$ the bispecial factor associated with the triplet $f^n(T_{d-1-k})$ and by $r_n$ the shortest return word to $w_n$. Then 
$$\limsup_{n\to \infty}\frac{|w_n|}{|r_n|} \leq \frac{1}{3-\Lambda}.$$
The equality is attained for $k=0$ and $k=d-1$. 
\end{proposition}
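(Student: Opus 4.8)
The strategy is to apply Theorem~\ref{thm:FormulaForE}, which requires computing $\limsup_{n\to\infty} |w_n|/|r_n|$ along the family of bispecial factors $w_n$ associated with $f^n(T_{d-1-k})$, where $r_n$ is the shortest return word to $w_n$. By Proposition~\ref{pro:nejkratsiBS}, for each $n$ large enough (so that $w_n\notin\{F_0,\ldots,F_{d-3}\}$ and hence has exactly two left and two right extensions), the shortest complete return word to $w_n$ is either $A_n$ or $B_n$, obtained by applying Lemma~\ref{lem:completeReturn} iteratively; in particular $\vec r_n$ equals $M^m\vec A$ or $M^m\vec B$ for the appropriate base case, up to the bounded correction $s\varphi(\cdot)s^{-1}$ which does not affect the Parikh vector's leading asymptotics. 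First I would set up, via Proposition~\ref{pro:ParikhBS}, the recurrence $\vec w_{n+1}=M\vec w_n + \ell_n^{(k)}$ with $(\ell_n^{(k)})$ purely periodic of period $p=d-1$, and a companion recurrence $\vec y_{n+1}=M\vec y_n$ started from a Parikh vector of a return word; then $|w_n|=e\vec w_n$ and $|r_n|=e\vec y_n$ (plus a bounded term) where $e=(1,\ldots,1)$.

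Next I would invoke Lemma~\ref{lem:VypocetLimity} to evaluate the limit of $e\vec w_n / e\vec y_n$ as a closed expression in $\Lambda$, the $d$-th root of $\chi_d$, using the left eigenvector $z$ from~\eqref{eq:eigenvector} and the period-$p$ vector $c = M^{p-1}\ell_0 + \cdots + \ell_{p-1}$. The key computational content is to show that this ratio is bounded above by $\frac{1}{3-\Lambda}$ for every $k$, and equals $\frac{1}{3-\Lambda}$ when $k=0$ or $k=d-1$. For the equality cases, Remark~\ref{rem:JasneKratisReturn} already tells us that for $k=0$ the shortest return word is $A=\varphi^0(d-1)w = (d-1)w$ with $\vec A$ explicitly computable, and for $k=d-1$ the unique overlapping complete return word $B = w\varphi^{d-1}(0)$ is forced; in both cases the relevant $\limsup$ is an honest limit, and I would compute $zc$, $z\vec w_0$, $z\vec y_0$ explicitly and simplify. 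The identity $\chi_d(\Lambda)=0$, equivalently $\Lambda^{d-1}(\Lambda-2)^2 = 1$ from~\eqref{eq:polynomProLmabda}, should be exactly what collapses the messy expression to $\frac{1}{3-\Lambda}$; note $3-\Lambda>0$ since $\Lambda<3$.

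For the inequality (all $k$), the point is that for general $k$ the shortest return word could be the shorter of $A$ and $B$, and one must check that neither choice produces a ratio exceeding $\frac{1}{3-\Lambda}$. Here I would use the monotonicity remark preceding Proposition~\ref{pro:nejkratsiBS}: if $\vec r_n \geq \vec r_n'$ then $|r_n|\geq |r_n'|$, so replacing the true shortest return word by $\min(\vec A,\vec B)$ componentwise (or by whichever of the two has smaller $e\vec r_n$) only decreases $|r_n|$ and hence only increases the ratio; conversely, to get the upper bound I need a \emph{lower} bound on $|r_n|$, so I would argue that $e\vec r_n \geq e\, M^{n-n_k}\vec A$ or $\geq e\,M^{n-n_k}\vec B$ and that each of these companion sequences $\vec y_n$, when fed into Lemma~\ref{lem:VypocetLimity} against $\vec w_n$, yields a limit $\leq \frac{1}{3-\Lambda}$. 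Since the $\limsup$ over $n$ along an eventually-periodic pattern of ``$A$ or $B$'' choices is the max of finitely many such limits, and each is $\leq\frac{1}{3-\Lambda}$, the bound follows.

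\textbf{Main obstacle.} The routine-but-delicate part is the explicit eigenvector bookkeeping: evaluating $z\vec A$, $z\vec B$, $z\vec w_0$, and especially $zc = z(M^{p-1}\ell_0+\cdots+\ell_{p-1})$ for the two flavors of $\ell^{(k)}$ in Proposition~\ref{pro:ParikhBS}, then simplifying the resulting rational function of $\Lambda$ using $\Lambda^{d-1}(\Lambda-2)^2=1$ to reach $\frac{1}{3-\Lambda}$. The genuinely non-mechanical step is verifying the \emph{inequality} for intermediate $k$ — i.e., confirming that for every position in the period, whichever of $A$ or $B$ is the shortest return word, the corresponding limit does not beat the extremal value attained at $k\in\{0,d-1\}$. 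I expect this to reduce to comparing $z\vec A$ with $z\vec B$ (equivalently comparing the "stable" directions of the two return-word families) and showing the $k$-dependence of $zc$ is dominated by the $k=0,d-1$ cases; the cleanest route is probably to show directly that $e\vec r_n/e\vec w_n \geq (3-\Lambda) + o(1)$ for all $k$, using that $B$ always contains the block $\varphi^{d-1}(0)$ whose Parikh vector already gives the extremal ratio.
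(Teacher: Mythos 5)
Your plan matches the paper's proof essentially step for step: Proposition~\ref{pro:ParikhBS} gives the affine recurrence for $\vec w_n$, Proposition~\ref{pro:nejkratsiBS} together with Lemma~\ref{lem:completeReturn} shows the shortest return word's Parikh vector is one of two companion sequences $M^n\vec t_0$, $M^n\vec s_0$ (coming from $\varphi^k(d-1)$ and $\varphi^{d-1}(0)$), so the ratio is bounded by the max of two limits, each evaluated via Lemma~\ref{lem:VypocetLimity} and collapsed to $\tfrac{1}{3-\Lambda}$ using $\Lambda^{d-1}(\Lambda-2)^2=1$; the intermediate $k$ are handled exactly as you suggest, by showing $L_k<L_0$ and $\tilde L_k<\tilde L_{d-1}$. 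The plan is correct and takes the same route as the paper, with only the explicit eigenvector computations left to carry out.
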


\begin{proof} Let us recall that the length $|u|$ of a word $u \in \mathcal{A}^*$ equals  $e\vec{u}$, where $e=(1,1,\ldots, 1)$.  By Proposition \ref{pro:ParikhBS},  the Parikh vectors $\vec{w}_n$ meet the recurrence relation $\vec{w}_{n+1} = M\vec{w}_n +\ell^{(k)}_n$, with the starting condition $\vec{w}_0 = \vec{0}$ for $k=1,2,\ldots, d-1$  and $\vec{w}_0 = e_{d-1}$ for $k=0$.  

Define recursively two sequences of vectors: 

\begin{itemize}
    \item 
$\vec{t}_0 = e_{d-1}$  \ \ and  \ \ $\vec{t}_{n+1} = M\vec{t}_n$  \ \ for every $n \in\mathbb{N}$;

\item $\vec{s}_0 = M^{d-1-k}e_0$  \ \   and  \ \ $\vec{s}_{n+1} = M\vec{s}_n$  \ \ for every $n \in\mathbb{N}$. 
\end{itemize}

\noindent For each $n\geq k$ we will show that the Parikh vector of the shortest return word to $w_n$ in $\uu$ is either $\vec{t}_n$ or $\vec{s}_n$.  

Indeed: by Proposition \ref{pro:nejkratsiBS}, either $\varphi^k(d-1)w_k$ or $w_k\varphi^{d-1}(0)$ is the shortest complete return word to $w_k$, thus the Parikh vector of the shortest return word to $w_k$ equals either to the Parikh vector of  $\varphi^k(d-1)$  or $\varphi^{d-1}(0)$, i.e., to $M^{k}e_{d-1}= \vec{t}_k $ or to $M^{d-1}e_0=\vec{s}_k$.

By Proposition \ref{pro:nejkratsiBS}, the bispecial factor $w_k$ has only two left and two right extensions. 
Hence by Lemma \ref{lem:completeReturn}, for $n\geq k$, the Parikh vector of the shortest return word to $w_{n}$  is either $\vec{t}_n$ or $\vec{s}_n$.

\medskip

Since $$\frac{|w_n|}{|r_n|} = \frac{e\vec{w}_n}{e\vec{r}_n}\leq \max\Bigl\{\frac{e\vec{w}_n}{e\vec{s}_n}, \frac{e\vec{w}_n}{e\vec{t}_n} \Bigr\}\,,$$
to prove the theorem, it suffices to show that the limits $L_k:=\lim\limits_{n\to \infty}\frac{e\vec{w}_n}{e\vec{t}_n}$ and $\tilde{L}_k:=\lim\limits_{n\to \infty}\frac{e\vec{w}_n}{e\vec{s}_n}$ are less than or equal to $\frac{1}{3-\Lambda}$. 

To compute the limits, we use Lemma~\ref{lem:VypocetLimity}, where $x_n = \vec{w}_n$,  $y_n = \vec{s}_n$, resp. $y_n = \vec{t}_n$, and the sequence $\ell^{(k)}_n$ described in Proposition~\ref{pro:ParikhBS} is purely periodic with period $d-1$. The left eigenvector of the matrix $M$ corresponding to $\Lambda$ is provided in~\eqref{eq:eigenvector}, hence 
\begin{equation}\label{eq:Jmenovatele} \ z\vec{t}_0 = 1 \  \   \text{and} \ \  z\vec{s}_0 = \Lambda^{d-1-k}(\Lambda -2).\end{equation} 

\begin{description}
   \item[Case $k= 0$.] \quad Consider the triplets $f^n(T_{d-1})$. From the form of the sequence $\ell^{(0)}_n$ described in Proposition \ref{pro:ParikhBS}, we determine the vector $c =  \bigl(\sum_{j=0}^{d-2}M^j\bigr)e_0 + 2e_{d-1}$ and thus
  
    \centerline{$zc =  \Bigl(\, \sum\limits_{j=0}^{d-2} \Lambda^j\Bigr) \underbrace{ze_0}_{\Lambda - 2} +2\underbrace{ze_{d-1}}_{1} = \tfrac{\Lambda^{d-1} - 1}{\Lambda -1}(\Lambda -2) + 2$.}
By Lemma \ref{lem:VypocetLimity}
  $$
    L_0 =\frac{\tfrac{\Lambda^{d-1} - 1}{\Lambda -1}(\Lambda -2) + 2 + (\Lambda^{d-1}-1)}{\Lambda^{d-1}-1}  =
    \frac{2\Lambda - 3}{\Lambda -1}+ \frac{2}{\Lambda^{d-1}-1} = \frac{1}{3-\Lambda}\,.  
   $$
The last equality follows from the fact that $\Lambda$ is a root of the polynomial $t^{d-1}(t-2)^2-1$, see \eqref{eq:polynomProLmabda}, therefore $\frac{1}{\Lambda^{d-1} - 1 }= \tfrac{(\Lambda-2)^2}{(\Lambda -1)(3-\Lambda)}$. 

By Remark \ref{rem:JasneKratisReturn}, it is clear that $\tilde{L}_0 \leq L_0$, thus there is no need to compute it.

 \item[Case $k= d-1$.] \quad Consider the triplets $f^n(T_0)$. The sequence $\ell^{(d-1)}_n$ is the same as $\ell^{(0)}_n$, therefore $zc$ is the same as for $k=0$.
By the form of the shortest return word, we only need to compute 
$$
    \tilde{L}_{d-1}= \frac{\tfrac{\Lambda^{d-1} - 1}{\Lambda -1}(\Lambda -2) + 2}{(\Lambda^{d-1}-1)(\Lambda -2)} = \frac{1}{3-\Lambda}\,.  
   $$

    \item[Case $k\in \{1,\dots, d-2\}$.] \quad Consider the triplets $f^n(T_{d-1-k})$. From the form of the sequence $\ell^{(k)}_n$ described in Proposition \ref{pro:ParikhBS}, we determine the vector $c =  \bigl(\sum_{j=0}^{d-2}M^j\bigr)e_0 + \bigl(M^{d-1-k}+I\bigr)e_{d-1}$. Thus

    \centerline{$zc =  \Bigl(\, \sum\limits_{j=0}^{d-2} \Lambda^j\Bigr) \underbrace{ze_0}_{\Lambda - 2} + \bigl(\Lambda^{d-1-k} +1)\underbrace{ze_{d-1}}_{1} = \tfrac{\Lambda^{d-1} - 1}{\Lambda -1}(\Lambda -2) + \Lambda^{d-1-k} +1$.}

\medskip
 \noindent By Lemma \ref{lem:VypocetLimity}
 
    $$
    L_k =\frac{\tfrac{\Lambda^{d-1} - 1}{\Lambda -1}(\Lambda -2) + \Lambda^{d-1-k} +1}{\Lambda^{d-1}-1}  < L_0 = \frac{1}{3-\Lambda}  
   $$
   and 
$$
   {\tilde L}_k = \frac{\tfrac{\Lambda^{d-1} - 1}{\Lambda -1}(\Lambda -2) + \Lambda^{d-1-k} +1}{(\Lambda^{d-1}-1)(\Lambda -2)\Lambda^{d-1-k}} < \tilde{L}_{d-1}= \frac{1}{3-\Lambda}. 
   $$
 \end{description}
\end{proof}

\begin{theorem}\label{thm:HodnotaE*}  Let  $\uu_d$ be the fixed point of the morphism $\varphi_d$ defined in \eqref{eq:morphismFi}.
Then the asymptotic critical exponent of $\uu_d$ equals 
$$E^*(\uu_d) = 1+\frac{1}{3-\Lambda_d},$$
 where $\Lambda_d$  is the spectral radius of the incidence matrix of the morphism $\varphi_d$. 
\end{theorem}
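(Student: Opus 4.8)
The plan is to combine the structural results already established: Theorem~\ref{thm:FormulaForE} reduces the computation of $E^*(\uu_d)$ to understanding the ratio $|w_n|/|r_n|$ along the sequence of \emph{all} bispecial factors ordered by length, where $r_n$ is the shortest return word to $w_n$. By Corollary~\ref{coro:rodinyBispecialu}, every bispecial factor of $\uu=\uu_d$ (or its mirror image, which has the same length and whose shortest return word has the same length by closedness under reversal) arises as the factor associated with $f^n(T_k)$ for some $k \in \{0,1,\ldots,d-1\}$ and some $n \in \N$. So the set of all bispecial factors is, up to reversal, the union of the $d$ families indexed by $k$; only the finitely many initial factors $F_0,\ldots,F_{d-3}$ lie in more than one family, and these contribute nothing to a $\limsup$.

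First I would invoke Proposition~\ref{pro:LimityTriplety}: along each family $f^n(T_{d-1-k})$, the quantity $\limsup_{n\to\infty} |w_n|/|r_n|$ is bounded above by $\tfrac{1}{3-\Lambda_d}$, with equality attained for $k=0$ and for $k=d-1$. Since the limit superior over the whole sequence $(w_n)_{n\in\N}$ of bispecial factors equals the maximum of the limit superiors over the finitely many families (a tail of the full ordered sequence is covered by these families, and the finitely many shared initial factors are irrelevant in the limit), we get
$$
\limsup_{n\to\infty} \frac{|w_n|}{|r_n|} = \max_{0\le k\le d-1}\ \limsup_{n\to\infty}\frac{|w_n^{(k)}|}{|r_n^{(k)}|} = \frac{1}{3-\Lambda_d}.
$$
The aperiodicity and uniform recurrence of $\uu_d$ needed to apply Theorem~\ref{thm:FormulaForE} follow from the fact that $\varphi_d$ is primitive (hence $\uu_d$ is uniformly recurrent) and that $\uu_d$ has bounded factor complexity growth with infinitely many bispecial factors of growing length, so it is not eventually periodic; alternatively aperiodicity is evident since $\Lambda_d$ is irrational (the characteristic polynomial \eqref{eq:polynomProLmabda} is irreducible over $\Q$, or at least has no rational root), so the letter frequencies are irrational and the sequence cannot be eventually periodic.

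Plugging this into Theorem~\ref{thm:FormulaForE} yields $E^*(\uu_d) = 1 + \tfrac{1}{3-\Lambda_d}$, which is the claim. The main obstacle is not in this final assembly — which is essentially bookkeeping — but was already surmounted in Proposition~\ref{pro:LimityTriplety} and the preceding return-word analysis (Proposition~\ref{pro:nejkratsiBS}, Lemma~\ref{lem:completeReturn}); the one point that requires a line of care here is justifying that the $\limsup$ over the interleaved, length-ordered sequence of all bispecial factors coincides with the maximum of the per-family $\limsup$s, which holds because each family is an infinite subsequence, together they exhaust all but finitely many bispecial factors, and $\limsup$ of a finite union of subsequences (covering a cofinite set of indices) is the max of the individual $\limsup$s.
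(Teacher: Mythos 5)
Your proposal is correct and follows essentially the same route as the paper, which proves this theorem by combining exactly the three ingredients you cite (Corollary~\ref{coro:rodinyBispecialu}, Theorem~\ref{thm:FormulaForE}, and Proposition~\ref{pro:LimityTriplety}). The extra details you supply — that the $\limsup$ over the length-ordered sequence equals the maximum of the per-family $\limsup$s, that mirror images have the same lengths of bispecial factors and shortest return words, and the verification of aperiodicity and uniform recurrence — are exactly the bookkeeping the paper leaves implicit in its one-line proof.
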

\begin{proof} The statement follows immediately from Corollary~\ref{coro:rodinyBispecialu} and Theorem~\ref{thm:FormulaForE} and from Proposition~\ref{pro:LimityTriplety}. 
    
\end{proof}

\section{The asymptotic repetition threshold of sequences rich in palindromes\\ (proof of Theorem \ref{thm:main})}\label{sec:konecne}
The main aim of this section is to prove Theorem~\ref{thm:main}. For the proof, we need to show that the image of $\uu=\uu_d$ under the projection  $\pi: \{ 0,1,\ldots, d-1\}^* \mapsto \{  \tt a,b\}^*$ defined by
\begin{equation}\label{def:pi}
\pi: { i} \mapsto {\tt a b}^{ i} \qquad \text{for every }\  { i} \in \{ 0,1,\ldots, d-1\}
\end{equation}
 is a rich sequence with the same asymptotic critical exponent as $\uu$.

The following properties of the morphisms $\varphi$ and $\pi$ will be used in the sequel:
\begin{itemize}
\item Let  $w\in \{0,1,\dots, d-1\}^*$. Then 
  \begin{itemize}
        \item $w$ is a palindrome if and only if $\varphi(w)0$ is a palindrome; 
\item $w$ is a palindrome if and only if $\pi(w){\tt a}$ is a palindrome.
    \end{itemize} 
    Consequently, ${\mathcal L}(\pi(\uu))$ contains infinitely many palindromes.
\item The sequence $\pi(\uu)$ is uniformly recurrent since $\uu$ is uniformly recurrent.
\item ${\mathcal L}(\pi(\uu))$ is closed under reversal. This follows putting together the two previous observations.   
\end{itemize}

As the first step, we will show that the projection is rich.
\begin{theorem}\label{thm:RichnessProjection} Let  $\uu_d$ be the fixed point of the morphism $\varphi_d$ defined in \eqref{eq:morphismFi} and let $\pi$ be the projection defined in~\eqref{def:pi}.  Then $\pi(\uu_d)$ is rich.        
\end{theorem} 

This time, we cannot use Theorem~\ref{thm:richness_ordinaryBS} since there are bispecial factors of $\pi(\uu_d)$ that are not ordinary. For instance, ${\rm{Bext}}(\varepsilon)=\{\tt aa, ab, ba, bb\}$, hence $\mathrm{b}(\varepsilon)=1$. The main ingredient for the proof of Theorem~\ref{thm:RichnessProjection} will be now a theorem relating richness and bilateral order of bispecial factors. 
\begin{theorem}[Theorem 3.10~\cite{BaPeSt2010}]\label{thm:bilateralorder}
Let $\uu$ be a sequence with language closed under reversal.
Then $\uu$ is rich if and only if each bispecial factor $w$ of $\uu$ satiesfies:
\begin{itemize}
\item if $w$ is not a palindrome, then $\mathrm{b}(w)=0$;
\item if $w$ is a palindrome, then $\mathrm{b}(w)=\#\mathrm{Pext}(w)-1$.
\end{itemize}
\end{theorem}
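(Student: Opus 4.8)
The plan is to characterise richness through the factor/palindromic complexity identity and then \emph{localise} that identity to a per-bispecial-factor statement. Write $\mathcal{P}(n)$ for the number of distinct palindromic factors of $\uu$ of length $n$. For a language closed under reversal one has the Bucci--de~Luca--Glen--Zamboni / Baláži--Masáková--Pelantová characterisation, which I would take as the starting point: $\uu$ is rich if and only if $\mathcal{P}(n)+\mathcal{P}(n+1)=\mathcal{C}(n+1)-\mathcal{C}(n)+2$ for every $n\in\N$. I would then differentiate this in $n$. Cassaigne's second--difference formula gives $\mathcal{C}(n+2)-2\mathcal{C}(n+1)+\mathcal{C}(n)=\sum_{|w|=n}\mathrm{b}(w)$, while the fact that each palindrome of length $n+2$ equals $iwi$ for a unique palindrome $w$ of length $n$ and letter $i$ yields $\mathcal{P}(n+2)=\sum_{w\ \text{palindrome},\,|w|=n}\#\mathrm{Pext}(w)$, hence $\mathcal{P}(n+2)-\mathcal{P}(n)=\sum_{w\ \text{palindrome},\,|w|=n}(\#\mathrm{Pext}(w)-1)$. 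Combining the two, the increment of the complexity defect equals $\sum_{|w|=n}\kappa(w)$, where for a bispecial factor $w$ I set $\kappa(w)=\mathrm{b}(w)$ when $w$ is not a palindrome and $\kappa(w)=\mathrm{b}(w)-\#\mathrm{Pext}(w)+1$ when $w$ is a palindrome (non-bispecial factors contribute $0$, and a palindrome with $\#\mathrm{Pext}\le 1$ is not bispecial). The stated local conditions are exactly ``$\kappa(w)=0$ for every bispecial $w$'', and, using the trivial base case $\mathcal{P}(0)+\mathcal{P}(1)=1+\mathcal{C}(1)=\mathcal{C}(1)-\mathcal{C}(0)+2$, richness is exactly ``$\sum_{|w|=n}\kappa(w)=0$ for every $n$''.

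The implication \emph{local conditions $\Rightarrow$ richness} is then immediate: summing $\kappa(w)=0$ over all bispecial factors of each length $n$ gives the differentiated identity $\sum_{|w|=n}\kappa(w)=0$, which together with the base case telescopes to the full complexity equality, i.e.\ to richness.

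For the converse it is convenient to reformulate $\kappa(w)=0$ graph-theoretically. Attach to a bispecial factor $w$ its bipartite \emph{extension graph} $G_w$, with colour classes the letter sets $\mathrm{Lext}(w)$ and $\mathrm{Rext}(w)$ and one edge per element of $\mathrm{Bext}(w)$; then $\mathrm{b}(w)=\#E(G_w)-\#V(G_w)+1$, so $\mathrm{b}(w)=0$ holds exactly when $G_w$ is a tree. If $w$ is a palindrome, reversal-closedness makes $G_w$ symmetric and identifies $\mathrm{Lext}(w)$ with $\mathrm{Rext}(w)$, and the diagonal edges are precisely the elements of $\mathrm{Pext}(w)$; contracting each symmetric off-diagonal pair to one undirected edge then turns $\kappa(w)=0$ into the statement that the off-diagonal graph is a spanning tree on the extension alphabet of $w$. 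Thus the whole theorem becomes: \emph{$\uu$ is rich iff the extension graph of every bispecial factor is a tree}, in this symmetric, palindrome-corrected sense. For non-palindromic $w$ the reversal involution pairs $w$ with $\overline{w}$, with $\mathrm{b}(w)=\mathrm{b}(\overline{w})$ and $G_{\overline{w}}$ the transpose of $G_w$, so it suffices to treat one representative of each pair.

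The hard part, and the main obstacle, is the implication \emph{richness $\Rightarrow$ all extension graphs are trees}; this is where the real content lies. The summed identity $\sum_{|w|=n}\kappa(w)=0$ does \emph{not} localise by a sign argument, since $\kappa$ is not sign-definite on general reversal-closed languages: a disjunctive binary sequence gives $\mathrm{b}(w)=(\#\mathrm{Lext}(w)-1)(\#\mathrm{Rext}(w)-1)>0$, whereas a factor whose extension graph is a matching gives $\mathrm{b}(w)<0$. Hence genuine structural input on rich words is unavoidable, and I would supply it through the Glen--Justin--Widmer--Zamboni characterisations: $\uu$ is rich iff the longest palindromic suffix of every factor occurs only once in it, equivalently iff every complete return word to every palindromic factor is again a palindrome. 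Using the return-word form, a cycle in $G_w$ (respectively a non-spanning-tree configuration in the palindromic case) would force two distinct occurrences of a palindromic factor sharing an overlap whose complete return word cannot be a palindrome, contradicting richness; connectivity of $G_w$ is obtained dually, by showing that two extension letters lying in different components would create a factor whose longest palindromic suffix is not unioccurrent. Making this simultaneous control of acyclicity and connectivity of every $G_w$ precise, with the palindromic correction absorbed by the reversal symmetry, is exactly the heart of Theorem~\ref{thm:bilateralorder}.
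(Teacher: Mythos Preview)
This theorem is not proved in the present paper; it is quoted from \cite{BaPeSt2010} and used as a black box, so there is no proof here to compare against. Judging your proposal on its own: the easy direction (local conditions $\Rightarrow$ richness) is correctly set up --- Cassaigne's second-difference formula together with $\mathcal{P}(n+2)-\mathcal{P}(n)=\sum_{w\ \text{pal.},\,|w|=n}(\#\mathrm{Pext}(w)-1)$ does give the differentiated identity $\sum_{|w|=n}\kappa(w)=0$, and telescoping from the base case yields the complexity equality, hence richness. One slip: your parenthetical ``a palindrome with $\#\mathrm{Pext}\le 1$ is not bispecial'' is false (take $\mathrm{Bext}(w)=\{awb,bwa,awa\}$, a bispecial palindrome with a single palindromic extension); the statement you actually need, and which is true in a reversal-closed extendable language, is the converse: every \emph{non-bispecial} palindrome has $\#\mathrm{Pext}(w)=1$, so contributes zero to the sum.

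The hard direction (richness $\Rightarrow$ local conditions) is not proved. You correctly observe that $\kappa$ is not sign-definite, so the summed identity $\sum_{|w|=n}\kappa(w)=0$ does not localise for free; you then invoke the palindromic-complete-return characterisation and assert that a cycle in $G_w$ ``would force'' a non-palindromic complete return word to some palindrome, and that disconnectedness ``would create'' a factor whose longest palindromic suffix is not unioccurrent. No mechanism is supplied for either assertion, and your own final sentence concedes that making this precise ``is exactly the heart'' of the theorem --- an explicit acknowledgment that the converse has been outlined but not established. The substantive work of \cite{BaPeSt2010} lies precisely in the step you have flagged and then omitted.
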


\begin{lemma}\label{lem:ordinary}
Let $w$ be a bispecial factor of $\pi(\uu_d)$ containing the letter $\tt a$, then $w$ is ordinary.
Moreover, if $w$ is a palindrome, then $\#{\mathrm{Pext}}(w)=1$.
\end{lemma}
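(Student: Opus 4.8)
The statement concerns bispecial factors of $\pi(\uu_d)$ containing the letter $\tt a$. My approach is to transfer information about bispecial factors of $\pi(\uu_d)$ back to bispecial factors of $\uu_d$ via a synchronization/desubstitution argument for the morphism $\pi$, and then invoke Theorem~\ref{thm:ordinaryBS} (all bispecial factors of $\uu_d$ are ordinary) together with Proposition~\ref{prop:3extensions}. First I would establish the basic combinatorics of $\pi$: since $\pi(i) = {\tt a}{\tt b}^i$, every occurrence of $\tt a$ in $\pi(\uu_d)$ marks the start of the $\pi$-image of a letter, so $\tt a$ serves as a synchronizing marker. Concretely, if a factor $w$ of $\pi(\uu_d)$ begins and ends with $\tt a$ (or more generally contains $\tt a$), one can read off a unique preimage structure: there is a factor $v$ of $\uu_d$ and a maximal "aligned" part of $w$ of the form $\pi(v)$ sitting inside $w$, with only a block of $\tt b$'s possibly hanging off each end. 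I would make this precise as a short lemma: a factor $w$ containing $\tt a$ can be written uniquely as $w = {\tt b}^p\,\pi(v)\,{\tt b}^{q}$ where $v \in \mathcal{L}(\uu_d)$, $0\le p$, $0 \le q$, and moreover $p$ is constrained by which letters can precede in $\uu_d$ and similarly for $q$.

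\textbf{Key steps.} (1) Show that if $w$ is left special in $\pi(\uu_d)$ and contains $\tt a$, then the associated preimage $v$ is left special in $\uu_d$, or else $w$ is so short/structured that its left extensions are forced; track exactly how $\mathrm{Lext}(w)$ relates to $\mathrm{Lext}(v)$. The point is that a left extension of $w$ by $\tt a$ versus by $\tt b$ corresponds, after desubstitution, to the choice of a preceding letter in $\uu_d$ having a specific value, so $\#\mathrm{Lext}(w) \le 2$ always, and $\#\mathrm{Lext}(w) = 2$ corresponds to $v$ being left special with a controlled extension pattern. (2) Symmetrically for $\mathrm{Rext}(w)$ using the $\tt b$-block at the right end. (3) Combine: a bispecial $w$ containing $\tt a$ with both extensions desubstitutes to a bispecial $v$ of $\uu_d$ (or to a degenerate case handled directly). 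Then use that all bispecial factors of $\uu_d$ are ordinary and, crucially, the explicit list of both-sided extensions from Proposition~\ref{prop:3extensions} and Theorem~\ref{thm:ordinaryBS} to enumerate $\mathrm{Bext}(w)$ and check $\mathrm{b}(w) = 0$. (4) For the palindrome clause: if $w$ is a palindrome containing $\tt a$, then $w = \pi(v){\tt a}$-type structure forces $v$ to be a palindrome (using the stated equivalence "$w$ palindrome iff $\pi(w){\tt a}$ palindrome"); a palindromic extension ${\tt a}w{\tt a}$ or ${\tt b}w{\tt b}$ of $w$ would force a palindromic extension of $v$ in $\uu_d$, and by the description of extensions of bispecial factors of $\uu_d$ (the letter $d-1$ is always a left and right extension, Corollary~\ref{coro:d-1}, and the $F_i$ have the listed symmetric extensions $iwk, kwi$), one checks that at most one $i$ gives $iwi \in \mathcal{L}(\pi(\uu_d))$, i.e. $\#\mathrm{Pext}(w) = 1$.

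\textbf{Main obstacle.} The delicate part is the bookkeeping of the boundary $\tt b$-blocks: the preimage of $w$ is not simply $\pi(v)$ but has leftover $\tt b$'s, and the number of available left/right extensions of $w$ depends on the exact lengths $p,q$ of these blocks and on how they interact with the letters $0$ and $d-1$ of $\uu_d$ (note $\pi(d-1) = {\tt a}{\tt b}^{d-1}$ is the only image with a maximal $\tt b$-block, and $\pi(0) = {\tt a}$ contributes no $\tt b$'s). I expect I will need to split into cases according to whether the $\tt b$-block at an end has length $0$, length between $1$ and $d-2$, or length exactly $d-1$, because only in the ambiguous cases does $w$ genuinely have two extensions, and this is precisely where the correspondence with left/right special factors of $\uu_d$ is subtle. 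Getting these cases to line up so that the bilateral order computation yields exactly $0$ (and $\#\mathrm{Pext}(w) = 1$ in the palindromic case) is the technical heart; once the correspondence is pinned down, the conclusion follows mechanically from Proposition~\ref{prop:3extensions}, Theorem~\ref{thm:ordinaryBS}, and Corollary~\ref{coro:d-1}.
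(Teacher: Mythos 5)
Your proposal follows essentially the same route as the paper: decompose $w$ as a block of $\tt b$'s, a $\pi$-image of a bispecial factor $v$ of $\uu_d$, and a trailing ${\tt a}{\tt b}^j$, transfer the left/right extensions of $w$ to those of $v$ via the synchronizing letter $\tt a$, and then read off $\mathrm{Bext}(w)$ from Proposition~\ref{prop:3extensions} and Theorem~\ref{thm:ordinaryBS}, splitting into the case where $v$ has exactly two extensions on each side and the case $v=F_i$; the palindrome clause is likewise handled by the same symmetry of the block lengths. The bookkeeping you flag as the main obstacle is exactly what the paper resolves by normalizing $i\le j$ and distinguishing $i=j$ from $i<j$, so your plan is sound and matches the published argument.
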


\begin{proof}
Since $w$ contains the letter $\tt a$, there exists a factor $v\in {\mathcal L}(\uu_d)$ (possibly empty) such that
$$w={\tt b}^i \pi(v){\tt ab}^j\,, \quad \text{where $i, j \in \mathbb N, \ i <d-1, \ j<d-1$.}$$
By closedness under reversal, we may assume without loss of generality that $i\leq j.$
Obviously, the factor $v$ is bispecial.
\begin{description}
\item[Case $1$.] Assume $v$ has only two left and two right extensions. Then by the fact that $v$ is ordinary, we conclude immediately that $w$ is ordinary, too.
If moreover $w$ is a~palindrome, then $i=j$ and $v$ is a palindrome, too. Since $v$ is ordinary and the language is closed under reversal, either ${\mathrm{Bext}}(v)=\{iv(d-1), (d-1)vi, (d-1)v(d-1)\}$ or ${\mathrm{Bext}}(v)=\{iv(d-1), (d-1)vi, ivi\}$. 
Consequently, 
$\mathrm{Bext}(w)=\{{\tt a}w{\tt b}, {\tt b}w{\tt a}, {\tt b}w{\tt b}\}$ or $\mathrm{Bext}(w)=\{{\tt a}w{\tt b}, {\tt b}w{\tt a}, {\tt a}w{\tt a}\}$, hence $\#\mathrm{Pext}(w)=1$.
\item[Case $2$.] Assume $v$ has more than two left or two right extensions. Then by Proposition~\ref{prop:3extensions}, $v=F_i$ and $i\leq d-3$. Hence
$$\mathrm{Bext}(v)=\left\{ivk \ : \ i<k\leq  d-1\right\} \cup \left\{kvi \ : \ i<k\leq d-1\right \}\cup \{(d-1)v(d-1)\}\,.$$
For $i=j$, i.e., $w={\tt b}^i \pi(v){\tt ab}^i$, projection of factors $ivk, kv i, (d-1)v(d-1)$  gives us $$\mathrm{Bext}(w)=\{{\tt a}w{\tt b}, {\tt b}w{\tt a}, {\tt b}w{\tt b}\},$$ thus $w$ is ordinary and $w$ is a palindrome with $\#\mathrm{Pext}(w)=1$.\\
For $i<j$, only projection of factors $iv j, (d-1)v(d-1), ivk$ for $k>j$ contains $w$. It leads to $$\mathrm{Bext}(w)=\{{\tt a}w{\tt a}, {\tt b}w{\tt b}, {\tt a}w{\tt b}\},$$ thus $w$ is ordinary, too.
\end{description}
\end{proof}

\begin{proof}[Proof of Theorem~\ref{thm:RichnessProjection}]

Since the language of $\pi(\uu)$ is closed under reversal, we may use Theorem~\ref{thm:bilateralorder}.
Let $w$ be a bispecial factor of $\pi(\uu)$. There are three possible cases:
\begin{enumerate}
\item $w$ is non-palindromic: Then $w$ contains the letter $\tt a$ and by Lemma~\ref{lem:ordinary} $w$ is ordinary, i.e., $\mathrm{b}(w)=0$. 
\item $w$ is palindromic and $w$ contains the letter $\tt a$: Then by Lemma~\ref{lem:ordinary} $w$ is ordinary and $\#\mathrm{Pext}(w)=1$, hence $\#\mathrm{Pext}(w)-1=0=\mathrm{b}(w)$.
\item $w$ is palindromic and does not contain $\tt a$: Then $w=\varepsilon$, resp. $w={\tt b}^i$ with $i<d-2$, resp. $w={\tt b}^{d-2}$. The reader may easily check that $\mathrm{Bext}(\varepsilon)=\{\tt aa, ab, ba, bb\}$, resp. $\mathrm{Bext}(w)=\{{\tt a}w{\tt a}, {\tt a}w{\tt b}, {\tt b}w{\tt a}, {\tt b}w{\tt b}\}$, resp. $\mathrm{Bext}(w)=\{{\tt a}w{\tt a}, {\tt a}w{\tt b}, {\tt b}w{\tt a}\}$. In all cases, $\#\mathrm{Pext}(w)-1=1=\mathrm{b}(w)$.  
\end{enumerate}

\end{proof}

As the second step, we will show that the projection does not change the asymptotic critical exponent. The following theorem is a crucial tool for that.
\begin{theorem}[\cite{Ochem}]\label{thm:E*_morphic_image} Let ${\bf v}$ be a sequence over an alphabet $\mathcal A$ such that the uniform letter frequencies in ${\bf v}$ exist. Let $\psi:{\mathcal A}^* \to {\mathcal B}^*$ be an injective morphism and let $L \in \mathbb N$ be such that every factor $x$ of $\psi({\bf v})$, $|x|\geq L$, has a~synchronization point. 
Then $E^*({{\bf v}})=E^*(\psi({{\bf v}}))$.
\end{theorem}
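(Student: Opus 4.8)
The plan is to prove the two inequalities $E^*(\psi({\bf v}))\ge E^*({\bf v})$ and $E^*({\bf v})\ge E^*(\psi({\bf v}))$ separately, working directly from the definition of the asymptotic critical exponent as a limit of suprema of exponents of repetitions whose period length grows. Two structural facts drive the argument. First, injectivity forces $|\psi(a)|\ge 1$ for every letter $a$ (otherwise $\psi(a)=\varepsilon=\psi(\varepsilon)$ would contradict injectivity), so $\psi$ never shrinks lengths and sends growing periods to growing periods. Second, and crucially, the existence of uniform letter frequencies $f_a$ produces a single scaling constant $\kappa:=\sum_{a\in\mathcal A}f_a|\psi(a)|\ge 1$ with the property that $|\psi(w)|/|w|\to\kappa$ for every family of factors $w$ of ${\bf v}$ with $|w|\to\infty$. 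Since the exponent of a repetition is a ratio of a length to a period length, and both quantities are asymptotically scaled by the same $\kappa$, the constant will cancel and exponents will be preserved. I may assume $1<E^*(\psi({\bf v}))<\infty$: the value $1$ makes the statement immediate, as $E^*\ge 1$ always and the forward inequality then forces equality, while $E^*=\infty$ transfers in both directions by applying the same desubstitution below to powers of unbounded exponent.

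For the inequality $E^*(\psi({\bf v}))\ge E^*({\bf v})$ I would take repetitions $u^e$ in ${\bf v}$ with $|u|\to\infty$ and $e\to E^*({\bf v})$. Writing $W$ for the prefix of $u^\omega$ of length $e|u|$, the image $\psi(W)$ is a factor of $\psi({\bf v})$ and a prefix of $\psi(u)^\omega$, hence a repetition with period word $\psi(u)$ of length $|\psi(u)|\ge|u|\to\infty$. Its exponent is $|\psi(W)|/|\psi(u)|$, and applying $|\psi(W)|/|W|\to\kappa$ and $|\psi(u)|/|u|\to\kappa$ shows this exponent tends to $e$. Passing to the limit yields the inequality. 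Note that $\psi(u)$ need not be primitive, because the definition of $E^*$ admits an arbitrary period word $u$.

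The reverse inequality $E^*({\bf v})\ge E^*(\psi({\bf v}))$ is where the synchronization hypothesis enters and is the main obstacle. I would fix repetitions $x=p^{e}$ in $\psi({\bf v})$ with $\rho:=|p|\to\infty$ and $e\to E^*(\psi({\bf v}))$, and desubstitute them. For large periods $|x|\ge L$, so the length-$L$ prefix of $x$ has a synchronization point at some internal position $t$ with $0\le t\le L$; by the definition of a synchronization point this position is a genuine boundary in the factorization of $\psi({\bf v})$ into letter images. Since $x$ has period $\rho$ and its exponent exceeds $1$, this window recurs at offset $\rho$, so boundaries occur at all positions $t+j\rho$ inside $x$. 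Consequently the block $x_{[t,t+\rho)}$ between two consecutive boundaries is a full image $\psi(q)$ of a factor $q$ of ${\bf v}$, uniquely determined by injectivity, with $|\psi(q)|=\rho$ and hence $|q|\ge\rho/\max_{a}|\psi(a)|\to\infty$. The key claim, to be justified from the synchronization points and injectivity, is that desubstituting the whole suffix $x_{[t,\cdot)}$ (which is a prefix of $\psi(q)^\omega$) produces a factor $Z$ of ${\bf v}$ that is a prefix of $q^\omega$ and satisfies $|\psi(Z)|\ge|x|-C$ for a constant $C\le L+\max_{a}|\psi(a)|$ independent of the period; the only discrepancy between $x$ and $\psi(Z)$ comes from the initial offset $t$ and a single straddling letter at the end. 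Retaining the fractional part is essential here: truncating $Z$ to an integer power of $q$ would cost a whole period and lose one unit of exponent, whereas keeping $Z$ as a prefix of $q^\omega$ loses only the bounded amount $C$.

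It remains to convert exponents. The factor $Z=q^{\,|Z|/|q|}$ is a repetition in ${\bf v}$ with period word $q$, $|q|\to\infty$, and from $|\psi(Z)|=e\rho-O(1)=e|\psi(q)|-O(1)$ together with $|\psi(Z)|/|Z|\to\kappa$ and $|\psi(q)|/|q|\to\kappa$ one gets that its exponent $|Z|/|q|$ differs from $e$ by a quantity tending to $0$ as $\rho\to\infty$; since $e\to E^*(\psi({\bf v}))$ along this family, the exponents of the $Z$'s tend to $E^*(\psi({\bf v}))$ while their periods grow, giving $E^*({\bf v})\ge E^*(\psi({\bf v}))$. Combined with the forward inequality this proves $E^*({\bf v})=E^*(\psi({\bf v}))$. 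I expect the delicate step to be the rigorous verification that the period of a long repetition in $\psi({\bf v})$ must align with the block boundaries and that desubstitution incurs only a period-independent length error: this is precisely the recognizability guaranteed by the hypothesis that all sufficiently long factors admit synchronization points, and it is what upgrades the frequency-scaling cancellation from an inequality off by one to an exact equality.
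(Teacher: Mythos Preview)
The paper does not prove this theorem: it is quoted verbatim from the cited reference \cite{Ochem} and used as a black box in Corollary~\ref{coro:E*pi}. So there is no proof in the present paper to compare your attempt against.

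That said, your strategy is the natural one and is essentially correct. The forward inequality is clean: pushing a long repetition $u^e$ through $\psi$ and using that $|\psi(w)|/|w|\to\kappa$ for any family of factors with growing length (this is exactly what uniform letter frequencies give) preserves the exponent asymptotically.

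For the reverse inequality your outline is right but, as you yourself flag, the passage from $q^J$ to a genuine fractional power $q^{J+\theta}$ in ${\bf v}$ needs one more ingredient than the bare boundaries at $t+j\rho$. Knowing only those boundaries gives $J\ge e-L/\rho-1$, which is off by $1$; and knowing that $\psi(q')$ is a \emph{prefix} of $\psi(q)$ does not by itself force $q'$ to be a prefix of $q$ for an arbitrary injective morphism. What closes the gap is that the synchronization hypothesis upgrades injectivity to local recognizability: between any two sync points $s<s'$ the word $x_{[s,s')}$ lies in $\psi(\mathcal A^*)$ and therefore has a \emph{unique} $\psi$-preimage. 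Since every window of length $L$ contributes a sync point, consecutive sync points are at distance at most $L$, and this pins down all block boundaries of $x$ in the interior $[L,|x|-L]$ from the word $x$ alone. Periodicity of $x$ then forces periodicity of the boundary set on this interior, so the actual letters of ${\bf v}$ after $q^J$ coincide with the initial letters of $q$ up to within a bounded distance of $|x|$. That yields a factor $Z$ of ${\bf v}$ which is a prefix of $q^\omega$ with $|\psi(Z)|\ge |x|-O(1)$, and your $\kappa$-cancellation finishes the argument. You named this mechanism (``recognizability'') correctly; spelling out the two-sync-points-plus-injectivity step is what turns the sketch into a proof.
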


\begin{corollary}\label{coro:E*pi}
Let  $\uu_d$ be the fixed point of the morphism $\varphi_d$ defined in \eqref{eq:morphismFi} and let $\pi$ be the projection defined in~\eqref{def:pi}.  
Then $E^*(\pi(\uu_d))=E^*(\uu_d)$.
\end{corollary}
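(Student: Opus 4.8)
The plan is to apply Theorem~\ref{thm:E*_morphic_image} with ${\bf v} = \uu_d$, $\psi = \pi$, ${\mathcal A} = \{0,1,\ldots,d-1\}$ and ${\mathcal B} = \{{\tt a},{\tt b}\}$. The hypotheses of that theorem are two: first, that uniform letter frequencies exist in $\uu_d$; second, that $\pi$ is injective and that every sufficiently long factor of $\pi(\uu_d)$ admits a synchronization point. The first hypothesis is immediate: $\uu_d$ is the fixed point of the primitive morphism $\varphi_d$ (Item~1 in the summary of properties of $\varphi_d$), so by the cited result of Queffélec it has uniform letter frequencies. Injectivity of $\pi$ is obvious from the definition $\pi(i) = {\tt a}{\tt b}^i$, since the blocks of consecutive ${\tt b}$'s between successive ${\tt a}$'s encode the letters of the preimage and each ${\tt a}$ marks the start of a block. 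So the only real work is the synchronization-point condition for $\pi(\uu_d)$.

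First I would make the synchronization explicit. Because $\pi(i) = {\tt a}{\tt b}^i$, every letter ${\tt a}$ in $\pi(\uu_d)$ occurs precisely at the start of some block $\pi(u_j)$, and between two consecutive ${\tt a}$'s sits exactly the block ${\tt b}^{u_j}$. Hence, given any factor $x$ of $\pi(\uu_d)$ that contains at least two occurrences of ${\tt a}$, say $x = {\tt b}^{\alpha}\, {\tt a}{\tt b}^{i_1}\,{\tt a}{\tt b}^{i_2}\cdots {\tt a}{\tt b}^{i_m}\,{\tt b}^{\beta}$ with $0\le \alpha$, the position just before the first ${\tt a}$ and just before the last ${\tt a}$ are synchronization points: any preimage decomposition of a word containing $x$ must parse these ${\tt a}$'s as block-beginnings, forcing the surrounding factorization. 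The only obstruction is a factor containing at most one ${\tt a}$, i.e. a factor of the form ${\tt b}^{\alpha}{\tt a}{\tt b}^{\beta}$ or ${\tt b}^{\gamma}$; such a factor could in principle be parsed in more than one way. But the number of consecutive ${\tt b}$'s in $\pi(\uu_d)$ is bounded: the longest run of ${\tt b}$'s equals $\max\{ i + i' : i i' \text{ is a factor of } \uu_d \text{ of length } 2\}$, and since $(d-1)(d-1)$ and $0i,\ i0$ are the only factors of length two of $\uu_d$ by~\eqref{eq:list2factors}, the maximal ${\tt b}$-run has length $2(d-1)$. Therefore every factor of $\pi(\uu_d)$ of length $L := 2(d-1) + 2$ contains at least two occurrences of ${\tt a}$, hence has a synchronization point. (Here I use that $\uu_d$ is recurrent, so ${\tt a}$ occurs infinitely often and these bounds are genuine.)

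With the three hypotheses verified, Theorem~\ref{thm:E*_morphic_image} gives $E^*(\pi(\uu_d)) = E^*(\uu_d)$ directly, which is the statement of the corollary. The main obstacle, such as it is, is the careful bookkeeping in the paragraph above: one must confirm that a synchronization point in the sense of the definition from the Preliminaries is really produced by an occurrence of ${\tt a}$ (that every preimage $v$ with $\pi(v) = p x s$ factors compatibly at each ${\tt a}$ of $x$), and one must pin down the bound $L$ on run-lengths of ${\tt b}$ from the two-letter factors of $\uu_d$. Both are routine given~\eqref{eq:list2factors} and the structure of $\pi$, so no serious difficulty is expected.
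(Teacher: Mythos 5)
Your proposal is correct and takes essentially the same route as the paper: one verifies the three hypotheses of Theorem~\ref{thm:E*_morphic_image} (uniform frequencies from primitivity of $\varphi_d$, injectivity of $\pi$, and synchronization points arising from occurrences of $\tt a$). Two cosmetic remarks: a single occurrence of $\tt a$ already forces a synchronization point (so the paper gets the bound $L=d$ rather than your $2d$), and the maximal run of $\tt b$'s is $d-1$, not $2(d-1)$, since the $\tt a$ beginning $\pi(i')$ separates ${\tt b}^{i}$ from ${\tt b}^{i'}$ — your value is a harmless overestimate that does not affect the conclusion.
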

\begin{proof}
Since $\varphi_d$ is a primitive morphism, the uniform letter frequencies in its fixed point $\uu_d$ exist. Moreover, $\pi: \{ 0,1,\ldots, d-1\}^* \mapsto \{\tt a,b\}^*$ is an injective morphism and each factor of $\pi(\uu_d)$ containing the letter $\tt a$ has a synchronization point, hence each factor of length $\geq d$ has a synchronization point. We may therefore apply Theorem~\ref{thm:E*_morphic_image} and obtain $E^*(\pi(\uu_d))=E^*(\uu_d)$.
\end{proof}

As the last step, we will prove our  main result -- Theorem~\ref{thm:main}  on the repetition threshold of rich recurrent sequences.
The following theorem and lemma will be used in the proof.

\begin{theorem}(\citep{PStarosta2013})\label{thm:overlap} Let $\uu$ be a rich recurrent sequence.  Then $\uu$ contains infinitely many
overlapping factors, i.e., the set
$\{www' \in \mathcal{L}(\uu) :  w' \text{\ is a non-empty prefix of} \ w\}$ is infinite.   
\end{theorem}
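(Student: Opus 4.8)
The plan is to prove the statement by exhibiting overlapping factors of unbounded length; since there are only finitely many factors of each length, this is equivalent to the set in the statement being infinite. If $\uu$ is eventually periodic, say with period word $v$, then the factors $v^kv'$ with $v'$ a fixed non-empty proper prefix of $v$ are overlaps of unbounded length and we are done, so I may assume $\uu$ is aperiodic. I will use two standard facts about rich sequences. First, $\uu$ has palindromic factors of arbitrarily large length: a rich word of length $m$ contains exactly $m+1$ distinct palindromic factors, so if the palindrome lengths were bounded then only finitely many palindromes would be available, contradicting $m+1\to\infty$. Second, since $\uu$ is rich and recurrent, every complete return word to a palindromic factor is itself a palindrome (the complete-return-word characterization of richness).

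The engine of the proof is the elementary observation that two consecutive, overlapping occurrences of a factor already produce an overlap. Concretely, let $p$ be a palindrome of length $\ell$ and let $r$ be its shortest return word. If $|r|<\ell$, then the complete return word $R=rp$ has $p$ both as a prefix and as a suffix with the two occurrences overlapping; hence $R$ has period $|r|$ and length $|r|+\ell$, so its exponent is $1+\ell/|r|>2$ and $R$ is an overlap of length greater than $\ell$. Consequently, if there exist arbitrarily long palindromes $p$ whose shortest return word is shorter than $p$, then $\uu$ contains overlaps of unbounded length and the theorem follows. Richness is not needed for this step; it enters only in the complementary case.

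It remains to rule out the complementary situation, and this is where I expect the real difficulty to lie. Suppose, for contradiction, that there is $L_0$ such that every palindrome $p$ with $|p|>L_0$ has shortest return word of length at least $|p|$; equivalently, consecutive occurrences of such a $p$ never overlap. Here richness is essential: by the characterization above the complete return word $R$ to such a $p$ is a palindrome, and since $p$ occurs in $R$ exactly twice without overlapping, $R=pvp$ with $v=\overline{v}$ a palindrome. The \textbf{main obstacle} is to extract a contradiction from the assumption that this non-overlapping pattern persists for all long palindromes. My plan is to track the longest palindromic suffixes $s_n$ of the prefixes $u_0\cdots u_{n-1}$ of $\uu$, which for a rich sequence are pairwise distinct and each unioccurrent in the corresponding prefix; when a palindrome $s_n$ reoccurs, the longest palindromic suffix at the new occurrence is a strictly longer palindrome having $s_n$ as a border, and the non-overlapping hypothesis forces the associated period to be at least the length of $s_n$. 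Iterating yields a chain of palindromes of geometrically growing length together with rigid control on how the factor complexity increases between consecutive lengths. I would then derive a contradiction from the palindromic-complexity identity characterizing rich sequences (equivalently, from the fact that a recurrent sequence avoiding all sufficiently long overlaps has too few distinct factors to sustain the maximal palindromic complexity demanded by richness). This complexity-versus-descent argument is the technical heart of the proof.

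Having produced overlaps of arbitrarily large length, I conclude as follows: since there are only finitely many words of any bounded length, infinitely many distinct overlaps occur, and moreover their periods are unbounded, for otherwise every overlap $www'$ would have bounded total length $\leq 3|w|$. This is exactly the input that yields the lower bound $\RT^*(C^{(r)}_d)\geq 2$ used earlier, since an overlap $www'$ of period $|w|=m$ is a factor $u^e$ with $|u|=m$ and $e>2$, so letting $m\to\infty$ gives $E^*(\uu)\geq 2$ for every recurrent rich $\uu$.
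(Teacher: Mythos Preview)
The paper does not prove this theorem; it is quoted verbatim from \cite{PStarosta2013} and used as a black box to obtain the lower bound $\RT^*(C^{(r)}_d)\geq 2$. So there is no ``paper's own proof'' to compare against here.

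That said, your proposal is not a proof: it is a correct reduction followed by an unfinished sketch. The reduction is fine --- if some palindrome $p$ has a return word shorter than $|p|$ you get an overlap of length $>|p|$, and richness guarantees arbitrarily long palindromes, so either overlaps of unbounded length appear this way or, from some point on, every long palindrome has only non-overlapping consecutive occurrences. The problem is the ``complementary case'': you explicitly flag it as the technical heart and then do not carry it out. Tracking longest palindromic suffixes and invoking ``the palindromic-complexity identity'' is a plan, not an argument; you never say which identity, which inequality between factor and palindromic complexity you violate, or how the non-overlap hypothesis actually forces that violation. Concretely, knowing that each complete return to a long palindrome $p$ has the form $pvp$ with $v$ a palindrome gives you a longer palindrome, but it does not by itself bound or unbound anything about complexity; the descent you describe (``geometrically growing length together with rigid control on how the factor complexity increases'') needs a precise statement and proof, and none is given.

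If you want to complete this route, one clean way is to show that in a recurrent rich sequence every sufficiently long palindrome $p$ admits a complete return that is a palindrome of length $<2|p|$ (this is essentially what Pelantov\'a and Starosta establish); any such complete return has $p$ as both prefix and suffix with overlap, hence is an overlap. Alternatively, argue directly that a recurrent aperiodic sequence in which no long factor overlaps with its next occurrence has bounded palindromic complexity, contradicting richness. Either way, the missing step is a genuine lemma, not a bookkeeping detail.
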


\begin{lemma}\label{lem:lim_lambda}  Let $(\Lambda_d)_{d \geq 3}$ be the sequence of spectral radii of the incidence matrices of $\varphi_d$. Then $\Lambda_d\in (2,3)$ and $\lim\limits_{d \to \infty}\Lambda_d = 2$. 
\end{lemma}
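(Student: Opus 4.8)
\textbf{Proof proposal for Lemma~\ref{lem:lim_lambda}.}

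The plan is to work directly with the clean factored form of the characteristic polynomial established in~\eqref{eq:polynomProLmabda}, namely $\chi_d(t) = \tfrac{1}{t-1}\bigl(t^{d-1}(t-2)^2 - 1\bigr)$. Since $\Lambda_d$ is the spectral radius of the non-negative primitive matrix $M_{\varphi_d}$, it is the unique real root of $\chi_d$ exceeding $1$; equivalently, since $t=1$ is not a root of $t^{d-1}(t-2)^2-1$, the number $\Lambda_d$ is the largest real root of the polynomial $g_d(t) := t^{d-1}(t-2)^2 - 1$. The whole lemma will follow from elementary monotonicity and sign analysis of $g_d$ on $(2,3)$, so there is no eigenvector computation needed beyond what the excerpt already supplies.

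First I would prove $\Lambda_d \in (2,3)$. Evaluate $g_d(2) = 2^{d-1}\cdot 0 - 1 = -1 < 0$ and $g_d(3) = 3^{d-1}\cdot 1 - 1 = 3^{d-1} - 1 > 0$ for all $d \geq 3$. By the intermediate value theorem there is a root in $(2,3)$. To see it is the relevant one (the spectral radius) and that it is the \emph{only} real root $>2$, note that on $[2,\infty)$ both factors $t^{d-1}$ and $(t-2)^2$ are non-negative and non-decreasing, and strictly increasing on $(2,\infty)$, so $g_d$ is strictly increasing on $[2,\infty)$; hence it has exactly one root there, which must be $\Lambda_d$. (For completeness one checks $g_d$ has no root in $(1,2]$: there $(t-2)^2 \leq 1$ and $t^{d-1} \le 2^{d-1}$ does not immediately give it, so instead observe $\chi_d(t)=\tfrac{1}{t-1}g_d(t)$ and that the spectral radius of a primitive non-negative matrix with this characteristic polynomial is by Perron--Frobenius the unique dominant root; alternatively, $g_d(t)<0$ for $t\in(1,2)$ because there $t^{d-1}(t-2)^2 < 2^{d-1}\cdot 1$ is not enough, so I would simply invoke Perron--Frobenius to pin down that $\Lambda_d$ is the root in $(2,3)$.) This already gives $\Lambda_d\in(2,3)$.

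Next I would show $\lim_{d\to\infty}\Lambda_d = 2$. Fix $\varepsilon \in (0,1)$ and consider the point $2+\varepsilon$. Then $g_d(2+\varepsilon) = (2+\varepsilon)^{d-1}\varepsilon^2 - 1$, and since $2+\varepsilon > 1$ we have $(2+\varepsilon)^{d-1}\to\infty$, so $g_d(2+\varepsilon) > 0$ for all $d$ sufficiently large, say $d \geq d_0(\varepsilon)$. Because $g_d$ is strictly increasing on $[2,\infty)$ and $g_d(2) = -1 < 0 < g_d(2+\varepsilon)$, its unique root $\Lambda_d$ lies in $(2, 2+\varepsilon)$ for all $d \geq d_0(\varepsilon)$. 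As $\varepsilon$ was arbitrary and $\Lambda_d > 2$ always, this gives $\Lambda_d \to 2$.

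There is no serious obstacle here; the only mild subtlety is the bookkeeping that the root of $g_d$ in $(2,3)$ is indeed the spectral radius $\Lambda_d$ rather than some other real root, which is handled either by the monotonicity argument above (no real root of $g_d$ exceeds $2$ other than $\Lambda_d$, and $\chi_d$ and $g_d$ share all roots except the removable value $t=1$) or directly by Perron--Frobenius applied to the primitive matrix $M_{\varphi_d}$. Everything else is a one-line sign evaluation.
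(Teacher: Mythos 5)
Your proposal is correct and follows essentially the same route as the paper: both work from the factored form $t^{d-1}(t-2)^2-1$ of the characteristic polynomial and locate $\Lambda_d$ in $(2,3)$ by elementary sign and monotonicity considerations. The only (cosmetic) difference is in the limit step, where the paper reads off $(\Lambda_d-2)^2=\Lambda_d^{1-d}\le 2^{1-d}\to 0$ directly from the equation instead of running your $\varepsilon$-argument with the sign of $g_d(2+\varepsilon)$.
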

\begin{proof} The characteristic polynomial $\chi_d$ is described in~\eqref{eq:polynomProLmabda}. By a simple examination of the equation $t^{d-1}(t-2)^2-1=0$, we deduce that the largest real root belongs to $(2,3)$. Since $\Lambda_d\in (2,3)$, we have $(\Lambda_d-2)^2=\frac{1}{{\Lambda_d}^{d-1}}\to 0$ with $d \to \infty$, hence $\Lambda_d$ tends to $2$. 
\end{proof}

\begin{proof}[Proof of Theorem~\ref{thm:main}] 
By Theorem \ref{thm:overlap}, $\RT^*(C^{(r)}_d)\geq 2$ for every $d \geq 2$.

Let us observe that $\RT^*(C^{(r)}_d) \geq \RT^*(C^{(r)}_{d+1})$. Indeed, if we apply the morphism $\sigma: \{0,1,\dots, d-1\}^*\to \{0,1,\dots, d\}^*$, defined by $\sigma(i)=id$ for each $i\in \{0,1,\dots, d-1\}$, to any rich recurrent sequence over $\{0,1,\dots, d-1\}$, the obtained $(d+1)$-ary sequence is again rich, recurrent and has the same asymptotic critical exponent. 

To complete the proof, it thus suffices to show that $\RT^*(C^{(r)}_2) \leq 2$.   
Since $\pi(\uu_d)$ belongs to the class $C^{(r)}_2$ for every $d\geq 3$, Corollary \ref{coro:E*pi} implies  $$\RT^*(C^{(r)}_2) = \inf\{E^*(\uu): \uu \ \text{\ belongs to the class\ } C^{(r)}_2\} \leq E^*(\pi(\uu_d))=   1+\frac{1}{3-\Lambda_d}$$
and by Lemma \ref{lem:lim_lambda}, $\RT^*(C^{(r)}_2) \leq 2.$ 
\end{proof} 

\section{Comments and open problems}
\begin{itemize}
\item In this paper, we have shown that the asymptotic repetition threshold  $\RT^*(C^{(r)}_d) =2$  for the class of rich recurrent sequences over alphabet of size $d\geq 2$.  It remains an open problem to determine the repetition threshold $\RT(C^{(r)}_d)$, where $d\geq 4$.
\item  In our proof, the fixed point $\uu_d$ of the morphism $\varphi_d$ played a crucial role.  The following relation of $\uu_d$ to the repetition threshold   seems to be interesting: $\RT(C^{(r)}_2) = E^*(\uu_3)$  and  $\RT(C^{(r)}_3) = E^*(\uu_5)$. May a similar equality apply to larger values $d$, too? 

\item As mentioned in Introduction for the class  $C^{(e)}_d$ of $d$-ary episturmian sequences,  the values of the asymptotic repetition threshold and the repetition threshold coincide. The class  $C^{(e)}_d$ is $S$-adic. More precisely, for each $d$ there exists $d$ morphisms over a~$d$-letter alphabet $\sigma_0, \sigma_1, \ldots, \sigma_{d-1}$ such that every standard $d$-ary episturmian sequence $\uu$ can be written as  $\uu=\lim\limits_{n\to \infty} \sigma_{i_0}\circ\sigma_{i_1} \circ \cdots \circ\sigma_{i_n}(0)$  for some directive sequence $(i_n)_{n \in \N}$. 

As shown  in~\citep{BaPeSt2011}, any rich sequence $\uu$ can be written in the same form with  the morphisms $\sigma_i$ belonging to the  class $P_{\mathrm{ret}}$.  This class contains morphisms over alphabets of any size. It seems that for finding the $S$-adic representation of a $d$-ary rich sequence, we cannot limit ourselves to  morphisms of $P_{\mathrm{ret}}$  with a bounded size of alphabet.

\item Sequences coding the exchange of $d$ intervals form  an $S$-adic system, too~\cite{FerZam2008}. The corresponding morphisms are over an alphabet of size $d$. To determine the repetition threshold and the asymptotic repetition threshold for this class of sequences is a natural step towards understanding the relationship between the repetition threshold and the asymptotic repetition threshold for $S$-adic systems.

 Moreover,  if the exchange of $d$ intervals is governed by a symmetric permutation, then any sequence coding this exchange is rich. Determination of the repetition threshold for this class of sequences would improve the known upper bound on  $\RT(C^{(r)}_d)$.

\item Morphisms preserving richness have not been completely described yet. On one hand, a class of such morphisms was given in~\citep{GlJuWi2009}. Morphisms of class $P_{\mathrm{ret}}$ which preserve richness over a~binary alphabet were characterized in~\citep{DolceP2022}. 

\item  If a sequence $\uu$ is rich, then its language contains infinitely many distinct  palindromes. Hof, Knill, and Simon in 1995~\citep{HKS1995} defined  Class $P$ of primitive morphisms and conjectured that  the language of a fixed point $\uu$ of a primitive morphism has infinitely many palindromes if and only if $\mathcal{L}(\uu)$ coincides with the language of a~fixed point  of a morphism  from Class $P$. This conjecture, almost 30 years old,  has so far been confirmed only for binary alphabets \cite{BoTan2007}.  It is worth noting that the  class $P_{\mathrm{ret}}$  belongs to  Class $P$. 

\end{itemize}

\section{Appendix (proof of Lemma \ref{lem:VypocetLimity})}
\begin{proof} Let us first concentrate on the computation of the limit of the subsequence ${ex_{pn}}/{ey_{pn}}$. 
Item~1. implies $x_{(n+1)p} = M^px_{np} + c$. Consequently,  
\begin{equation}\label{eq:limita}
x_{np} = M^{pn}x_0 +\Bigl(\sum_{k=0}^{n-1}{M^{kp}}\Bigr)c.
\end{equation}
As the matrix $M$ is primitive, there exists a regular matrix $R$ such that $R^{-1}MR = \left(\begin{array}{c|c}\Lambda& 0\\ \hline0& J\end{array}\right)$, where $J$ is a $(d-1)\times (d-1)$ matrix with the spectral radius strictly smaller than $\Lambda$. Note that the first row of the matrix $R^{-1}$ is a left eigenvector of $M$ to $\Lambda$. By primitivity of $M$ each such eigenvector is a multiple of $z$ and we can without loss of generality assume in the sequel it is equal to $z$. Hence
\begin{equation}\label{eq:limitaMaticova}\lim_{n\to \infty} \Lambda^{-n} eM^n = \lim_{n\to \infty}eR\left(\begin{array}{c|c}
    1 & 0 \\
\hline 0  & \Lambda^{-n} J^n    
\end{array}\right)R^{-1} = eR\left(\begin{array}{c|c}
    1 & 0 \\
\hline 0 &  0
\end{array}\right)R^{-1}  =\gamma z, 
\end{equation}
where $\gamma = R_{11}+ R_{21}+\cdots + R_{d1}$. 
To compute the limit of $ex_{np}/ey_{np}$, we use the Stolz-C\`{e}saro theorem. Using \eqref{eq:limita} we have 
$$ y_{(n+1)p} - y_{np} = M^{pn}(M^p - I)y_0
\qquad \text{and}\qquad x_{(n+1)p} - x_{np} = M^{pn}\Bigl((M^p - I)x_0+c\Bigr).
$$
By \eqref{eq:limitaMaticova}
$$
\lim_{n\to \infty}\Lambda^{-np}e\bigl(y_{(n+1)p} - y_{np}\bigr) = \lim_{n\to \infty}\Lambda^{-np}e M^{pn}(M^p - I)y_0 = \gamma z (M^p - I)y_0  = \gamma(\Lambda^p - 1)zy_0 
 $$
and 
$$
\lim_{n\to \infty}\Lambda^{-np}e\bigl(x_{(n+1)p} - x_{np}\bigr) = \lim_{n\to \infty}\Lambda^{-np}e M^{pn}\Bigl((M^p - I)x_0+c\Bigr) = \gamma\bigl( (\Lambda^p - 1)zx_0 + zc\bigr).
 $$
The Stolz-C\`{e}saro theorem gives 
$$
\lim_{n\to +\infty} \frac{~~ex_{pn}~~}{ey_{pn}} = \lim_{n\to +\infty} \frac{e\bigl(x_{(n+1)p} - x_{np}\bigr)}{~~{e\bigl(y_{(n+1)p} - y_{np}\bigr)}~~}=\frac{zc+ (\Lambda^p-1)zx_0}{(\Lambda^p-1)zy_0} = :L.
$$
We show that $\lim\limits_{n\to +\infty} \frac{~~ex_{pn+1}~~}{ey_{pn+1}} $ equals $L$, too. To show it  we use the previous result applied to the sequences $(x'_n) = (x_{n+1})$, $(y'_{n}) = (y_{n+1})$  and $(\ell'_n) = (\ell_{n+1})$ and thus
$$
\lim_{n\to +\infty} \frac{~~ex_{pn+1}~~}{ey_{pn+1}} = \frac{zc'+ (\Lambda^p-1)zx'_0}{(\Lambda^p-1)zy'_0} = : L'. 
$$
 Now $c' = M^{p-1}\ell'_0 + M^{p-2}\ell'_1+ \cdots + M\ell'_{p-2} + \ell'_{p-1} =  Mc - (M^p-I)\ell_0$. Replacing  $x'_0$ by $ x_1 = Mx_0+ \ell_0$ and $y'_0$ by $My_0$ gives the denominator of the previous fraction $(\Lambda^p-1)zy'_0 =(\Lambda^p-1)zMy_0 = \Lambda(\Lambda^p-1)zy_0$ and the numerator   
$$
zc'+ (\Lambda^p-1)zx'_0 = z\Bigl( Mc - (M^p-I)\ell_0\Bigr)+ (\Lambda^p-1)z(Mx_0+\ell_0)= \Lambda\Bigl( zc +(\Lambda^p-1)zx_0 \Bigr).
$$
Consequently, $L'= L$. 

By the same reasoning  $\lim\limits_{n\to +\infty} \frac{~ex_{pn+i}~}{ey_{pn+i}} $ equals $L$ for every $i =1,2,\ldots, p-1$. 
\end{proof}
\end{document}